\documentclass[11pt,reqno,a4paper]{amsart}

\title[A degree one Carleson operator along the paraboloid]{A degree one Carleson operator along the paraboloid}
\author[Lars Becker]{Lars Becker}
\address{Mathematical Institute, University of Bonn and Mathematics Department, Princeton University, Princeton, NJ 08544, USA}
\email{lbecker@math.princeton.edu}
\date{\today}

\usepackage[utf8]{inputenc}
\usepackage{hyperref}
\hypersetup{colorlinks=true,
    linkcolor=blue,
    filecolor=magenta,      
    urlcolor=cyan}

\usepackage{mathtools}
\usepackage{amssymb}
\usepackage{amsthm}
\usepackage{enumitem}
\usepackage{xcolor}
\usepackage{verbatim}

\theoremstyle{plain}
\newtheorem{theorem}{Theorem}
\newtheorem{lemma}{Lemma}

\newtheorem{corollary}{Corollary}
\newtheorem{proposition}{Proposition}

\theoremstyle{definition}

\theoremstyle{remark}

\newcommand{\R}{\mathbb{R}}

\DeclareMathOperator{\dens}{\operatorname{dens}}

\DeclareMathOperator{\tp}{\operatorname{top}}

\usepackage[style=trad-alpha]{biblatex}
\ExecuteBibliographyOptions{safeinputenc=true,backref=true,giveninits,useprefix=true,maxnames=5,doi=false,eprint=true,isbn=false,url=false}
\bibliography{bib.bib}

\subjclass[2020]{42B20}
\keywords{}

\begin{document}

\begin{abstract}
    We prove $L^p$ bounds, $\frac{d^2 + 4d + 2}{(d+1)^2} < p < 2(d+1)$, for maximal linear modulations of singular integrals along paraboloids with frequencies in certain subspaces of $\R^{d+1}$, for $d \geq 2$. This generalizes Carleson's theorem on convergence of Fourier series, and complements a corresponding result by Pierce and Yung with polynomial modulations without linear terms.
\end{abstract}

\maketitle

\section{Introduction}
This paper advances the program of Pierce and Yung \cite{Pierce2019} of studying maximally modulated  singular Radon transforms along paraboloids.
While their work 
focuses on certain 
polynomial modulations without linear terms and uses $TT^*$ methods, our result is the first instance in this
program for degree one polynomials featuring
symmetries that mandate the use of time-frequency analysis. 

Our main result is as follows.
An $m$-Calderón-Zygmund kernel on $\R^d$ is a function $K: \R^d \setminus \{0\} \to \mathbb{C}$ satisfying the estimates 
\begin{equation}
    \label{eq k size}
    |\partial^{\alpha} K(x)| \leq |x|^{-d- |\alpha|}\,, \quad |\alpha| \leq m\,,
\end{equation}
and the cancellation property
\begin{equation}
    \label{eq k canc}
    \int_{B(0, R) \setminus B(0, r)} K(x) \, \mathrm{d}x = 0\,, \quad 0 < r < R\,.
\end{equation}
Let $V \subset \R^{d+1}$ be a linear subspace.
We consider the maximally modulated singular integral along the paraboloid defined a priori on Schwartz functions $f$ on $\R^{d+1}$ by
\begin{equation}
    \label{eq operator}
    T_Vf(x) = \sup_{N \in V} \sup_{r < R} \left\lvert   \int_{r < |y| < R} f(x' - y, x_{d+1} - |y|^2) e^{iN \cdot (y, |y|^2)} K(y) \, \mathrm{d}y \right\rvert\,,
\end{equation}
where $x = (x', x_{d+1}) \in \R^{d}\times \R = \R^{d+1}$, and $K$ is a Calderón-Zygmund kernel. 

\begin{theorem}
    \label{main Lp}
    Let $d \geq 2$ and let $m > \frac{d}{2}$. Suppose that $V = \{0\}^d \times \R$ or $V$ is a proper subspace of $\R^{d}\times\{0\}$. Then for all $p$ with
    \begin{equation}
    \label{eq p condition}
        \frac{d^2 + 4d + 2}{(d+1)^2} < p < 2(d+1)
    \end{equation}
    there exists $C > 0$ such that for all $m$-Calderón-Zygmund kernels $K$ and all Schwartz functions $f$, we have with $T_V$ as defined in  \eqref{eq operator}
    $$
        \|T_Vf\|_{L^{p}(\R^{d+1})} \leq C\|f\|_{L^{p}(\R^{d+1})}\,.
    $$
\end{theorem}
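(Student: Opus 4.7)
The plan is to adapt the time–frequency analysis developed for the (polynomial) Carleson operator --- specifically the tree and forest framework of Lacey--Thiele and Lie --- to the curved setting of the paraboloid. The pure $TT^*$ argument used by Pierce--Yung for polynomial modulations without a linear part is unavailable here, because the linear term $N \cdot (y, |y|^2)$ with $N \in V$ endows the operator with a nontrivial modulation symmetry along the subspace $V$; this symmetry must be exploited via a phase-space decomposition.

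After fixing measurable linearizing functions $N : \R^{d+1} \to V$ and $r, R : \R^{d+1} \to (0,\infty)$, I would apply a Littlewood--Paley decomposition $K = \sum_{k \in \Z} K_k$, with $K_k$ supported in $\{|y| \sim 2^k\}$ and still satisfying cancellation, and reduce matters to a uniform bound for $\sum_k T_{k, N} f$, where each $T_{k, N}$ is the single-scale modulated convolution along the paraboloid. I would then discretize phase space into tiles $P = B_P \times \omega_P$: here $B_P$ is a parabolic box in $\R^{d+1}$ of side lengths $(2^k, \dots, 2^k, 2^{2k})$, and $\omega_P$ is an interval of size $\sim 2^{-k}$ inside a translate of $V$. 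The linearizing frequency $N(x)$ selects one tile per scale, and the selected tiles are organized into \emph{trees} (sharing a common top frequency in $V$) and then into \emph{forests} (disjoint collections of trees of comparable density).

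The tree estimate is obtained by subtracting the common top modulation inside each tree, leaving a truncated singular Radon transform along the paraboloid, which is bounded on $L^p$ for $1 < p < \infty$ by the standard theory of Stein--Wainger and Christ--Nagel--Stein--Wainger. The forest estimate is the analytic heart of the proof: one needs an almost orthogonality estimate between trees whose top frequencies are well separated inside $V$. I would establish this via a stationary phase analysis of the kernel of $T T^*$ applied to pairs of trees, in which the curvature of the paraboloid provides decay in the frequency separation (and the hypothesis $m > d/2$ enters through the resulting oscillatory integral bounds). An exceptional-set / density-decomposition argument in the Lacey--Thiele spirit then yields the $L^p$ bound in the range \eqref{eq p condition}.

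The principal difficulty I anticipate is the forest estimate, especially when $V = \{0\}^d \times \R$: here the modulation $e^{iN_{d+1} |y|^2}$ is degree two in $y$ and interacts directly with the parabolic scaling of the tiles, so the almost orthogonality between trees must be extracted from a delicate interplay between the quadratic phase, the curvature of the paraboloid and the cancellation of $K$. The shape of the range \eqref{eq p condition} --- in particular the upper endpoint $p = 2(d+1)$ and its asymmetric lower counterpart --- should reflect exactly the density-exponent balance achievable in this forest estimate.
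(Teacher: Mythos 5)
Your overall framework—discretize into tiles, organize into trees and forests, prove a tree estimate and an inter-tree almost orthogonality—is the right genre of argument, but several of the load-bearing details are misattributed, and one major structural piece is missing.

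\textbf{Missing piece: antichains.} The paper follows Fefferman's combinatorics rather than Lacey--Thiele's: the tile collection is decomposed into forests \emph{and antichains} of bounded density (Proposition \ref{prop organization}), and the antichain operators require a separate argument (Proposition \ref{prop antichain}). This is not an incidental technicality: the antichain estimate is where the paper proves and uses a square function built from the high-frequency part of the kernel, relying on the precise stationary-phase decay of $\hat\mu_0$ in each coordinate direction. It is this square function that forces $d \geq 2$ (a one-dimensional parabola has too little Fourier decay), and it is also here—not in the forest estimate—that the regularity hypothesis $m > d/2$ enters via the stationary phase bound \eqref{eq stat phase}. Your sketch never touches this part of the decomposition.

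\textbf{Wrong source for the $p$-range.} You say the tree estimate follows from ``standard theory of Stein--Wainger'' with boundedness on $L^p$, $1 < p < \infty$, and that the range \eqref{eq p condition} ``should reflect exactly the density-exponent balance achievable in this forest estimate.'' Both halves of this are off. First, plain $L^p$ boundedness of truncated singular Radon transforms is not enough: one needs a tree bound that \emph{decays in the density} $\delta$ (Proposition \ref{prop tree}), and the mechanism the paper uses to produce that decay is a sparse bound for singular Radon transforms (Lemma \ref{lem sparse}, after Oberlin) together with the thinness of $E(L)$ inside cubes of the stopping-time partition. A naively-truncated singular integral along the paraboloid is \emph{not} essentially constant at the lower truncation scale, so Fefferman's original argument for the tree estimate does not go through; the sparse bound replaces it, and this is flagged in the paper as the key new ingredient for trees. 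Second, the restricted range \eqref{eq p condition} is a direct consequence of the $L^p$-improving exponents entering that sparse bound (roughly the $(p,q)$ with $(1/p,1/q)$ in the convex hull of $(1,0),(0,1),(\tfrac{d+1}{d+2},\tfrac{d+1}{d+2})$), not of any density-exponent balance in the forest almost orthogonality. Your proposal locates the bottleneck in the wrong lemma, so the reasoning by which you expect to get the stated range would not produce it.

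\textbf{Forest estimate.} Your suggestion to run stationary phase on the $TT^*$ kernel for two trees is in the right spirit, but in the curved setting the locality required by Fefferman's argument fails for a naive integration by parts, which is why the paper introduces auxiliary oscillatory-integral estimates (Lemmas \ref{lem sep tree aux}, \ref{lem sep tree aux 2}) combined with frequency projections $\Pi_{R,N(\mathbf{T})}$ to obtain bounds that are local on $I(\mathbf{T}_1) \cap I(\mathbf{T}_2)$ and controlled by bounded auxiliary operators $W_{\mathbf{T}_i}$. Without such a replacement your ``stationary phase analysis of $TT^*$'' is a plan, not a proof. Finally, the passage from the weak-type $L^2$ estimate to the stated $L^p$ range uses a localization and Bateman-type extrapolation (as in \cite{ZK2021}), which you do not mention; this is needed precisely because the tree sparse bound yields $L^p$-improving gains only on the subcritical range.
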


Note that the singular integral along the paraboloid in \eqref{eq operator} is given by convolution with the tempered distribution $k(z) = \delta(z_{d+1} - |z|^2) K(z)$ on $\R^{d+1}$. In terms of the argument $z$, the modulation argument  $N \cdot (y, |y|^2) = N \cdot z$ in \eqref{eq operator} is a degree one polynomial. This, and its consequences for the method of proof below, is why we call $T_V$ a degree one operator.

\subsection{Motivation}
Our interest in the operator \eqref{eq operator} stems from the following result of Pierce and Yung \cite{Pierce2019}, see also \cite{Anderson2022}. They prove $L^p$ bounds for $p \in (1,\infty)$ for maximally \emph{polynomially} modulated singular integral operators along the paraboloid. More precisely, they consider the operator
\begin{equation}
    \label{eq PY operator}
    f \mapsto \sup_{P} \left| \int_{\R^d} f(x'-y, x_{d+1} - |y|^2) e^{iP(y)} K(y) \, \mathrm{d}y \right|\,,
\end{equation}
where $d \ge 2$ and $P$ ranges over a certain set of polynomials of fixed degree without linear terms, and without a monomial $c|y|^2$.   Note that this excludes exactly the monomials that are present in \eqref{eq operator}. Very recently, Beltran, Guo and Hickman \cite{beltran+2024} gave a  version of the Pierce-Yung theorem with $d = 1$, and $P$ ranging over $\{cy^3 \, : \, c \in \R\}$.

The study of maximally modulated singular integrals such as \eqref{eq operator}, \eqref{eq PY operator} has a long history, starting with Carleson's \cite{Carelson1966} proof of pointwise almost everywhere convergence of Fourier series of $L^2$ functions. His proof relies crucially on an $L^2$ to $L^{2,\infty}$ estimate for the maximally modulated Hilbert transform
\begin{equation}
    \label{eq Carleson operator}
    f \mapsto \sup_{N \in \R} \left| \int f(x - y) e^{iNy} \frac{1}{y} \, \mathrm{d}y \right|\,.
\end{equation}
Carleson's theorem was subsequently extended to $L^p$ for $p \in (1,\infty)$ by Hunt \cite{Hunt1968}, and to singular integrals in higher dimension by Sjölin \cite{Sjoelin1971}.
Other essentially different proofs of Carleson's theorem were later given by Fefferman \cite{Fefferman1973} and by Lacey and Thiele \cite{Lacey+2000}. Endpoint questions were considered in \cite{Antonov1996,LieL1,diplinio2022}.

Pierce and Yung's theorem is motivated by a variation of this theme due to Stein and Wainger \cite{Stein+2001}. They investigated maximally polynomially modulated singular integrals
\begin{equation}
    \label{eq SW operator}
    f \mapsto  \sup_{P} \left| \int f(x-y) e^{iP(y)} K(y) \, \mathrm{d}y \right|\,,
\end{equation}
where $P$ ranges over the set of all polynomials of fixed degree \emph{without linear terms}. They prove $L^p$ bounds for $p \in (1,\infty)$.  
The harder  extension to the operator \eqref{eq SW operator} with $P$ ranging over \emph{all} polynomials of fixed degree was accomplished by Lie \cite{Lie2009, Lie2020}. Zorin-Kranich subsequently gave a version of Lie's proof with very weak regularity assumptions in \cite{ZK2021}.

\subsection{Modulation symmetries}
We want to discuss the relevance of excluding linear terms in the polynomials in \eqref{eq PY operator} and \eqref{eq SW operator}.
The methods used by Stein-Wainger to bound the operator \eqref{eq SW operator}, without linear terms in the modulations, are fundamentally different from the methods employed by Carleson and Sjölin for the same operator with only linear terms. Stein and Wainger use a $TT^*$ argument exploiting almost orthogonality of contributions of different scales, and decay when the polynomial modulation is large \cite[Theorem 1]{Stein+2001}. Sjölin and Lie on the other hand use time frequency analysis.
This difference in methods is dictated by the symmetries of the operator. Carleson's operator \eqref{eq Carleson operator}, and Sjölin's higher dimensional variant, are invariant under the modulations
\begin{equation}
    \label{eq mod sym}
    f \mapsto [x \mapsto e^{iNx}f(x)]\,, \quad N \in \R^d\,.
\end{equation}
On the other hand, a quick computation shows that the operator \eqref{eq SW operator} has no symmetries under the transformations \eqref{eq mod sym}, as long as no linear terms are present in the polynomials $P$. To illustrate how this symmetry affects the proof, the reader is invited to use the modulation symmetry under \eqref{eq mod sym} to show that \cite[Theorem 1]{Stein+2001} would not be true for degree one polynomials. 

Pierce and Yung sidestep the issue of modulations symmetries via their restrictions on the linear and quadratic terms of the polynomials. This allows for a $TT^*$ argument in the same spirit as in \cite{Stein+2001}, but using more sophisticated oscillatory integral estimates. In contrast, our operators $T_V$ are invariant under linear modulations \eqref{eq mod sym} with $N \in V$, so our proof will use time frequency analysis.

\subsection{Previous results}
Pierce and Yung's bound for \eqref{eq PY operator} sparked interest in the corresponding operators with linear modulations. 

Roos \cite{Roos2019} proved a version of Sjölin's theorem for anisotropic Calderón-Zygmund operators with scaling symmetry preserving the paraboloid. Roos's result assumes enough regularity of the Calderón-Zygmund operator, and his bounds blow up if one approximates a singular integral along a paraboloid by such operators. In \cite{Becker2022}, the author improved the control of this blowup. 

Guo, Roos, Pierce and Yung proved in \cite{Guo+2017} a number of interesting related results for Hilbert transforms on planar curves. They weaken the maximal operator by first taking an $L^p$ norm in one of the variables, before taking the supremum in the modulation parameter. Most relevant to us is the following special case of their Theorem 1.2:
\begin{equation}
    \label{eq GPRY operator}
    \int \sup_{N \in \R} \int \left| \int f(x_1 - y, x_2 - y^2) e^{iNy^2} \frac{\mathrm{d}y}{y} \right|^p \, \mathrm{d}x_1 \, \mathrm{d}x_2 \le C \|f\|_p^p\,,
\end{equation}
where $p \in (1, \infty)$.
They also noted that for $p=2$, \eqref{eq GPRY operator} holds with the order of $\mathrm{d}x_1\, \mathrm{d}x_2$ reversed, and also with modulations $e^{iNy}$, by a combination of Plancherel and Lie's polynomial Carleson theorem.

Finally, Ramos \cite{Ramos2021} proved bounds for maximal modulations of the one dimensional Fourier multipliers obtained by restricting the multiplier of the Hilbert transform along the parabola to a line, uniformly in the choice of line.

Our Theorem \ref{main Lp} is the first $L^p$ estimate for maximal linear modulations of a singular integral operator along a submanifold, with - differently from \eqref{eq GPRY operator} - the supremum fully inside the $L^p$ norm.  We cannot directly compare our result to \cite{Guo+2017}, because they consider singular integrals along planar curves, while Theorem \ref{main Lp} assumes that the dimension of the paraboloid is at least two.
However, Theorem \ref{main Lp} implies a stronger version of the hypothetical generalization of \eqref{eq GPRY operator} to paraboloids of dimension $\ge 2$ in the range of $p$ given by \eqref{eq p condition}. We stress that our proof does not apply to the one dimensional parabola, so we do not recover \eqref{eq GPRY operator} itself. However, our Theorem \ref{main Lp} does recover, via projection and limiting arguments,  Sjölin's theorem \cite{Sjoelin1971} in the full range of exponents $p \in (1,\infty)$. In particular, it implies the Carleson-Hunt theorem.

\subsection{Overview of the proof of Theorem \ref{main Lp}}
Our proof is an adaptation of Fefferman's \cite{Fefferman1973} proof of Carleson's theorem to the setting of singular integrals along paraboloids. In some parts we also follow the presentation in \cite{ZK2021}. We establish variants of Fefferman's key time-frequency analysis estimates in this more singular setting. 

The proof starts with a discretization of the operator, in Section \ref{sec discretization}, and a combinatorial decomposition into so called forest operators and antichain operators in Section \ref{sec organization} (see Sections \ref{sec discretization} and \ref{sec outline} for the definitions of these operators). These two steps are straightforward variation of the corresponding steps in Fefferman's proof. However, differently from Fefferman, we test the operator with the indicator function of a set $F$ and use a modified decomposition adapted to $F$. This allows us to directly prove weak type $L^2$ bounds.

After the decomposition of the operator, the argument proceeds by proving bounds for antichain operators in Section \ref{sec antichain}, for so called tree operators in Section \ref{sec tree} and finally for forest operators in Section \ref{sec forest}, see Propositions \ref{prop antichain}, \ref{prop forest} and \ref{prop tree}.
Each of these three steps needs new ingredients for singular integrals supported on submanifolds.

\subsubsection{Antichains: A square function argument}
To control the antichain operators in Section \ref{sec antichain}, we decompose the kernel $k$ with singular support on the paraboloid into a smoothened kernel, which is no longer supported just on a submanifold, and a remainder. The versions of the operator with smoothened kernel are estimated using the argument of Fefferman (see Lemma 1 and Lemma 2 in \cite{Fefferman1973}). The contribution of the remainder is estimated by a square function. This square function is closely related to a classical (unmodulated) square function used in \cite{Stein+1978} to prove boundedness of the maximal average along the paraboloid. In our square function, there is an additional summation over modulations  compared to the one in \cite{Stein+1978}. To bound this larger square function, we rely on the precise decay rate in each direction of the Fourier transform of the kernel supported on the paraboloid, coming from stationary phase. In addition, we use that the Fourier transform of the remainder vanishes near a certain subspace. 

\subsubsection{Trees: Sparse bounds}
In Section \ref{sec tree} we estimate the tree operators. These are pieces of the operator which can be modeled by truncated singular integral operators. 
Fefferman's argument for this step (see Lemma 3 in \cite{Fefferman1973}) relies on the fact that the  convolution of a function with a truncated singular integral kernel is essentially constant at the scale of the lower truncation parameter. This makes the integral of said convolution over a set which is thin at this scale small. In contrast, truncated singular integrals along paraboloids are not essentially constant at the lower truncation scale. Our new ingredient to solve this issue is a Sobolev smoothing estimate for truncated singular integrals along paraboloids. On a technical level, we use certain sparse bounds for singular Radon transforms, see Lemma \ref{lem sparse}, due to Oberlin \cite{Oberlin2019}, see also \cite{lacey2018sparse}. The arguments in Section \ref{sec tree} are formulated for a general class of singular Radon transforms, similar to the setting of \cite{Duoandikoatxea1986}. 

\subsubsection{Forests: Square functions and oscillatory integrals}
In Section \ref{sec forest} we follow Fefferman's argument (Lemma 4 and Lemma 5 in \cite{Fefferman1973}) to prove almost orthogonality estimates between tree operators, and combine the estimates for tree operators to an estimate for forest operators.
We prove that tree operators essentially only act on frequencies close to a central frequency associated to the tree, leading to almost orthogonality for trees with sufficiently separated central frequencies. This requires more work than in \cite{Fefferman1973}, because of the singular support of the kernels on the paraboloid. However, it is within the scope of the square function arguments in \cite{Stein+1978}, \cite{Duoandikoatxea1986}, used there to bound maximal averages and maximally truncated singular integrals along paraboloids. Fefferman's argument requires certain upper bounds to be local, in the sense that they only depend on the values of the functions involved on certain sets associated to the trees. 
To ensure this locality we further use estimates for certain oscillatory integrals along paraboloids, see Section \ref{sub sec aux sep tree}. They replace easier partial integration arguments in \cite{Fefferman1973}.

\medskip
Finally, we deduce $L^p$ bounds in Section \ref{sec Lp}, using interpolation and a localization argument as in \cite{ZK2021}.

\subsection{Limitations and further questions}
The limitations of our methods are still dictated by the symmetries of the operator. The restrictions on $V$ in Theorem \ref{main Lp} ensure that the operator $T_V$ has only linear modulation symmetries. If we had $\R^d \times \{0\} \subset V$, then $T_V$ would be invariant under the transformations
\begin{equation}
    \label{eq nonl mod}
    f \mapsto [(x', x_{d+1}) \mapsto e^{iN(|x'|^2 + x_{d+1})}f(x', x_{d+1})]\,, \quad N \in \R\,.
\end{equation}
Time frequency analysis in its current form seems to be unable to handle such additional symmetries. For example, with our setup Proposition \ref{prop antichain} fails in the presence of the symmetry \eqref{eq nonl mod}, because the definition \eqref{eq densdef} of density used therein is not invariant under \eqref{eq nonl mod}.

Note that for the modulation subspace $V$ to be compatible with the anisotropic dilation symmetry of the paraboloid, it can only be the vertical subspace $\{0\}^d \times \R$, a subspace of $\R^d \times \{0\}$, or a sum of two such spaces. Then the only remaining case (up to rotation around the vertical axis) that could conceivably be within reach of current methods is $V = \R^{d-1} \times \{0\} \times \R$. Our argument does not handle this $V$, the particular point of failure is the square function argument in Subsection \ref{sub sec high antichain}. However, we know of no fundamental obstruction. It is tempting to ask whether the results of this paper can be extended to that case. 

The restriction to paraboloids of dimension two or higher in Theorem \ref{main Lp} is needed in the square function argument in Subsection \eqref{sub sec high antichain}. For the parabola this square function argument fails, because the Fourier transform of a measure on the parabola has too little decay. 

The range \eqref{eq p condition} of $p$ in Theorem \ref{main Lp}, which one might conjecture should really be $(1,\infty)$, is a consequence of restrictions on exponents in the sparse bounds in Lemma \ref{lem sparse}. These restrictions are related to the $L^p$-improving range for averages along paraboloids.
It is an interesting question whether the range of $p$ can be improved by other methods. 
\\

\noindent \textit{Acknowledgement.} The author thanks Christoph Thiele and Rajula Srivastava for various helpful discussions about this paper, and Yifan Zhang for comments on an earlier version. The author was supported by the Collaborative Research Center 1060 funded by the Deutsche
Forschungsgemeinschaft (DFG, German Research Foundation) and the Hausdorff Center for
Mathematics, funded by the DFG under Germany’s Excellence Strategy - GZ 2047/1, ProjectID 390685813.

\section{Reduction to a discretized operator}
\label{sec discretization}

We may assume that $V = \{0\}^d \times \R$ or $V = \R^{d-1} \times \{0\}^2$. 
Define anisotropic dilations $\delta_s(x', x_{d+1}) := (2^sx', 2^{2s}x_{d+1})$.
Define the dyadic cubes of scale $0$ to be
$$
    \mathbf{D}_0 := \{k + [0,1)^{d+1} \, : \, k \in \mathbb{Z}^{d+1}\}\,,
$$
and the dyadic cubes of scale $s$ to be
$
    \mathbf{D}_s := \delta_s(\mathbf{D}_0)\,.
$
The collection of all dyadic cubes is denoted $\mathbf{D} = \cup_{s \in \mathbb{Z}} \mathbf{D}_s$. Given a dyadic cube $I \in \mathbf{D}_s$, we denote by $s(I) = s$ its scale. 

We define dyadic frequency cubes of scale $0$ as 
$$
    \mathbf{\Omega}_0 := \{k + \{0\}^d \times [0,1)\, : \, k \in \{0\}^d \times \mathbb{Z}\}
$$
if $V = \{0\}^d \times \R$ and as
$$
    \mathbf{\Omega}_0 := \{k + [0,1)^{d-1} \times \{0\}^2 \, : \, k \in \mathbb{Z}^{d-1} \times \{0\}^2\}\,,
$$
if $V = \R^{d-1} \times \{0\}^2$. The dyadic frequency cubes of scale $s$ are $\mathbf{\Omega}_s := \delta_{-s}(\mathbf{\Omega}_0)$ and the collection of all dyadic frequency cubes is defined by $\mathbf{\Omega} := \bigcup_{s \in \mathbb{Z}} \mathbf{\Omega}_s$. A tile is a pair $p = (I, \omega)$, where $I \in \mathbf{D}_s$ and $\omega \in \mathbf{\Omega}_s$ for some $s = s(p)$ called the scale of $p$.
The collection of all tiles is denoted by
$$
    \mathbf{P} := \bigcup_{s \in \mathbb{Z}} \{(I, \omega) \, : \, I \in \mathbf{D}_s,\,\omega \in \mathbf{\Omega}_s\}\,.
$$

We decompose the kernel $K$ into pieces localized in dyadic annuli. Fix a smooth function $\eta$ supported in $[1/8, 1/3]$ such that 
$$
    \sum_{s \in \mathbb{Z}} \eta(2^{-s}t) = 1\,,\quad t \in (0, \infty)\,,
$$
and define $K_s(x) = K(x) \eta(2^{-s}|x|)$.
Then \eqref{eq k size} and \eqref{eq k canc} imply that there exists a constant $C = C(\eta)$ with
\begin{equation}
    \label{eq ks size}
    |\partial^\alpha K_s(x)| \leq C 2^{-s(d + |\alpha|)}\,, \quad |\alpha| \leq m\,,\, s\in \mathbb{Z}\,,
\end{equation}
and that 
\begin{equation}
    \label{eq ks canc}
    \int K_s(x) \, \mathrm{d}x = 0\,, \quad s \in \mathbb{Z}\,.
\end{equation}
It suffices to estimate the operator 
\begin{equation}
    \label{eq dis op}
    \sup_{N \in V} \sup_{\underline s < \overline s}\left\lvert  \sum_{s = \underline s}^{\overline s} \int f(x' - y, x_{d+1} - |y|^2) e^{iN \cdot (y, |y|^2)} K_s(y) \, \mathrm{d}y \right\rvert\,,
\end{equation}
since the difference to $T_V$ is controlled by the maximal average along the paraboloid, by \eqref{eq k size}.
Since the sum of integrals in \eqref{eq dis op} is continuous in $N$, we may restrict the supremum in $N$ to a countable dense subset of $V$. By monotone convergence, we may further restrict both suprema  to finite subsets of $V$ and $\mathbb{Z} \times \mathbb{Z}$. Choosing maximizers for each $x$, it suffices to bound the operator
\begin{equation}   
    \label{eq lin}
    \sum_{s = \underline s(x)}^{\overline{s}(x)} \int f(x' - y, x_{d+1} - |y|^2) e^{-iN(x) \cdot (y, |y|^2)} K_s(y) \, \mathrm{d}y
\end{equation}
uniformly over all measurable functions $N: \R^{d+1} \to V, \underline{s}: \R^{d+1} \to \mathbb{Z}$ and $\overline{s}: \R^{d+1} \to \mathbb{Z}$ with finite range. We fix such functions $N, \underline s, \overline s$.

For a tile $p \in \mathbf{P}$, we define 
$$
    E(p) = \{x \in I(p) \, : \, N(x) \in \omega(p), \, \underline{s}(x) \leq s(p) \leq \overline{s}(x)\}
$$
and the operator associated to the tile $p$  
$$
    T_p f(x) = \mathbf{1}_{E(p)} \int f(x' - y, x_{d+1} - |y|^2) e^{-iN(x)\cdot(y,  |y|^2)} K_{s(p)}(y) \, \mathrm{d}y\,.
$$
For a subset $\mathbf{C} \subset \mathbf{P}$ we write $T_\mathbf{C} = \sum_{p \in \mathbf{C}} T_p$.
The operator \eqref{eq lin} is then simply $T_\mathbf{P}$.

Finally, we remove some tiles with unfavourable properties, using an argument due to Fefferman \cite{Fefferman1973}.
A tile is called admissible if $3\omega(p) \subset \hat{\hat{\omega}}(p)$, where $\hat{\omega}$ denotes the unique frequency cube of scale $s(\omega) - 1$ containing $\omega$. Then it suffices to show the estimate $\|T_{\mathbf{P}_\text{ad}}f\|_{L^{p}} \leq C \|f\|_{L^{p}}$, where $\mathbf{P}_\text{ad}$ is the set of admissible tiles, 
as follows from an averaging argument analogous to the one in Section 5 of \cite{Fefferman1973}. From now on by tile we always mean an admissible tile.

\section{Outline of the proof of weak type \texorpdfstring{$L^2$}{L2}-bounds for \texorpdfstring{\eqref{eq lin}}{(8)}}
\label{sec outline}

By duality and the reductions in the previous section, it suffices to show that there exists $C> 0$ such that for each compact set $F \subset \R^{d+1}$, there exists a set $\tilde F \subset F$ with $|\tilde F| \leq \frac{1}{2}|F|$ and
\begin{equation}
    \label{eq goal}
    \| \mathbf{1}_{F \setminus \tilde F} T_{\mathbf{P}_\text{ad}} \|_{2 \to 2} \leq C \,.
\end{equation}
We fix $F$. The operators $\mathbf{1}_F T_p$ do not change upon replacing $E(p)$ by $E(p) \cap F$, so we will assume from now on that $E(p) \subset F$ for all tiles $p$. Note that since $F$ is compact and $\underline s, \overline s$ and $N$ have finite range, only finitely many operators $T_p$ are not zero.

We now make some definitions to state a decomposition for the operator on the left hand side of \eqref{eq goal}. 
Denote by $\dim_h V$ the homogeneous dimension of $V$ with respect to the dilations $\delta_s$, that is $$\dim_h(\R^{d-1} \times \{0\}^2) = d-1\,, \qquad  \qquad \dim_h (\{0\}^d \times \R) = 2\,,$$
and define the convex cylinder associated to a collection $\mathbf{C}$ of tiles by
$$
    C(\mathbf{C}) = \{p \in \mathbf{P}_{\text{ad}} \, : \, \exists p', p'' \in \mathbf{C}, I(p') \subset I(p) \subset I(p'')\}\,.
$$
Then the density of a collection $\mathbf{C}$ of tiles is defined as 
\begin{equation}
    \label{eq densdef}
    \dens(\mathbf{C}) = \sup_{p \in \mathbf{C}} \sup_{\substack{\lambda \geq 1\\\text{odd}}} \sup_{\substack{p' \in C(\mathbf{C})\,:\, I(p) \subset I(p')\\ \lambda \omega(p') \subset \lambda \omega(p)}}  \lambda^{-\dim_h V} \frac{| E(\lambda, p')|}{|I(p')|}\,,
\end{equation}
where we use the notation $\lambda \omega = c(\omega) + \delta_{\log_2 \lambda}(\omega - c(\omega))$ where $c(\omega)$ is the center of $\omega$ and
$$
    E(\lambda, p') = \{x \in I(p)\cap F \, : \, N(x) \in \lambda \omega(p)\}\,.
$$

We define a partial order on the set of tiles by 
$$(I, \omega) \leq (I', \omega') \iff I \subset I' \ \text{and} \ \omega' \subset \omega\,.$$
An antichain is a set of tiles that are pairwise not comparable with respect to this order. A set of tiles $\mathbf{C}$ is called convex, if $p, p' \in \mathbf{C}$ and $p \leq p'' \leq p'$ implies $p'' \in \mathbf{C}$. A tree is a convex collection $\mathbf{T}$ of tiles together with an upper bound $\tp(\mathbf{T})$, i.e. a tile $m$ such that for all $p \in \mathbf{T}$ we have $p \leq m$. We denote $\omega(\mathbf{T}) = \omega(\tp(\mathbf{T}))$ and  $I(\mathbf{T}) = I(\tp(\mathbf{T}))$. A tree is called normal if $3I(p) \subset I(\mathbf{T})$ for all $p \in \mathbf{T}$. 

A pair of trees $\mathbf{T}_1, \mathbf{T}_2$ is called $\Delta$-separated, if for $\{i,j\} = \{1,2\}$ and each tile $p_i \in \mathbf{T}_i$ with $I(p_i) \subset I(\mathbf{T}_j)$ we have $\Delta \omega(p_i) \cap \omega(\mathbf{T}_j) = \emptyset$. 
An $n$-forest is a collection of pairwise $2^{10dn}$-separated, normal trees of density at most $2^{-n}$ that satisfy the overlap estimate 
\begin{equation}
    \label{eq forest overlap}
    \sum_{\mathbf{T} \in \mathbf{F}} \mathbf{1}_{I(\mathbf{T})} \leq 2^n \log(n+2)\,.
\end{equation}

\begin{proposition}
    \label{prop organization}
    There exists a constant $C = C(d)$ and an exceptional set $\tilde F$ with $|\tilde F| \leq  |F|/2$, such that the set 
    $$
        \mathbf{P}_{F\setminus \tilde F} := \{p \in \mathbf{P}_\mathrm{ad} \, : \, I(p) \cap (F \setminus \tilde F) \neq \emptyset\}
    $$ 
    can be decomposed as a disjoint union 
    $$
        \mathbf{P}_{F\setminus \tilde F} = \bigcup_{n \geq 0} \left(\bigcup_{l=1}^{C(n+1)^2} \mathbf{F}_{n,l} \cup \bigcup_{l=1}^{C(n+1)^3} \mathbf{A}_{n,l}\right)\,,
    $$
    where each $\mathbf{F}_{n,l}$ is an $n$-forest and each $\mathbf{A}_{n,l}$ is an antichain of density at most $2^{-n}$.
\end{proposition}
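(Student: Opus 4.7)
I would follow Fefferman's combinatorial organization from \cite{Fefferman1973}, in the form refined by \cite{ZK2021}, with the paraboloid-dependence of the problem absorbed into the density definition \eqref{eq densdef} through the homogeneous dimension $\dim_h V$.

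The first step is to define $\tilde F$ as the anisotropic Hardy--Littlewood super-level set $\{M_\delta \mathbf{1}_F > c\}$, where $M_\delta$ is the dyadic maximal function over the cubes $\mathbf{D}$ and $c = c(d)$ is chosen so that the weak-$(1,1)$ bound gives $|\tilde F| \leq |F|/2$. This forces $|I \cap F|/|I| \lesssim c$ for every dyadic $I$ meeting $F \setminus \tilde F$, and hence the single-tile density
\[
    \mathfrak{d}(p) := \sup_{\substack{\lambda \geq 1\\\text{odd}}} \sup_{\substack{p' \in C(\mathbf{P}_{F\setminus \tilde F})\\ I(p) \subset I(p'),\, \lambda\omega(p') \subset \lambda\omega(p)}} \lambda^{-\dim_h V} \frac{|E(\lambda,p')|}{|I(p')|}
\]
is uniformly bounded for $p \in \mathbf{P}_{F\setminus \tilde F}$. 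Since $\dens(\mathbf{C}) \leq \sup_{p \in \mathbf{C}} \mathfrak{d}(p)$ for convex $\mathbf{C}$, stratifying $\mathbf{P}_n := \{p : \mathfrak{d}(p) \in (2^{-n-1}, 2^{-n}]\}$ yields $\mathbf{P}_{F\setminus \tilde F} = \bigsqcup_{n \geq 0} \mathbf{P}_n$ with every convex subcollection of $\mathbf{P}_n$ of density $\leq 2^{-n}$.

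Within each stratum $\mathbf{P}_n$ I would run the Fefferman greedy selection: pick a surviving tile $m \in \mathbf{P}_n$ of maximal scale that is an upper bound for some yet-unassigned tile in $\mathbf{P}_n$, form the tree with top $m$ consisting of all surviving $p \leq m$ with $3I(p) \subset I(m)$ (giving normality), put the $p \leq m$ failing this condition into an auxiliary antichain, and iterate until $\mathbf{P}_n$ is exhausted. The output is a family $\mathcal{T}_n$ of normal trees of density $\leq 2^{-n}$ together with $O(1)$ boundary antichains per $n$. I would then repartition $\mathcal{T}_n$ into forests by a two-layer pigeonholing: first colour tree tops so that any two tops in the same colour are either spatially disjoint or have $2^{10 d n}$-separated frequency cubes; then within each colour peel off sub-families whose $\sum \mathbf{1}_{I(\mathbf{T})}$ respects the overlap bound \eqref{eq forest overlap}. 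Each pass should need $O(n+1)$ colours, yielding the $C(n+1)^2$ forests. Trees that do not fit into a forest are dismantled and redistributed among antichains; enforcing incomparability costs an extra factor of $n+1$ and produces the $C(n+1)^3$ antichains.

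The main obstacle is the counting lemma underlying both pigeonhole passes: at any $x \in F \setminus \tilde F$, the number $K$ of trees $\mathbf{T} \in \mathcal{T}_n$ with $x \in I(\mathbf{T})$ whose tops have pairwise $2^{10dn}$-close frequencies should be $O(2^n(n+1))$. This is where \eqref{eq densdef} bites: taking $\lambda \asymp 2^{10dn}$ around one candidate top and summing the contributions of the other $K-1$ tops against a common ambient $p'$ forces the right-hand side of \eqref{eq densdef} to exceed $2^{-n}$ unless $K$ is controlled by $2^n$ times the volume factor $\lambda^{\dim_h V}$. This is the one place the paraboloid geometry, encoded in $\dim_h V$, enters the combinatorics; the remainder of the argument follows \cite{Fefferman1973} with only bookkeeping changes.
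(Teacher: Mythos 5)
Your overall plan---stratify by density, prune to obtain normal, separated trees, pigeonhole by overlap, and account for the leftovers as antichains---is the right skeleton, and matches Fefferman's scheme that the paper follows. But two essential mechanisms are missing, and because of them the final counts $(n+1)^2$, $(n+1)^3$ and the overlap bound \eqref{eq forest overlap} do not come out.

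First, the overlap bound $\sum_{\mathbf{T}\in\mathbf{F}}\mathbf{1}_{I(\mathbf{T})}\leq 2^n\log(n+2)$ is a \emph{pointwise} statement, and it cannot be read off from the density definition \eqref{eq densdef} alone as you suggest. Disjointness of the sets $\overline E(p)$ over an antichain $\mathbf{M}_{n,k}$ of tiles with $|\overline E(p)|/|I(p)|\geq 2^{-n-1}$ only gives an averaged (i.e.\ $L^1$) overlap bound of order $2^n$. Upgrading this to a pointwise bound requires a John--Nirenberg argument, applied cube-by-cube, together with removal of the super-level set where the overlap exceeds $1000\cdot 2^n\log(n+2)$; this is Lemma~\ref{lem ex} in the paper, and it is precisely where the $\log(n+2)$ in \eqref{eq forest overlap} comes from. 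Your argument ``taking $\lambda\asymp 2^{10dn}$ and summing against a common ambient $p'$'' does not work: after clearing $\lambda^{-\dim_h V}$ from \eqref{eq densdef} you would only get $K\lesssim 2^n\cdot 2^{10dn\dim_h V}$, which is enormously larger than $2^n\log(n+2)$.

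Second, you have a single-parameter stratification in $n$, whereas the paper's proof rests on a two-parameter $(n,k)$ stratification which serves two distinct purposes. The Whitney-type decomposition of $\mathbb{R}^{d+1}$ into $\mathbf{D}_k(F)=\{Q \text{ maximal}: |Q\cap F|/|Q|\geq 2^{-k-1}\}$ is what makes the John--Nirenberg exceptional set small: each piece of the exceptional set is a union of cubes $I(p)$ with $|I(p)\cap F|/|I(p)|\leq 2^{-k}$, so its $F$-measure gains a factor $2^{-k}$, and the sum over $Q\in\mathbf{D}_k(F)$ of $|Q\cap F|$ recovers $|F|$ with no loss. Your $\tilde F=\{M_\delta\mathbf{1}_F>c\}$ with a universal $c$ gives you $|\tilde F|\leq|F|/2$ but does not give this gain, so the JN-based exceptional set sum diverges. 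Moreover, the counts in Proposition~\ref{prop organization} literally come from summing over $k\leq n$: Lemma~\ref{lem nk dec} produces $O(n+1)$ forests and $O_d((n+1)^2)$ antichains per pair $(n,k)$, and summing $k$ from $0$ to $n$ gives the stated $(n+1)^2$ and $(n+1)^3$. The extra factor $O(n+1)$ inside each $(n,k)$-stratum is obtained by a further pigeonholing into collections $\mathbf{C}_j=\{p:2^{j-1}\leq|\mathbf{B}(p)|<2^j\}$, $1\leq j\leq 2n+10$, where $\mathbf{B}(p)$ counts the number of selected maximal tiles sitting above $p$ --- a different mechanism from the colouring you sketch. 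Without these two stratifications there is no route from your ``$O(1)$ boundary antichains'' and ``$O(n+1)$ colours per pass'' to the required powers of $(n+1)$.
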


Estimate  \eqref{eq goal}, and therefore weak type $L^2$ boundedness of the operator defined in \eqref{eq operator} then follows from the following estimates for antichain and forest operators. 

\begin{proposition}
    \label{prop antichain}
    There exists $\varepsilon = \varepsilon(d)$ and $C > 0$ such that the following holds.
    Let $\mathbf{A}$ be an antichain of density $\delta$. Then
    $$
        \|T_{\mathbf{A}}\|_{2 \to 2} \leq C \delta^{\varepsilon}\,. 
    $$
\end{proposition}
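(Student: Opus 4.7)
The plan is to adapt Fefferman's proof of the analogous estimate for the classical Carleson operator from \cite{Fefferman1973} to the more singular setting where the kernel is supported on the paraboloid. First note that the sets $E(p)$, $p \in \mathbf{A}$, are pairwise disjoint: if $x \in E(p) \cap E(p')$ with $s(p) \leq s(p')$, then $I(p) \subset I(p')$ (both dyadic cubes contain $x$) and $\omega(p') \subset \omega(p)$ (both frequency cubes contain $N(x)$), forcing $p \leq p'$ and contradicting the antichain hypothesis unless $p = p'$. Consequently $\|T_\mathbf{A} f\|_2^2 = \sum_{p \in \mathbf{A}}\|T_p f\|_2^2$, and the claim reduces to establishing $\sum_p \|T_p f\|_2^2 \leq C\delta^{2\varepsilon}\|f\|_2^2$.

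Fix a large integer $n$ to be optimized and decompose $k_s(z) := K_s(z')\delta(z_{d+1} - |z'|^2) = k_s^{\mathrm{sm},n}(z) + k_s^{\mathrm{r},n}(z)$, where $k_s^{\mathrm{sm},n}(z) := K_s(z')\psi_s^n(z_{d+1} - |z'|^2)$ for $\psi_s^n$ a smooth bump of total mass one at scale $2^{2s-2n}$ in $z_{d+1}$. Thus $k_s^{\mathrm{sm},n}$ is a smooth function and $\widehat{k_s^{\mathrm{sm},n}}$ is essentially concentrated in the slab $\{|\xi_{d+1}| \lesssim 2^{-2s+2n}\}$, while $\widehat{k_s^{\mathrm{r},n}}$ vanishes (up to rapid decay) on the complementary central slab. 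Splitting $T_p = T_p^{\mathrm{sm},n} + T_p^{\mathrm{r},n}$, I treat the two contributions separately.

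For the smoothened operator, each $T_p^{\mathrm{sm},n}$ is essentially frequency-localized near $-\omega(p)$, because the modulation $e^{-iN(x)\cdot z}$ translates $\widehat{k_s^{\mathrm{sm},n}}$ by $-N(x) \in -\omega(p)$. The argument of Lemmas~1 and~2 in \cite{Fefferman1973} then combines almost-orthogonality across tiles with disjoint frequency cubes with the density hypothesis (which controls how often $N(x)$ lies in dilated frequency cubes $\lambda\omega(p)$) to give
\begin{equation*}
    \Bigl\|\sum_{p \in \mathbf{A}} T_p^{\mathrm{sm},n}f\Bigr\|_2 \leq C\, 2^{Cn}\delta^{\varepsilon_1}\|f\|_2,
\end{equation*}
where the factor $2^{Cn}$ reflects the enlarged spatial and Fourier support of the smoothed kernel. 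For the remainder, the disjointness of the $E(p)$ combined with a lattice $\Lambda_s \subset V$ of spacing comparable to the frequency cube scale at level $s$ yields a pointwise bound by a square function, and Plancherel reduces the task to
\begin{equation*}
    \sup_{\xi \in \R^{d+1}}\sum_{s \in \Z}\sum_{N \in \Lambda_s}\bigl|\widehat{k_s^{\mathrm{r},n}}(\xi - N)\bigr|^2 \lesssim 2^{-cn}.
\end{equation*}
Stationary phase for $\widehat{k_s}$ provides decay of order $(d-1)/2$ in directions tangent to the paraboloid but no decay in the normal direction; the construction of $k_s^{\mathrm{r},n}$ is arranged so that $\widehat{k_s^{\mathrm{r},n}}$ vanishes precisely on the slab of bad decay, enabling the required summability. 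Balancing $2^{Cn}\delta^{\varepsilon_1}$ against $2^{-cn/2}$ by choosing $n \sim \log(1/\delta)$ yields the claim.

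The main obstacle is the square function estimate for the remainder. The geometry of where $\widehat{k_s^{\mathrm{r},n}}$ must vanish, together with the requirement that translations by lattice points $N \in \Lambda_s$ remain in a region of good decay, is exactly the compatibility issue between $V$ and the normal bundle of the paraboloid. The hypothesis on $V$ in Theorem~\ref{main Lp} (vertical, or a proper subspace of the horizontal) is precisely what enables this compatibility, in line with the discussion of modulation symmetries around \eqref{eq nonl mod} in the introduction.
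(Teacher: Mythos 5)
Your overall strategy is the same as the paper's: split the kernel into a smoothed (low-frequency) part and a remainder, handle the smooth part by Fefferman's almost-orthogonality argument using the density hypothesis, and estimate the remainder by a square function via Plancherel and Fourier decay, finally optimizing the smoothing parameter. However, there is a genuine gap in the square function step. You fix the smoothing direction to be $z_{d+1}$ for both choices of $V$, so $\widehat{k_s^{\mathrm{r},n}}$ vanishes on a slab around the hyperplane $\xi_{d+1}=0$. This works for $V = \R^{d-1}\times\{0\}^2$, but fails for $V = \{0\}^d \times \R$. In that case the square function sums over lattice translates $N_{d+1}\in\Z$ in the $\xi_{d+1}$ direction, which is precisely the direction along which $\widehat{k_s^{\mathrm{r},n}}$ vanishes: after translating, most summands sit far from the vanishing slab and the gain is lost. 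Concretely, the rescaled sum $\sum_{N\in\Z}|\widehat{k_0^{\mathrm{r},n}}(\xi',\xi_{d+1}+N)|^2$ has a contribution of order $2^{-cn}$ that is \emph{uniform} in $\xi'$ whenever $|\xi'|$ is below the transition scale, so the further sum over $s\in\Z$ (which rescales $\xi'\mapsto 2^s\xi'$) diverges. The paper avoids this by choosing the smoothing direction transverse to $V$: it smooths in $x_{d+1}$ when $V$ is horizontal (so the remainder vanishes near $\xi_{d+1}=0$, see \eqref{eq smoothen def 1}), but smooths in $x'$ when $V$ is vertical (so the remainder vanishes near the line $\xi'=0$, see \eqref{eq smoothen def 2}). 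With the latter choice the $N_{d+1}$-sum preserves the $\xi'$-dependence, yielding a bound $\min\{\delta^{2\varepsilon_0}|\xi'|^2,\ |\xi'|^{1-d}\}$ that both vanishes as $|\xi'|\to 0$ and decays as $|\xi'|\to\infty$, which is exactly what is needed to sum in $s$.

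Two smaller corrections. First, the stationary phase decay of $\widehat{\mu_s}$ is of order $d/2$ in $|\xi_{d+1}|$ (the paraboloid is $d$-dimensional with nondegenerate Hessian; see \eqref{eq stat phase}), not $(d-1)/2$; this matters, because the lattice sum in the horizontal case produces a volume factor $|\xi_{d+1}|^{d-1}$ that would exactly cancel decay of order $(d-1)/2$. Second, the lattice $\Lambda_s$ must be strictly finer than the frequency cube scale (by a factor of $\delta^{\varepsilon_1}$, as in \eqref{eq par close}); with spacing merely comparable to the cube scale, the error in replacing $N(x)$ by the nearest lattice point is $O(1)$ rather than $o(1)$, giving no gain in the discretization step \eqref{eq error}.
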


\begin{proposition}
    \label{prop forest}
    For each $\varepsilon < \frac{1}{2} - \frac{1}{2(d+1)}$ there exist $C > 0$ such that the following holds. Let $\mathbf{F}$ be an $n$-forest. Then 
    $$
        \|T_\mathbf{F}\|_{2 \to 2} \leq C2^{-\varepsilon n}\,.
    $$
\end{proposition}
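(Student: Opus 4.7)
The plan is to adapt Fefferman's treatment of forests in the proof of the classical Carleson theorem (Lemmas~4 and 5 of \cite{Fefferman1973}; see also the adaptation in \cite{ZK2021}), with new ingredients specific to the paraboloid setting. The two main inputs are the tree estimate Proposition \ref{prop tree}, which bounds each $T_\mathbf{T}$ in terms of the density of $\mathbf{T}$ and a suitable frequency-localized $L^2$-norm of $f$, and a new almost orthogonality estimate between two distinct trees of the forest, exploiting the $2^{10dn}$-separation of their central frequencies and the oscillatory integral bounds on paraboloids from Section~\ref{sub sec aux sep tree}. These will be combined with the overlap estimate \eqref{eq forest overlap}.

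Concretely, I would decompose $T_\mathbf{F} = \sum_{\mathbf{T} \in \mathbf{F}} T_\mathbf{T}$ and expand
$$
    \|T_\mathbf{F} f\|_2^2 = \sum_{\mathbf{T}} \|T_\mathbf{T} f\|_2^2 + 2\,\mathrm{Re} \sum_{\mathbf{T} \neq \mathbf{T}'} \langle T_\mathbf{T} f, T_{\mathbf{T}'} f \rangle\,.
$$
For the diagonal, Proposition~\ref{prop tree} should yield a bound of the form $\|T_\mathbf{T} f\|_2 \leq C \dens(\mathbf{T})^{1/2 - 1/(2(d+1))} \|P_\mathbf{T} f\|_2$, where $P_\mathbf{T}$ is essentially a frequency localization to $\omega(\mathbf{T})$ combined with a mild spatial localization to a neighborhood of $I(\mathbf{T})$, and the exponent $1/2 - 1/(2(d+1))$ reflects the Sobolev smoothing for truncated singular integrals along paraboloids that powers the tree estimate. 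Using the density bound $\dens(\mathbf{T}) \leq 2^{-n}$, the frequency separation of the trees (so the frequency parts of the $P_\mathbf{T}$ are almost disjoint), and the overlap bound \eqref{eq forest overlap} (to control spatial overlap), one obtains $\sum_\mathbf{T} \|T_\mathbf{T} f\|_2^2 \leq C \log(n+2) \cdot 2^{-(1 - 1/(d+1))n} \|f\|_2^2$.

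The off-diagonal sum is the main technical challenge. I would prove an almost orthogonality estimate of the form $|\langle T_\mathbf{T} f, T_{\mathbf{T}'} g \rangle| \leq C_M 2^{-Mn} \cdot B_{\mathbf{T}, \mathbf{T}'}(f,g)$ for any large $M$, where $B_{\mathbf{T}, \mathbf{T}'}(f,g)$ is a \emph{local} bilinear form depending only on the values of $f$ on $I(\mathbf{T})$ and of $g$ on $I(\mathbf{T}')$. The mechanism is that $T_\mathbf{T}$ is essentially a modulation by the central frequency $c(\omega(\mathbf{T}))$ composed with a frequency-neutral operator, so the composition $T_{\mathbf{T}'}^* T_\mathbf{T}$ carries a phase factor oscillating with frequency $c(\omega(\mathbf{T})) - c(\omega(\mathbf{T}'))$ of magnitude $\gtrsim 2^{10dn}$, and the rapid oscillation produces the decay. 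The main obstacle is that, unlike in the classical Carleson setting where one extracts this decay by a straightforward integration by parts, here the kernels $K_s$ live on the paraboloid and one cannot integrate by parts freely in all variables. The replacement for this step consists of the stationary-phase-type oscillatory integral estimates for paraboloids in Section~\ref{sub sec aux sep tree}, and requires care to keep the resulting bound local. Granting the local almost orthogonality, summation of the off-diagonal terms via Cauchy--Schwarz and the overlap bound \eqref{eq forest overlap} yields a contribution dominated by the diagonal, and one concludes $\|T_\mathbf{F}\|_{2 \to 2}^2 \leq C \log(n+2) \cdot 2^{-(1-1/(d+1))n}$, giving the proposition for any $\varepsilon < \frac{1}{2} - \frac{1}{2(d+1)}$.
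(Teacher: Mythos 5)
Your high-level plan --- decompose $T_\mathbf{F}$ into trees, treat the diagonal via the tree estimate, and handle the off-diagonal by almost orthogonality between $\Delta$-separated trees using oscillatory integral bounds on the paraboloid --- is aligned with the paper's Fefferman-style strategy, and you correctly identify the role of the estimates in Section~\ref{sub sec aux sep tree} as the replacement for integration by parts. However, two of your key technical claims fail, and a structural step is missing. The claimed off-diagonal decay $|\langle T_\mathbf{T} f, T_{\mathbf{T}'} g\rangle| \leq C_M 2^{-Mn} B_{\mathbf{T},\mathbf{T}'}(f,g)$ for \emph{any} $M$ is not achievable here: the Fourier transform of a measure carried by the paraboloid decays only like a fixed power (stationary phase), and the paper's Lemma~\ref{lem sep trees} accordingly extracts only the fixed power $\Delta^{-1/(10d)}$ from the separation. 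This is precisely why the separation exponent $2^{10dn}$ in the definition of an $n$-forest has to be calibrated against the achievable decay exponent, rather than relying on super-polynomial decay to crush the combinatorics.

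More structurally, expanding $\|T_\mathbf{F} f\|_2^2$ rather than $\|T_\mathbf{F}^* g\|_2^2$ misplaces the difficulty. Because the forest separation forces the sets $E(p)$ belonging to distinct trees to be pairwise disjoint, $T_\mathbf{T} f$ and $T_{\mathbf{T}'} f$ have disjoint supports and the off-diagonal of \emph{your} expansion vanishes identically; the entire burden then falls on the diagonal $\sum_\mathbf{T} \|T_\mathbf{T} f\|_2^2$. There the justification via ``almost disjoint frequency parts of $P_\mathbf{T}$'' is not adequate: Proposition~\ref{prop tree} gives an operator norm bound, not a projection-type bound with a $P_\mathbf{T}$ (the frequency localization is a separate statement, the paper's Lemma~\ref{lem T* proj}, and it concerns $T_\mathbf{T}^*$), and since $T_\mathbf{T} f = T_\mathbf{T}(\mathbf{1}_{3I(\mathbf{T})}f)$ with the $I(\mathbf{T})$ overlapping up to $2^n\log(n+2)$ times, a naive summation costs this overlap factor and the claimed bound $\log(n+2)\cdot 2^{-(1-1/(d+1))n}\|f\|_2^2$ does not follow. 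The paper resolves this by decomposing the forest into \emph{rows} $\mathbf{R}_i$ (unions of normal trees with pairwise disjoint tops) and expanding $\|T_\mathbf{F}^* g\|_2^2 = \sum_{i,j}\langle T_{\mathbf{R}_i}^*\mathbf{1}_{E(\mathbf{R}_i)}g, T_{\mathbf{R}_j}^*\mathbf{1}_{E(\mathbf{R}_j)}g\rangle$. Working on the adjoint side lets the disjoint $E(\mathbf{R}_i)$ decouple the \emph{inputs}, so the diagonal follows from $\|T_{\mathbf{R}_i}^*\|_{2\to 2}\lesssim 2^{-\varepsilon n}$ (spatial disjointness within a row plus Corollary~\ref{cor single tree}) and $\sum_i\|\mathbf{1}_{E(\mathbf{R}_i)}g\|_2^2\leq\|g\|_2^2$, while the off-diagonal is controlled by Lemma~\ref{lem rows}, which in turn rests on Lemma~\ref{lem sep trees} and Lemma~\ref{lem T* proj}. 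These three ingredients --- the adjoint, the row decomposition, and the frequency localization of $T_\mathbf{T}^*$ --- are what your proposal would need to add to close the argument.
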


We will independently prove Proposition \ref{prop organization} in Section \ref{sec organization} and Proposition \ref{prop antichain} in Section \ref{sec antichain}. Proposition \ref{prop forest} is proven using an estimate for trees and an almost orthogonality argument. The almost orthogonality argument and the deduction of Proposition \ref{prop forest} are carried out in Section \ref{sec forest}. The estimate for single trees is proven in Section \ref{sec tree}, and is a mild generalization of the following. 

\begin{proposition}
    \label{prop tree}
    For each  $\varepsilon < \frac{1}{2} - \frac{1}{2(d+1)}$ there exist $C > 0$ such that the following holds. Let $\mathbf{T}$ be a tree of density $\delta$. Then 
    $$
        \|T_\mathbf{T}\|_{2 \to 2} \leq C\delta^{\varepsilon}\,.
    $$
\end{proposition}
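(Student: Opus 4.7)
My plan is to reduce the tree operator to a maximally truncated singular Radon transform along the paraboloid, applied to a modulated copy of $f$, and then to exploit the density bound via sparse domination to gain a positive power of $\delta$.

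\textbf{Step 1: Modulation and reduction to a scalar truncated SIO.} Let $N_0 = c(\omega(\mathbf{T}))$ denote the central frequency of the tree's top tile and set $\tilde f(z) = e^{iN_0 \cdot z}f(z)$. Because $(y,|y|^2) = (x',x_{d+1})-(z',z_{d+1})$ on the paraboloid whenever $z = (x'-y, x_{d+1}-|y|^2)$, the phase can be split as
\begin{equation*}
    e^{-iN(x)\cdot(y,|y|^2)} = e^{-iN(x)\cdot x}\, e^{iN_0\cdot z}\, e^{iM(x)\cdot(y,|y|^2)},\qquad M(x) := N(x)-N_0.
\end{equation*}
Since for $p \in \mathbf{T}$ we have $\omega(p)\supset\omega(\mathbf{T})$ with $|M(x)| \lesssim 2^{-s(p)}$ measured anisotropically in $V$, and $|y|\sim 2^{s(p)}$ on $\supp K_{s(p)}$, the residual factor $e^{iM(x)\cdot(y,|y|^2)}$ has bounded derivatives in $y$ at scale $2^{s(p)}$. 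Thus the $x$-dependent kernels $\widetilde K_{s,x}(y) := e^{iM(x)\cdot(y,|y|^2)}K_{s}(y)$ are Calderón--Zygmund kernels of scale $2^{s}$ up to a controlled loss in the cancellation condition (which can be absorbed into a maximal paraboloid average and handled by standard means). Tiles $p\in\mathbf{T}$ with $x\in E(p)$ form a chain ordered by scale, so after collecting the unimodular factor $e^{-iN(x)\cdot x}$, the operator becomes
\begin{equation*}
    |T_\mathbf{T} f(x)| \le \mathbf{1}_{F(\mathbf{T})}(x)\, |\mathcal{T}_{*}\tilde f(x)| + \mathbf{1}_{F(\mathbf{T})}(x)\,\mathcal{M}_{\mathbf{T}}\tilde f(x),
\end{equation*}
where $\mathcal{T}_{*}$ is the maximally truncated singular Radon transform along the paraboloid, $\mathcal{M}_{\mathbf{T}}$ is a controlled maximal paraboloid average, and $F(\mathbf{T}) = \bigcup_{p\in\mathbf{T}} E(p)$.

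\textbf{Step 2: Sparse domination.} Apply Lemma \ref{lem sparse} (Oberlin's sparse bound for singular Radon transforms along paraboloids) to the bilinear form $\langle \mathbf{1}_{F(\mathbf{T})}\mathcal{T}_{*}\tilde f, g\rangle$. This yields a sparse collection $\mathcal{S}$ of dyadic parabolic cubes supported in a neighborhood of $I(\mathbf{T})$ such that
\begin{equation*}
    |\langle T_\mathbf{T} f, g\rangle| \lesssim \sum_{Q\in\mathcal{S}} |Q|\, \langle |f|^{p_1}\rangle_{Q}^{1/p_1} \, \langle |\mathbf{1}_{F(\mathbf{T})}g|^{p_2'}\rangle_Q^{1/p_2'},
\end{equation*}
where $(1/p_1, 1/p_2)$ lies in the sparse polygon of exponents, which is governed by the $L^p$-improving range of the paraboloid average; the relevant vertex allows $p_1 = 2$ and $p_2'$ arbitrarily close to the endpoint dictated by $1/2 - 1/p_2 < 1/(2(d+1))$. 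The Sobolev smoothing estimate mentioned in Section \ref{sec antichain}'s introduction enters here to make the sparse input at the lower truncation scale meaningful.

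\textbf{Step 3: Converting density into a power of $\delta$.} On each $Q\in\mathcal{S}$ either $Q$ is not related to the tree (and the restriction $\mathbf{1}_{F(\mathbf{T})}$ may vanish on $Q$) or $Q$ is nested below $I(\mathbf{T})$ and its scale is bounded by $s_0 = s(\tp(\mathbf{T}))$. In the nontrivial case, the definition \eqref{eq densdef} of $\dens(\mathbf{T})\le\delta$ with $\lambda$ chosen so that $\lambda\omega(\mathbf{T})$ covers the frequency range of tiles containing $Q$ gives $|F(\mathbf{T})\cap Q| \lesssim \delta |Q|$. Then Hölder's inequality produces
\begin{equation*}
    \langle |\mathbf{1}_{F(\mathbf{T})}g|^{p_2'}\rangle_Q^{1/p_2'} \le \bigl(|F(\mathbf{T})\cap Q|/|Q|\bigr)^{1/p_2' - 1/2} \langle |g|^2\rangle_Q^{1/2} \lesssim \delta^{1/p_2' - 1/2} \langle |g|^2\rangle_Q^{1/2}.
\end{equation*}
Summing over the sparse family $\mathcal{S}$ by the standard pairing against the $L^2$ maximal function yields $\|T_\mathbf{T}\|_{2\to 2} \lesssim \delta^{1/p_2' - 1/2}$. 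As $p_2'$ approaches its endpoint, the exponent $1/p_2' - 1/2$ approaches $d/(2(d+1)) = 1/2 - 1/(2(d+1))$, giving the desired gain for any $\varepsilon$ strictly below this value.

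\textbf{Main obstacle.} The delicate step is Step 3: the sparse cubes $Q\in\mathcal{S}$ do not come from tiles in $\mathbf{T}$, so one has to match their geometry (anisotropic, parabolic) to the definition of density, which is formulated with respect to admissible tiles. In particular, one must select $\lambda\ge 1$ in \eqref{eq densdef} correctly at each scale of $Q$ and ensure that $\lambda\omega(p')\subset\lambda\omega(p)$ can be arranged for some $p,p'$ in the tree's convex cylinder whose spatial cubes bracket $Q$. A secondary technical point is that the sparse bound from Lemma \ref{lem sparse} does not reach the full $L^p$-improving range for paraboloid averages; this gap is precisely what limits $\varepsilon$ to $1/2 - 1/(2(d+1))$ rather than to $1/(d+1)$, matching the restriction \eqref{eq p condition}.
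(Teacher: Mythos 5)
Your strategy matches the paper's: modulate by a fixed frequency from $\omega(\mathbf{T})$, reduce to a maximally truncated singular Radon transform plus a maximal average (the paper's Lemma \ref{lem tree reduction}), apply Oberlin's sparse bound (Lemma \ref{lem sparse}), and convert the density bound into a power of $\delta$ via H\"older on the sparse cubes (Corollary \ref{cor sparse}). You also correctly identify where the delicate step lies.

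However, Step 3 as written has a genuine gap. You write the sparse bound with the $Q$-independent restriction $\mathbf{1}_{F(\mathbf{T})} g$ in the dual average, and then assert $|F(\mathbf{T})\cap Q| \lesssim \delta |Q|$ for sparse cubes $Q$. This is not true in general: for a small sparse cube $Q$ sitting deep inside the spatial cube of a single large tile $p\in\mathbf{T}$, the set $F(\mathbf{T})\cap Q$ may receive a large contribution from $E(p)$ (and even from tiles above $p$ in the convex cylinder), and the density definition, which compares $|E(\lambda,p')|$ against $|I(p')|$ and not against smaller cubes, gives no control on $|E(p)\cap Q|/|Q|$. The paper handles this by proving the sparse bound in a sharper, $Q$-dependent form: Lemma \ref{lem sparse} inserts $\mathbf{1}_{E(Q)}$, where $E(Q)=\bigcup_{p\in\mathbf{T},\,I(p)\subset 3Q}E(p)$ collects only tiles whose spatial cubes sit inside $3Q$. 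This requires a (small but real) modification of Oberlin's iteration, via the identity $T_Q = \mathbf{1}_{E(Q)}T_Q$ valid because only tiles with $s(p)\le s(Q)$ and $3I(p)\cap Q\neq\emptyset$ contribute. Then Corollary \ref{cor sparse} introduces the collection $\mathbf{L}$ of maximal dyadic cubes $L$ contained in some $I(p)$ but containing no $I(p')$ for $p,p'\in\mathbf{T}$; the density bound gives $|E(L)|\le\delta|L|$, and since any $p$ with $I(p)\subset 3Q$ is fully covered by cubes $L\subset 3Q$, one obtains $|E(Q)|\lesssim\delta|Q|$ for every dyadic $Q$. Without replacing $F(\mathbf{T})$ by $E(Q)$ and without the $\mathbf{L}$-decomposition, the H\"older argument does not yield any positive power of $\delta$, so these two ingredients are the missing content of your Step 3.
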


\section{Tile organization: Proof of Proposition \ref{prop organization}}
\label{sec organization}

Let $k \geq 0$ and let $\mathbf{D}_k(F)$ be the set of maximal dyadic cubes $Q$ with $|Q \cap F|/|Q| \geq 2^{-k-1}$. Let $\tilde{\mathbf{P}}_{\leq k}$ be the set of  tiles $p \in \mathbf{P}_{\text{ad}}$ such that $I(p)$ is contained in some $Q \in \mathbf{D}_{k}(F)$ and let $\tilde{\mathbf{P}}_k := \tilde{\mathbf{P}}_{\leq k} \setminus \tilde{\mathbf{P}}_{\leq k-1}$.
Then we have $\mathbf{1}_F T_{\mathbf{P}_\text{ad}} = \sum_{k \geq 0} \mathbf{1}_F T_{\tilde{\mathbf{P}}_k}$, and each $p \in \tilde{\mathbf{P}}_k$ satisfies that $|I(p) \cap F|/|I(p)| < 2^{-k}$. We define 
$$
    \overline{E}(p) := \{ x \in I(p) \cap F \, : \, N(x) \in \omega(p)\}\,,
$$
and define $\tilde{\mathbf{M}}_{n,k}$ to be the set of maximal tiles $p$ in $\tilde{\mathbf{P}}_k$ such that $|\overline{E}(p)|/|I(p)| \geq 2^{-n-1}$. 
\begin{lemma}
    \label{lem ex}
    The exceptional set
    $$
        \tilde F_1 := F \cap \bigcup_{k \geq 0} \bigcup_{Q \in \mathbf{D}_k(F)} \bigcup_{n \geq k} \{x \in Q \, : \, \sum_{\substack{p \in \tilde{\mathbf{M}}_{n,k}\\ I(p) \subset Q}} \mathbf{1}_{I(p)} \geq 1000 \cdot 2^n\log(n+2)\}
    $$
    satisfies $|\tilde F_1| \leq |F|/4$.
\end{lemma}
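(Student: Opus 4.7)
The plan is to apply Markov's inequality to each of the sets
\[
    A_{n,k,Q} := \left\{x \in Q : \sum_{\substack{p \in \tilde{\mathbf{M}}_{n,k}\\ I(p) \subset Q}}\mathbf{1}_{I(p)}(x) \geq 1000 \cdot 2^n \log(n+2)\right\}
\]
and sum the resulting bounds over $k \geq 0$, $Q \in \mathbf{D}_k(F)$, and $n \geq k$ to produce an upper bound on $|\tilde F_1|$.

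The first key step will be to show that the sets $\overline{E}(p)$ for $p \in \tilde{\mathbf{M}}_{n,k}$ are pairwise disjoint. Indeed, two distinct maximal tiles $p_1, p_2$ sharing a point $y \in \overline{E}(p_1) \cap \overline{E}(p_2)$ both contain the time-frequency pair $(y, N(y))$. Since the family of tiles containing a fixed time-frequency pair is totally ordered under $\leq$ (the spatial cubes $I$ nested around $y$ force the reverse nesting of the frequency cubes $\omega$ around $N(y)$), this forces $p_1, p_2$ to be comparable; but two distinct maximal elements of the set $S_{n,k} := \{p \in \tilde{\mathbf{P}}_k : |\overline{E}(p)|/|I(p)| \geq 2^{-n-1}\}$ cannot be comparable, a contradiction.

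Using disjointness and the density lower bound $|I(p)| \leq 2^{n+1}|\overline{E}(p)|$ from the definition of $\tilde{\mathbf{M}}_{n,k}$, one obtains $\sum_{p : I(p) \subset Q} |I(p)| \leq 2^{n+1}|Q \cap F|$. Further exploiting the sparseness estimate $|I(p) \cap F| < 2^{-k}|I(p)|$ valid for every $p \in \tilde{\mathbf{P}}_k$ (since $I(p)$ is not contained in any $\mathbf{D}_{k-1}(F)$-cube), integrating against $\mathbf{1}_F$ instead of $\mathbf{1}_Q$ improves the bound by a factor $2^{-k}$, giving $\sum_p |I(p) \cap F| \leq 2^{n+1-k}|Q \cap F|$. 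Markov then yields
\[
    |A_{n,k,Q} \cap F| \leq \frac{2^{n+1-k}|Q\cap F|}{1000 \cdot 2^n \log(n+2)} = \frac{2^{1-k}|Q \cap F|}{1000 \log(n+2)}.
\]

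The final and most delicate step is the summation over the three indices. Summing over $Q \in \mathbf{D}_k(F)$ uses disjointness of the $Q$'s and $\sum_Q |Q \cap F| \leq |F|$. The main obstacle will be controlling the subsequent double sum over $n \geq k$ and $k \geq 0$: the factor $2^{-k}$ gives geometric decay in $k$, but the naive $n$-sum $\sum_{n \geq k} 1/\log(n+2)$ is delicate and requires a refinement. My expectation is that this is handled by complementing the Markov bound with a concentration-type improvement for the pointwise count function $N_n(x) := \sum_p \mathbf{1}_{I(p)}(x)$ — for instance, a John--Nirenberg style argument exploiting the fact that at each fixed scale only $O(2^{n-k})$ frequency cubes contribute to $N_n$, yielding a sub-exponential tail bound that replaces $1/\log(n+2)$ by an exponentially small weight for $n$ far from $k$. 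The constant $1000$ in the threshold is calibrated to absorb the universal multiplicative factors produced by this bookkeeping, making the total bound at most $|F|/4$.
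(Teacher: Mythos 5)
Your preliminary steps are correct and match the paper: the sets $\overline{E}(p)$ for $p \in \tilde{\mathbf{M}}_{n,k}$ are pairwise disjoint (since two distinct maximal tiles sharing a point of $\overline{E}(p_1)\cap\overline{E}(p_2)$ would be comparable), which gives $\sum_{p: I(p)\subset J}|I(p)| \leq 2^{n+1}|J|$ for every dyadic $J$, and the sparseness bound $|I(p)\cap F| < 2^{-k}|I(p)|$ for $p \in \tilde{\mathbf{P}}_k$ is also used in the paper. However, the proof as you have written it has a genuine gap: after Markov you arrive at $|A_{n,k,Q}\cap F| \leq 2^{1-k}|Q\cap F|/(1000\log(n+2))$, and $\sum_{n\geq k} 1/\log(n+2)$ diverges. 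You correctly notice this and guess that a John--Nirenberg argument is the fix, but you do not carry it out --- you only write ``my expectation is that this is handled by\ldots''. That is a sketch of a plan, not a proof.

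Moreover, your description of what the John--Nirenberg step should exploit is slightly off. You propose to use ``the fact that at each fixed scale only $O(2^{n-k})$ frequency cubes contribute to $N_n$''. That is not the relevant input. The paper's actual ingredient is the Carleson-type packing condition $\sum_{p \in \tilde{\mathbf{M}}_{n,k},\, I(p)\subset J} |I(p)| \leq 2^{n+1}|J|$ holding for \emph{every} dyadic cube $J$ (not just $J=Q$), which follows from the disjointness of the $\overline{E}(p)$ exactly as you argued. The dyadic John--Nirenberg inequality for such packing families then yields the exponential tail bound
\[
|\{x\in Q : N_n(x)\geq 1000\cdot 2^n\log(n+2)\}| \leq c_1\exp\bigl(-c_2\cdot 500\log(n+2)\bigr)|Q| \leq (n+2)^{-100}|Q|,
\]
replacing your $1/\log(n+2)$ factor by $(n+2)^{-100}$, which is summable in $n$. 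The paper then applies the $2^{-k}$ sparseness \emph{after} this step, using that $A_{n,k,Q}$ is a disjoint union of cubes $I(p)$ each satisfying $|I(p)\cap F|\leq 2^{-k}|I(p)|$, to pass from $|Q|$ to $2(n+2)^{-100}|Q\cap F|$, and then sums over $Q$, $n\geq k$, $k\geq 0$. Note that in this version the $2^{-k}$ is consumed in converting $|Q|$ to $|Q\cap F|$; the convergence of the $k$-sum actually comes from the inner index starting at $n=k$, so $\sum_{n\geq k}(n+2)^{-100}\lesssim (k+2)^{-99}$. So your bookkeeping of where the $2^{-k}$ gain is spent would also need to be revised if you execute the John--Nirenberg step.
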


\begin{proof}
We have for each $J \in \mathbf{D}$
$$
    \sum_{\substack{p \in \tilde{\mathbf{M}}_{n,k}\\ I(p) \subset J}} |I(p)| \leq 2^{n+1} \sum_{\substack{p \in \tilde{\mathbf{M}}_{n,k}\\ I(p) \subset J}} |\overline{E}(p)| \leq 2^{n+1} |J|\,,
$$
since tiles in $\tilde{\mathbf{M}}_{n,k}$ are pairwise not comparable and hence the sets $\overline{E}(p)$, $p \in \tilde{\mathbf{M}}_{n,k}$ are pairwise disjoint. 
We estimate for each $Q \in D_k(F)$ and each $n \geq k$ using the John-Nirenberg inequality:
\begin{multline}
\label{eq JN appl}
    |\{x \in Q \, : \, \sum_{\substack{p \in \tilde{\mathbf{M}}_{n,k}\\ I(p) \subset Q}} \mathbf{1}_{I(p)} \geq 1000 \cdot 2^n\log(n+2)\}|\\
    \leq c_1  \exp(-c_2 \frac{1000 \cdot 2^n \log(n+2)}{2^{n+1}})|Q|\leq (n + 2)^{-100}|Q|\,.
\end{multline}
For the last inequality we have used that in this version of the John-Nirenberg inequality, one can choose $c_1 = e^2$ and $c_2 = (2e)^{-1}$.

Note that the set on the left hand side of \eqref{eq JN appl} is a disjoint union of cubes $I(p)$ each of which satisfies $|I(p) \cap F|/|I(p)| \leq 2^{-k}$, hence 
\begin{multline*}
    |F \cap \{x \in Q \, : \, \sum_{\substack{p \in \tilde{\mathbf{M}}_{n,k}\\ I(p) \subset Q}} \mathbf{1}_{I(p)} \geq 1000 \cdot 2^n\log(n+2)\}| \\
    \leq 2^{-k}(n+2)^{-100}|Q|\leq 2(n+2)^{-100} |Q \cap F|\,.
\end{multline*}
Summing up, we obtain
\begin{multline*}
    \sum_{k \geq 0} \sum_{Q \in \mathbf{D}_k(F)} \sum_{n \geq k} |F \cap \{x \in Q \, : \, \sum_{\substack{p \in \tilde{\mathbf{M}}_{n,k}\\ I(p) \subset Q}} \mathbf{1}_{I(p)} \geq 1000 \cdot 2^n\log(n+2)\}| \\
    \leq 2\sum_{k \geq 0} \sum_{n \geq k} (n+2)^{-100} |F| \leq \frac{1}{4} |F|\,.
\end{multline*}
This completes the proof.
\end{proof}

After removing the exceptional set $\tilde F_1$, only the tiles in 
$$
    \mathbf{P}_k := \{p \in \tilde{\mathbf{P}}_k \, : \, I(p) \not\subset \tilde F_1\}
$$
contribute, i.e. we have $\mathbf{1}_{\R^{d+1} \setminus \tilde F_1} T_{\tilde{\mathbf{P}}_k} = \mathbf{1}_{\R^{d+1} \setminus \tilde F_1} T_{ \mathbf{P}_k}$. Thus it suffices to decompose the sets $\mathbf{P}_k$ into forests and antichains.

We define the $k$-density of a tile $p \in \mathbf{P}_k$ to be 
$$
    \dens_k(p) := \sup_{\substack{\lambda \geq 1\\\text{odd}}} \sup_{\substack{p' \in \mathbf{P}_k: I(p) \subset I(p')\\ \lambda \omega(p') \subset \lambda \omega(p)}}  \lambda^{-\dim_h V} \frac{| E(\lambda, p')|}{|I(p')|}\,.
$$
Then we split each $\mathbf{P}_k$ into sets $\mathbf{H}_{n,k} := \{p\in\mathbf{P}_k \, : \, 2^{-n-1} < \dens_k(p) \leq 2^{-n}\}$, and decompose each of them separately into forests and antichains. Note that $\dens(\mathbf{C}) \leq \sup_{p \in \mathbf{C}} \dens_k(p)$ for each $\mathbf{C} \subset \mathbf{P}_k$, so all forests and antichains obtained in the decomposition of $\mathbf{H}_{n,k}$ have density at most $2^{-n}$.

\begin{lemma}
    \label{lem nk dec}
    For each $k \geq 0$ and $n \geq k$, there exists an exceptional set $\tilde F_{n,k}$ such that $|\tilde F_{n,k}| \leq 2^{-k-n-2}|F|$, and such that the set of tiles $p \in \mathbf{H}_{n,k}$ with $I(p) \not\subset \tilde{F}_{n,k}$ can be decomposed as a disjoint union of $O(n+1)$ many $n$-forests and $O_d((n+1)^2)$ many antichains.
\end{lemma}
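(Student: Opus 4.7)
The plan is to follow the standard Fefferman-style tile selection algorithm, adapting the John--Nirenberg argument from Lemma \ref{lem ex} to the density level $2^{-n}$ coming from $\mathbf{H}_{n,k}$.

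First, I would define $\tilde F_{n,k}$ as an overlap exceptional set tailored to $\mathbf{H}_{n,k}$: the subset of $\R^{d+1}$ where the indicators of suitable ``density witnesses'' associated to tiles in $\mathbf{H}_{n,k}$ pile up beyond a threshold of size $C \cdot 2^n \log(n+2)$. Each $p \in \mathbf{H}_{n,k}$ is accompanied, by the definition of $\dens_k$, by some witness tile $p' \in \mathbf{P}_k$ with $I(p) \subset I(p')$ and $|E(\lambda, p')|/|I(p')| \sim \lambda^{\dim_h V} 2^{-n}$. These witnesses play the role that $\tilde{\mathbf{M}}_{n,k}$ played in Lemma \ref{lem ex}, and a parallel John--Nirenberg estimate in the spirit of \eqref{eq JN appl} yields $|\tilde F_{n,k}| \leq 2^{-n-k-2}|F|$, with the extra $2^{-k}$ coming from the ambient bound $|I(p) \cap F|/|I(p)| < 2^{-k}$ for $p \in \tilde{\mathbf{P}}_k$.

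Next, within $\{p \in \mathbf{H}_{n,k} : I(p) \not\subset \tilde F_{n,k}\}$, I would greedily select tree tops $m$ as maximal tiles in the tile order and form the tree $\mathbf{T}(m) = \{p : p \leq m\}$ beneath each. These trees are convex by construction, and their tops form an antichain whose density is at most $2^{-n}$ by the note $\dens(\mathbf{C}) \leq \sup_{p \in \mathbf{C}} \dens_k(p)$ preceding the statement. Tiles failing the normality condition $3I(p) \subset I(\mathbf{T}(m))$ lie in a thin boundary layer of $I(m)$ and can be re-packaged into $O(1)$ additional antichains by grouping according to which face of $I(m)$ they abut.

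The separation condition is the principal source of further splitting. Two trees $\mathbf{T}, \mathbf{T}'$ can fail $2^{10dn}$-separation only when their top frequency cubes come within $2^{10dn}$ of each other at a common scale, which forces substantial spatial nesting of their tops. An iterative extraction resolves this: at each pass one strips from each tree the tiles witnessing a violation, producing a new antichain and leaving smaller trees with top frequency cubes that are one dyadic step more separated. After $O(\log_2 2^{10dn}) = O(n+1)$ passes the remaining trees are pairwise $2^{10dn}$-separated, and a further $O(1)$ refinement drops the overlap from $C \cdot 2^n \log(n+2)$ to $2^n \log(n+2)$, yielding the forest condition \eqref{eq forest overlap}. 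This produces $O(n+1)$ forests and, counting the antichains extracted over all passes together with the boundary and top antichains, $O_d((n+1)^2)$ antichains.

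The main obstacle is making the iterative separation argument quantitative in a way that is compatible with the anisotropic frequency geometry: one must show each pass extracts precisely an antichain (rather than a more complex structure) and preserves the density bound. Monotonicity of $\sup_{p \in \mathbf{C}} \dens_k(p)$ under removal of tiles handles the density preservation. The antichain structure of the extracted classes will follow from the maximality criterion in the greedy selection, combined with the geometric observation that tiles violating separation at a given pass are forced into disjoint spatial positions inside a single parent tree. A secondary subtlety is that the density witnesses $p'$ generated by $\dens_k$ must themselves lie in $\mathbf{P}_k$; this holds because $p'$ is sandwiched spatially between tiles of $\mathbf{C} \subset \mathbf{P}_k$, and maximality of cubes in $\mathbf{D}_k(F)$ forces $p'$ into the same layer.
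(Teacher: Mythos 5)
Your proposal misses the central combinatorial mechanism of the paper's proof, namely the counting via $\mathbf{B}(p)$, and it misidentifies the role of the exceptional set $\tilde F_{n,k}$.

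In the paper, the overlap bound that feeds the forest condition \eqref{eq forest overlap} does \emph{not} come from $\tilde F_{n,k}$; it comes from the exceptional set $\tilde F_1$ already constructed in Lemma \ref{lem ex}, which controls the overlap of the family $\mathbf{M}_{n,k}$ of maximal tiles with $|\overline E(p)|/|I(p)| \geq 2^{-n-1}$. The set $\tilde F_{n,k}$ is a much smaller object, introduced at the very end solely to enforce normality of the trees: after pruning the top $r = O_d(n)$ layers of each tree, one deletes thin boundary shells $I(m)\setminus (1 - 2^{-r})I(m)$ of the top cubes, and the bound $|\tilde F_{n,k}| \leq 2^{-n-k-2}|F|$ follows from the overlap bound for the tops and the density relation $|Q\cap F| \geq 2^{-k-1}|Q|$ for $Q \in \mathbf{D}_k(F)$. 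Your plan of running a fresh John--Nirenberg argument inside $\mathbf{H}_{n,k}$ to produce $\tilde F_{n,k}$ does not match this, and it is also unclear whether it would give the summable measure bound, since the John--Nirenberg estimate requires a relation between the witnesses and the factor $2^{-k}$ that the paper obtains through $\mathbf{D}_k(F)$, not through $\mathbf{H}_{n,k}$.

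The deeper gap is the tree construction and the separation. Simply taking maximal tiles in $\mathbf{H}_{n,k}$ as tree tops and assigning $p$ greedily to one such top does not produce disjoint trees with controlled overlap or separation, because many incomparable maximal tiles can dominate the same $p$. The paper resolves this by first pruning the top $n+2$ layers so that every surviving $p$ satisfies $p \leq m$ for some $m \in \mathbf{M}_{n,k}$ (this uses admissibility and a computation with the density witness $p'$), then setting $\mathbf{B}(p) := \{m \in \mathbf{M}_{n,k}: p\leq m\}$ and slicing $\mathbf{H}_{n,k}^0$ into the $O(n)$ classes $\mathbf{C}_j = \{2^{j-1}\leq |\mathbf{B}(p)| < 2^j\}$. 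Within $\mathbf{C}_j$ the trees under the maximal tiles are \emph{automatically} pairwise disjoint, because $p \in \tilde{\mathbf T}(m)\cap\tilde{\mathbf T}(m')$ would force $\mathbf{B}(m)\cup\mathbf{B}(m')\subset \mathbf{B}(p)$ with the two sets disjoint, overrunning the bound $|\mathbf{B}(p)| < 2^j$. This disjointness, combined with one pruning of the bottom $20dn$ layers and admissibility, yields $2^{10dn}$-separation in a single step. Your iterative stripping scheme has no such engine: the tiles violating separation in a given pass need not form an antichain, the number of passes is not controlled, and nothing ties the resulting trees to an overlap bound of size $2^n\log(n+2)$. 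Without the $\mathbf{B}(p)$-slicing (or an equivalent device), the decomposition into $O(n+1)$ $n$-forests and $O_d((n+1)^2)$ antichains does not go through.
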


\begin{proof}
    We first note that $\mathbf{H}_{n,k}$ is convex: Since $\dens$ is a decreasing function on tiles with respect to $\leq$, $\mathbf{H}_{n,k}$ is the difference of two down-sets, and as such convex.
    
    Next, we prune the top $n+2$ layers off $\mathbf{H}_{n,k}$: Let $\mathbf{H}_{n,k}^+$ be the set of tiles $p \in \mathbf{H}_{n,k}$ for which there exists no chain $p < p_1 < \dotsb < p_{n+2}$ with all $p_i \in \mathbf{H}_{n,k}$, where we use $p<p'$ to say that $p\leq p'$ and $p \neq p'$. Clearly, $\mathbf{H}_{n,k}^+$ is the union of at most $n+2$ antichains, so it suffices to decompose $\mathbf{H}_{n,k}^0 := \mathbf{H}_{n,k} \setminus \mathbf{H}_{n,k}^+$, and this set is still convex.

    For $n\geq k \geq 0$, let $\mathbf{M}_{n,k}$ to be the set of maximal tiles $p$ in $\mathbf{P}_k$ that satisfy $|\overline{E}(p)|/|I(p)| \geq 2^{-n-1}$. By Lemma \ref{lem ex} and the definition of $\mathbf{P}_k$, we then have the overlap estimate
    \begin{equation}
        \label{eq overlap}
        \sum_{p \in \mathbf{M}_{n,k}} \mathbf{1}_{I(p)} \leq 1000 \cdot 2^n \log(n+2)\,.
    \end{equation}

    We claim that for each $p \in \mathbf{H}_{n,k}^0$, there exists $m \in \mathbf{M}_{n,k}$ with $p \leq m$. Since $p \in \mathbf{H}_{n,k}^0$, there exists a chain $p < p_1 < \dotsb < p_{n+2}$ with $p_{n+2} \in \mathbf{H}_{n,k}$. Since all tiles are admissible, we have $3^{\lfloor (n+2)/2\rfloor} \omega(p_{n+2}) \subset \omega(p)$. 
    Since $p_{n+2} \in \mathbf{H}_{n,k}$, there exists an odd $\lambda \geq 1$ and a tile $p' \in \mathbf{P}_k$ with $I(p_{n+2}) \subset I(p')$ and $\lambda \omega(p') \subset \lambda \omega(p_{n+2})$ such that
    \begin{equation}
        \label{eq use density}
        \frac{|E(\lambda, p')|}{|I(p')|} \geq \lambda^{\dim_h V} 2^{-n-1}\,.
    \end{equation}
    Since $\lambda$ is odd, the set $\lambda \omega(p')$ is the disjoint union of $\lambda^{\dim_h V}$ cubes $\omega(p'')$ of tiles $p''$ with $I(p'') = I(p')$, thus there exists one such $p'' \in \mathbf{P}_k$ with $|\overline{E}(p'')|/|I(p'')| \geq 2^{-n-1}$.
    By definition of $\mathbf{M}_{n,k}$, there is a tile $m \in \mathbf{M}_{n,k}$ with  $p'' \leq m$.
    Equation \eqref{eq use density} implies that $\lambda \leq 2^{(n+1)/\dim_h V}$, so that 
    $$
        \omega(m) \subset \omega(p'') \subset 2^{(n+1)/\dim_h V} \omega(p_{n+2}) \subset \omega(p)\,.
    $$
    Combining all of this we obtain that $p \leq m$, so the claim holds.

    For a tile $p \in \mathbf{H}_{n,k}^0$, let $\mathbf{B}(p)$ be the set of tiles $m \in \mathbf{M}_{n,k}$ with $p \leq m$. Decompose $\mathbf{H}_{n,k}^0$ into $2n + 10$  collections 
    $$
        \mathbf{C}_j := \{p \in \mathbf{H}_{n,k} \, : \, 2^{j-1} \leq |\mathbf{B}(p)| < 2^{j}\}\,, \qquad j = 1, \dotsc, 2n + 10\,.
    $$
    The collections $\mathbf{C}_j$ are convex, since $p \leq p' \leq p''$ implies $B(p'') \subset B(p') \subset B(p)$, and by the overlap estimate \eqref{eq overlap} and the claim, their union is $\mathbf{H}_{n,k}^0$.

    Let $\mathbf{U}_j$ be the set of maximal tiles in $\mathbf{C}_j$, clearly $\mathbf{U}_j$ also has overlap bounded by $100 \cdot 2^n \log(n+2)$. For each $m \in \mathbf{U}_j$ let 
    $$
        \tilde{\mathbf{T}}(m) := \{p \in \mathbf{C}_j \, : \, p \leq m\}\,.
    $$
    Then the sets $\tilde{\mathbf{T}}(m)$ are disjoint: If $p \in \tilde{\mathbf{T}}(m) \cap \tilde{\mathbf{T}}(m')$ for $m \neq m'$, then $\mathbf{B}(m) \cup \mathbf{B}(m') \subset \mathbf{B}(p)$. But the sets $\mathbf{B}(m)$ and $\mathbf{B}(m')$ are disjoint: Else there would be $m''$ with $p \leq m, m' \leq m''$, which implies that $m, m'$ are comparable. But they are both maximal in $\mathbf{C}_j$, so they cannot be comparable.
    Hence $2^{j+1} > |\mathbf{B}(p)| \geq |\mathbf{B}(m)| + |\mathbf{B}(m')| \geq 2^j + 2^j$, a contradiction. In particular, tiles $p \in \tilde{\mathbf{T}}(m)$, $p' \in \tilde{\mathbf{T}}(m')$ for $m \neq m'$ are not comparable.

    The sets $\tilde{\mathbf{T}}(m)$ are of course also convex, so they are trees with top $m$. To obtain the separation property, we prune the bottom $20dn$ layers of the trees: We define $\mathbf{T}^-(m)$ to be the set of tiles $p \in \tilde{\mathbf{T}}(m)$ for which there exists no chain $p_{20dn} < \dotsb < p_1 < p$. Clearly, $\mathbf{T}^-(m)$ is the union of at most $20dn$ antichains. As tiles in different $\tilde{\mathbf{T}}(m)$ are never comparable, $\cup_{m \in \mathbf{U}_j} \mathbf{T}^-(m)$ is still a union of at most $20dn$ antichains. Let $\tilde{\mathbf{T}}^0(m) := \tilde{\mathbf{T}}(m) \setminus \mathbf{T}^-(m)$, this is still a convex tree with top $m$. If $p \in \tilde{\mathbf{T}}^0(m)$, then there exists a chain $p_{20dn} < \dotsb < p_1 < p$ in $\tilde{\mathbf{T}}(m)$. If $m' \neq m$ is such that $I(p) \subset I(m') = I(\tilde{\mathbf{T}}(m'))$, then by the last paragraph we must have $\omega(p_{20dn}) \cap \omega(m') = \emptyset$. Since all tiles are admissible, it follows that $3^{10dn} \omega(p) \cap \omega(m') = \emptyset$. Hence the trees $\tilde{\mathbf{T}}^0(m)$ are $3^{10dn}$-separated.

    Finally, we make the trees normal. For this purpose, let $r = 100 + d + 6n$. We first prune the top $r$ layers $\tilde{\mathbf{T}}^{0+}(m)$ off each $\tilde{\mathbf{T}}^0(m)$ similarly as in the last paragraph. This produces another $r = O_d(n+1)$ antichains.  Then we define the exceptional set 
    $$
        \tilde F_{n,k} =\bigcup_j \bigcup_{m \in \mathbf{U}_j} (I(m) \setminus (1 - 2^{-r})I(m))\,,
    $$
    where $aQ = c(Q) + \delta_{\log_2 a}(Q - c(Q))$ is the anisotropic dilate of $Q$ by a factor $a$ about its center $c(Q)$.
    We have, using that $1 \leq j \leq 2n + 10$, the Bernoulli inequality and \eqref{eq overlap} 
    \begin{align*}
        |\tilde F_{n,k}| &\leq (2n + 10)\cdot \sum_{m \in \mathbf{U}_j} (d+2) 2^{-r} |I(m)|\\
        &\leq (2n + 10)\cdot 1000\cdot 2^n \log(n+2) \cdot (d+2) \cdot 2^{-r} \cdot  \sum_{Q \in \mathbf{D}_k(F)} |Q|\\
        &\leq 2^{-3n-3} \sum_{Q \in \mathbf{D}_k(F)} |Q|\,.
    \end{align*}
    Using the definition of $\mathbf{D}_k(F)$ and that $n \geq k$ we estimate this by
    $$
        \leq 2^{-3n-2} 2^k |F| \leq 2^{-n-k-2} |F|\,.
    $$
    We finally define  
    $$
        \mathbf{T}(m) := \{p \in \tilde{\mathbf{T}}^0(m) \setminus \tilde{\mathbf{T}}^{0+}(m) \, :\, I(p) \not\subset \tilde F_{n,k}\}\,.
    $$
    Then $\mathbf{T}(m), m \in \mathbf{U}_j$ is still a collection of $2^{10dn}$-separated trees, and they are now normal: If $p \in \mathbf{T}(m)$ then $s(p) \leq s(m) - r$ and $I(p) \subset (1 - 2^{-r})I(m)$, and therefore $3I(p) \subset I(m)$. By the overlap estimate \eqref{eq overlap}, the collection $\{\mathbf{T}(m) \, : \, m \in \mathbf{U}_j\}$ is the union of at most $1000$ $n$-forests. Thus $\mathbf{C}_j$ can be decomposed into $1000$ $n$-forests and $O_d(n+1)$ antichains, which completes the proof.
\end{proof}

Recall that all antichains in Lemma \ref{lem nk dec} have density at most $2^{-n}$, since they are contained in $\mathbf{H}_{n,k}$. Taking into account all $k \geq 0$,  Lemma \ref{lem nk dec} then yields a total of $O_d((n+1)^3)$ antichains of density at most $2^{-n}$, a total of $O((n+1)^2)$ many $n$-forests, and an exceptional set $\tilde F_{2} = \bigcup_{n, k} \tilde F_{n,k}$ with $|\tilde F_2| \leq |F|/4$. Combining this with the estimate for the measure of $\tilde F_1$ from Lemma \ref{lem ex}, we obtain Proposition \ref{prop organization} with $\tilde F := \tilde F_1 \cup \tilde F_2$. 

\section{Antichains: Proof of Proposition \ref{prop antichain}}
\label{sec antichain}

\subsection{Decomposition of the kernel}
\label{sub sec dec antichain}
We fix an antichain $\mathbf{A}$ of density $\delta$.
We will decompose $T_\mathbf{A}$, based on a decomposition of the kernel of the singular Radon transform.
Let $\mu_s$ be the measure defined by 
$$
    \int f \, \mathrm{d}\mu_s = \int_{\R^d} f(y,|y|^2) K_s(y) \, \mathrm{d}y\,.
$$
Then the operator $T_p$ can be expressed as 
\begin{equation}
    \label{eq Tp mu}
    T_p f(x) = \mathbf{1}_{E(p)} (x) \int f(x-y) e^{iN(x) \cdot y} \mathrm{d}\mu_{s(p)}(y)\,.
\end{equation}
We now decompose the measures $\mu_s$ into a smooth part and a high-frequency part, choosing different decompositions depending on $V$. For this, we let $\varepsilon_0 > 0$ be a small enough positive number such that the exponent of $\delta$ in Lemma \ref{lem l antichain} below is positive. 

In the case $V = \R^{d-1} \times \{0\}^2$, we let $\varphi^1$ be a smooth bump function supported on $B(0,100^{-1}) \subset \R$ with integral $1$, and let
\begin{equation}
    \label{eq smoothen def 1}
    \varphi_{s,\varepsilon_0}^1(x) = \delta^{-2\varepsilon_0}2^{-2s} \varphi^1(2^{-2s} \delta^{-2\varepsilon_0} x_{d+1}) \delta(x')\,.
\end{equation}
We define the low frequency part of $\mu_s$ by 
\begin{equation}
    \mu_s^{l}(x) := \mu_s * \varphi_{s,\varepsilon_0}^1(x) = \delta^{-2\varepsilon_0} 2^{-2s} K_s(x') \varphi^1(2^{-2s}\delta^{-2\varepsilon_0}(x_{d+1} - |x'|^2)) \label{eq musl formula 1}
\end{equation}
and the high frequency part by $\mu_s^h = \mu_s - \mu_s^l$.
Note that $\varphi_{s,\varepsilon_0}^1$ is a measure supported on the line $x' = 0$, so $\mu_s^l$ is essentially frequency localized near the hyperplane $\xi_{d+1} = 0$.

In the case $V = \{0\}^d \times \R$, we let $\varphi^{d}$ be a smooth bump function supported on $B(0,100^{-1}) \subset \R^{d}$ with integral $1$, and let
\begin{equation}
    \label{eq smoothen def 2}
    \varphi_{s,\varepsilon_0}^d(x) = \delta^{-d\varepsilon_0}2^{-ds} \varphi^d(2^{-s} \delta^{-\varepsilon_0} x')\delta(x_{d+1})\,.
\end{equation}
We define the low frequency part of $\mu_s$ by 
\begin{align}
    \mu_s^{l}(x) &:= \mu_s * \varphi_{s,\varepsilon_0}^d(x)\nonumber\\
    &= \delta^{-d\varepsilon_0} 2^{-ds} \int \delta(|y'|^2 - x_{d+1}) K_s(y') \varphi^d(2^{-s}\delta^{-\varepsilon_0}(x' - y')) \, \mathrm{d}y'  \label{eq musl formula 2}
\end{align}
and the high frequency part by $\mu_s^h = \mu_s - \mu_s^l$. In this case, $\varphi_s^d$ is a measure supported on the hyperplane $x_{d+1} = 0$, and $\mu_s^l$ is essentially frequency localized near the line $\xi' = 0$.

In both cases the low frequency part $\mu_s^l$ is a function, and one can check using \eqref{eq musl formula 1} or \eqref{eq musl formula 2} and \eqref{eq ks size} that
\begin{equation}
    \label{eq musl 1}
    |\mu_s^l(x)| \lesssim \delta^{-d\varepsilon_0}|I(p)|^{-1}
\end{equation}
and
\begin{equation}
    \label{eq musl 2}
    |\partial_i \mu_{s(p)}^l(x)| \lesssim 
    \begin{cases}
        \delta^{-(d+2) \varepsilon_0} 2^{-s} |I(p)|^{-1} & \text{if $i = 1, \dotsc, d$,}\\
        \delta^{-(d+2) \varepsilon_0} 2^{-2s} |I(p)|^{-1} & \text{if $i = d+1$.}\\
    \end{cases}
\end{equation}

We define operators $T_p^l, T_p^h$ by replacing $\mu_{s(p)}$ with $\mu_{s(p)}^l, \mu_{s(p)}^h$ in \eqref{eq Tp mu}, and we define $T_\mathbf{A}^l, T_\mathbf{A}^h$ as the sum of $T_p^l, T_p^h$ over all $p \in \mathbf{A}$. Then we have $T_\mathbf{A} = T_\mathbf{A}^l + T_\mathbf{A}^h$. We prove estimates for $T_\mathbf{A}^l$ and $T_\mathbf{A}^h$ separately in the next two subsections.

\subsection{The smooth part}
\label{sub sec low antichain}

Here we estimate the smooth part $T_\mathbf{A}^l$. The argument is the same for $V = \R^{d-1} \times \{0\}^2$ and $V=\{0\}^d \times \R$. While $\mu_s^l$ is defined differently in these two cases, we will only use the properties \eqref{eq musl 1} and \eqref{eq musl 2} of $\mu_s^l$, which hold in both cases.

We define the separation $\Delta(p, p')$ of a pair of tiles $p, p'$ to be $1$ if $\omega(p) \cap \omega(p') \neq \emptyset$, and else we define it as 
$$
    \Delta(p, p') := \sup\{\lambda > 0 \, : \, \lambda \omega(p) \cap \omega(p') = \emptyset \ \text{and} \ \omega(p) \cap \lambda \omega(p') = \emptyset\}\,.
$$
Then we have the following almost orthogonality estimate.
\begin{lemma}
    \label{lem tile TT*}
    There exists a constant $C>0$ such that for all tiles $p, p'$ with $s(p) \geq s(p')$, we have 
    \begin{equation}
        \label{eq tile TT*}
        \lvert\langle T_p^{l*} g, T_{p'}^{l*} g\rangle\rvert \leq C \delta^{-(2d + 2)\varepsilon_0}\frac{\Delta(p, p')^{-1}}{|I(p)|} \int_{E(p)} |g| \int_{E(p')} |g|\,.
    \end{equation}
\end{lemma}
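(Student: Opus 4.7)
\medskip

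\noindent\emph{Proof proposal.}
The plan is to write down the kernel of $T_{p'}^l (T_p^l)^*$ explicitly, reduce the claim to an $L^\infty$ bound on an oscillatory integral, and then obtain that bound by one integration by parts driven by the frequency separation $\Delta(p,p')$.

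A direct calculation from \eqref{eq Tp mu} gives
\[\langle T_p^{l*} g, T_{p'}^{l*} g\rangle = \iint \mathbf{1}_{E(p)}(x)\mathbf{1}_{E(p')}(x')\, g(x)\overline{g(x')}\, e^{iN(x')\cdot(x'-x)}\, I\bigl(N(x')-N(x),\, x'-x\bigr)\, dx\, dx',\]
where $I(\xi,w) := \int e^{i\xi\cdot z}\,\overline{\mu_{s(p)}^l(z)}\,\mu_{s(p')}^l(z+w)\, dz$. Since $N(x), N(x') \in V$, the frequency $\xi = N(x')-N(x)$ always lies in $V$, and taking absolute values reduces the lemma to the pointwise bound $|I(\xi,w)| \lesssim \delta^{-(2d+2)\varepsilon_0}\Delta(p,p')^{-1}/|I(p)|$ uniformly for such $\xi$.

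I will need two auxiliary estimates for $\mu_s^l$: the trivial $\|\mu_s^l\|_1 \lesssim 1$ (inherited from the total variation bound $\|\mu_s\|\lesssim 1$ and $\|\varphi_{s,\varepsilon_0}\|_{L^1} = 1$), and, from \eqref{eq musl 2} combined with the support-volume bound $|\supp \mu_s^l| \lesssim \delta^{\varepsilon_0}|I(p)|$ (immediate from \eqref{eq musl formula 1} or \eqref{eq musl formula 2}), the derivative estimate $\|\partial_i\mu_s^l\|_1 \lesssim \delta^{-(d+1)\varepsilon_0}\,2^{-\alpha s}$ with $\alpha = 1$ for $i\leq d$ and $\alpha = 2$ for $i = d+1$. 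When $\Delta(p,p') = 1$, the bound $|I| \leq \|\mu_{s(p)}^l\|_\infty\|\mu_{s(p')}^l\|_1 \lesssim \delta^{-d\varepsilon_0}/|I(p)|$ via \eqref{eq musl 1} already suffices.

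For $\Delta(p,p') \geq 3$, let $\ell_{p'} = 2^{-\alpha s(p')}$ be the side length of $\omega(p')$ in $V$, with $\alpha = 1$ if $V = \R^{d-1}\times\{0\}^2$ and $\alpha = 2$ if $V = \{0\}^d\times\R$. The separation condition together with $s(p)\geq s(p')$ forces some coordinate axis $i$ of $V$ to satisfy $|\xi_i| \gtrsim \Delta(p,p')\ell_{p'}$. One integration by parts in $z_i$ then gives
\begin{align*}
|I(\xi,w)| &\leq \frac{1}{|\xi_i|}\bigl(\|\partial_i\mu_{s(p)}^l\|_\infty\|\mu_{s(p')}^l\|_1 + \|\mu_{s(p)}^l\|_\infty\|\partial_i\mu_{s(p')}^l\|_1\bigr) \\
&\lesssim \frac{\delta^{-(2d+1)\varepsilon_0}}{\Delta(p,p')|I(p)|},
\end{align*}
using $s(p)\geq s(p')$ to collect scale factors. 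Integrating $|I|$ against $\mathbf{1}_{E(p)}\mathbf{1}_{E(p')}|g(x)||g(x')|$ then concludes. The main technical difficulty, which I expect to be the main obstacle, is the bookkeeping of the $\delta^{\varepsilon_0}$-powers so the total loss stays within $\delta^{-(2d+2)\varepsilon_0}$: each use of \eqref{eq musl 2} in supremum norm costs $\delta^{-(d+2)\varepsilon_0}$, so one must always pair the supremum-norm factor with the $L^1$ norm of its partner, where the support-volume estimate cancels one $\delta^{\varepsilon_0}$.
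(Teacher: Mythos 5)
Your proof is correct and follows essentially the same route as the paper: expand the bilinear form via Fubini so that the claim reduces to a uniform bound on an oscillatory integral against $\mu_{s(p)}^l$ and $\mu_{s(p')}^l$, dispose of the case of small $\Delta$ by trivial $L^\infty$-$L^1$ pairing, and for $\Delta>3$ integrate by parts once in the coordinate of $V$ that realizes the separation, then invoke the size and derivative bounds \eqref{eq musl 1}, \eqref{eq musl 2} together with the support-volume bound $|\supp\mu_s^l|\lesssim\delta^{\varepsilon_0}|I|$.

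Two remarks worth noting. First, you are more explicit than the paper about how the powers of $\delta$ combine: you always pair an $L^\infty$ factor with an $L^1$ factor and use the support-volume estimate to convert the supremum derivative bound \eqref{eq musl 2} into the $L^1$ bound $\|\partial_i\mu_s^l\|_1\lesssim\delta^{-(d+1)\varepsilon_0}2^{-\alpha s}$; the paper instead works with the pointwise bound on $\partial_i\Phi$ and the Lebesgue measure of its support. Both are fine, and your version yields the slightly sharper exponent $\delta^{-(2d+1)\varepsilon_0}$. Second, in the separation step you take $|\xi_i|\gtrsim\Delta(p,p')\,\ell_{p'}$ with $\ell_{p'}$ the side length of $\omega(p')$, the \emph{larger} frequency cube since $s(p)\geq s(p')$. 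This is indeed the binding form of the separation, coming from the condition $\omega(p)\cap\lambda\omega(p')=\emptyset$, and it is precisely what makes the factor $2^{-\alpha s(p')}$ from $\|\partial_i\mu_{s(p')}^l\|_1$ cancel cleanly. The paper instead writes $2^{s(p)}|N_i(x_1)-N_i(x_2)|\geq\Delta/3$, which when $s(p)>s(p')$ is the weaker lower bound and, read literally, leaves a stray factor $2^{s(p)-s(p')}$; this appears to be a slip in the paper, and your choice is the one that makes the bookkeeping close.
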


\begin{proof}
    We abbreviate $\Delta = \Delta(p,p')$.
    We have, by Fubini and the definition of $T_p^l$
    \begin{multline}
        \label{eq first TT*}
        \lvert\langle T_p^{l*} g, T_{p'}^{l*} g\rangle\rvert \leq  \int\int |\mathbf{1}_{E(p)}g(x_1)||\mathbf{1}_{E(p')} g(x_2)| \\\left\lvert \int e^{i(N(x_1) - N(x_2))\cdot y} \mu_{s(p)}^l(x_1 - y) \overline{\mu_{s(p')}^l(x_2 - y)} \, \mathrm{d}y \right| \, \mathrm{d}x_1 \, \mathrm{d}x_2\,.
    \end{multline}
    The inner integral in \eqref{eq first TT*} is bounded by $C \delta^{-2d\varepsilon_0} |I(p)|^{-1}$, by \eqref{eq musl 1}. This implies \eqref{eq tile TT*} if $\Delta \leq 3$, so we will assume from now on that $\Delta > 3$.
    Fix $x_1 \in E(p), x_2 \in E(p')$ and let 
    $$
        \Phi(y) = \mu_{s(p)}^l(x_1 - y) \overline{\mu_{s(p')}^l(x_2 - y)}\,.
    $$
    By \eqref{eq musl 1} and \eqref{eq musl 2}, we have the bounds
    $$
        |\partial_i \Phi(y)| \lesssim \delta^{-(2d+2)\varepsilon_0} \frac{1}{|I(p)||I(p')|} 2^{-s(p')}\,, \quad i = 1, \dotsc, d
    $$
    and 
    $$
        |\partial_i \Phi(y)| \lesssim \delta^{-(2d+2)\varepsilon_0} \frac{1}{|I(p)||I(p')|} 2^{-2s(p')}\,, \quad i = d+1\,.
    $$
    Since $\Delta = \Delta(p, p') > 3$, we have by definition
    $$
        \max_{i=1, \dotsc, d} 2^{s(p)} |N_i(x_1) - N_i(x_2)| \geq  (\Delta -1)/2 \geq \Delta/3
    $$ 
    or 
    $$
        2^{2s(p)} |N_{d+1}(x_1) - N_{d+1}(x_2)| \geq (\Delta^2 - 1)/2 \geq \Delta/3\,.
    $$
    Integrating by parts in the corresponding direction, we find that
    \begin{multline*}
        \left\lvert \int e^{i(N(x_1) - N(x_2))\cdot y} \mu_{s(p)}^l(x_1 - y) \overline{\mu_{s(p')}^l(x_2 - y)} \, \mathrm{d}y \right|\\
        \lesssim \Delta^{-1} \delta^{-(2d + 2) \varepsilon_0} |I(p)|^{-1}\,,
    \end{multline*}
    which together with \eqref{eq first TT*} gives the claimed estimate \eqref{eq tile TT*}.
\end{proof}

\begin{lemma}
    \label{lem ant dens}
    Let $1 \leq q \leq \infty$.
    There exists $C > 0$ such that for every antichain $\mathbf{A}$ of density $\delta$ and every tile $p$
    $$
        \Bigg\|\sum_{p' \in \mathbf{A}, s(p') \leq s(p)} \Delta(p, p')^{-1} \mathbf{1}_{E(p')}\Bigg\|_{q} \leq C \delta^{\frac{\kappa}{q}} \Bigg|\bigcup_{p' \in \mathbf{A}, s(p') \leq s(p)} I(p')\Bigg|^{1/q}\,, 
    $$
    where $\kappa = (1 + \dim_h V)^{-1}$.
\end{lemma}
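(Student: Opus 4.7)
My plan is to prove the lemma by interpolating between $L^\infty$ and $L^1$ bounds on $f(x) := \sum_{p'}\Delta(p,p')^{-1}\mathbf{1}_{E(p')}(x)$, via $\|f\|_q \leq \|f\|_\infty^{1-1/q}\|f\|_1^{1/q}$. It suffices to show $\|f\|_\infty \leq 1$ and $\|f\|_1 \leq C\delta^\kappa|U|$, since these combine to yield exactly the claimed $\|f\|_q \leq C\delta^{\kappa/q}|U|^{1/q}$.

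The $L^\infty$ bound is direct from the antichain property: the sets $E(p')$ for $p' \in \mathbf{A}$ are pairwise disjoint. Indeed, a common point $x \in E(p_1) \cap E(p_2)$ for distinct $p_1, p_2 \in \mathbf{A}$ would lie in $I(p_1) \cap I(p_2) \neq \emptyset$ and have $N(x) \in \omega(p_1) \cap \omega(p_2) \neq \emptyset$; dyadic nesting forces both the spatial and frequency cubes to be nested, and comparing scales produces $p_1 \leq p_2$ or $p_2 \leq p_1$, contradicting the antichain property. Since $\Delta(p,p') \geq 1$ always, $\sum_{p'}\Delta^{-1}\mathbf{1}_{E(p')} \leq 1$ pointwise.

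For the $L^1$ bound, set $\mathbf{A}_\lambda := \{p' \in \mathbf{A} : s(p') \leq s(p),\, \Delta(p,p') \leq \lambda\}$ and decompose dyadically
\begin{equation*}
    \sum_{p'}\Delta(p,p')^{-1}|E(p')| \leq \sum_{j \geq 0}2^{-j}\sum_{p' \in \mathbf{A}_{2^{j+1}}}|E(p')|.
\end{equation*}
Disjointness of the $E(p')$ gives the trivial bound $\sum_{\mathbf{A}_\lambda}|E(p')| \leq |U_\lambda| \leq |U|$. The density hypothesis provides a second bound of the form $\sum_{\mathbf{A}_\lambda}|E(p')| \leq C\delta\lambda^{1+\dim_h V}|U|$. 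To see this, start from the per-tile inequality $|E(p')| \leq \delta|I(p')|$, obtained by applying the density estimate with outer tile $p_0 = p'$, dilation $\lambda' = 1$, and inner tile $p'$; summing gives $\sum_{\mathbf{A}_\lambda}|E(p')| \leq \delta\sum_{\mathbf{A}_\lambda}|I(p')|$. The remaining task is the spatial multiplicity estimate $\sum_{\mathbf{A}_\lambda}|I(p')| \leq C\lambda^{1+\dim_h V}|U|$, which rests on the observation that for each $x$ the tiles $p' \in \mathbf{A}_\lambda$ containing $x$ in $I(p')$ have pairwise disjoint $\omega(p')$, all satisfying $c(\omega(p)) \in C\lambda\omega(p')$, with $O(\lambda^{\dim_h V})$ admissible cubes at each scale. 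Combining the two bounds and optimizing the dyadic sum at $\lambda \approx \delta^{-\kappa}$ yields the target $L^1$ bound $C\delta^\kappa|U|$.

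Interpolation then completes the proof. The main technical obstacle is the multiplicity count: because the antichain property forces tiles with smaller spatial cubes $I(p')$ to carry \emph{larger} frequency cubes $\omega(p')$, a naive Fubini across scales does not terminate, and one needs a careful geometric packing argument to account for disjoint dyadic cubes of varying sizes clustered near $c(\omega(p))$ at scale $\lambda$. The exponent $\kappa = 1/(1+\dim_h V)$ reflects the resulting balance between the frequency-packing factor $\lambda^{\dim_h V}$ and the additional $\lambda$ arising from the spatial cross-scale constraint.
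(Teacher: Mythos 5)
Your $L^\infty$ bound and the reduction to $L^1$ via the interpolation inequality $\|f\|_q \leq \|f\|_\infty^{1-1/q}\|f\|_1^{1/q}$ exactly reproduce the paper's first two steps (the paper phrases the reduction via H\"older, which is the same thing). The $L^1$ bound is where your argument breaks down.

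The spatial multiplicity estimate $\sum_{p' \in \mathbf{A}_\lambda} |I(p')| \leq C\lambda^{1+\dim_h V}|U|$ is false, and not merely hard to prove. The pointwise count you try to bound, $\#\{p' \in \mathbf{A}_\lambda : x \in I(p')\}$, can be arbitrarily large for a fixed small $\lambda$. Concretely, normalizing $s(p)=0$ and taking $\omega(p)=[1,2)$, the dyadic frequency cubes $\omega_k = [2^k, 2^{k+1})$ for $k = 1, \dotsc, K$ are pairwise disjoint, each admissible, and a direct computation gives $\Delta(p, p'_k) = 3 - 2^{2-k} < 3$ for every $k$. Taking $p'_k$ to have $\omega(p'_k) = \omega_k$ and spatial cubes at scale $-k$ tiling a fixed $U = I(p)$ gives an antichain (the $\omega$'s are pairwise disjoint, so no two tiles are comparable) with $\sum |I(p'_k)| = K|U|$ and every tile in $\mathbf{A}_\lambda$ for $\lambda = 3$. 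No bound of the form $C\lambda^{1+\dim_h V}|U|$ can hold uniformly in $K$. Your heuristic ``$O(\lambda^{\dim_h V})$ cubes at each scale'' is correct, but the number of contributing scales is not controlled by $\lambda$: the $\Delta$-constraint does not preclude a single disjoint cube at each of $K$ consecutive scales nestled just outside $\omega(p)$, exactly as in the example.

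The real control on cross-scale accumulation in this setting comes from the \emph{density} applied with a \emph{large} dilation parameter, not from the per-tile bound $|E(p')| \leq \delta|I(p')|$ (which is the density at $\lambda'=1$). This is what the paper does. It splits off the tiles with $\Delta(p,p') \geq \delta^{-\kappa}$ (trivially fine), partitions $\bigcup I(p')$ into maximal dyadic cubes $L$ below all remaining $I(p')$, and for each $L$ picks a single auxiliary tile $p_L$ with $I(p_L)=\hat L$ and $\omega(p)\subset\omega(p_L)$. For every $p''$ in the remaining antichain with $L \subsetneq I(p'')$, the separation bound forces $\omega(p'')\subset\lambda\omega(p_L)$ with $\lambda \approx 5\delta^{-\kappa}$, so the whole union $L \cap \bigcup_{p''} E(p'')$ is contained in the single set $E(\lambda,p_L)$, which the density controls by $\lambda^{\dim_h V}\delta|I(p_L)|$. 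The supremum over $\lambda$ in the density definition is precisely what absorbs the unbounded number of scales; applying it only at $\lambda'=1$, as you do, throws this information away and leaves a genuinely impossible combinatorial problem. If you want to keep your dyadic-in-$\Delta$ decomposition, the estimate you should aim to prove is $\sum_{\mathbf{A}_\lambda}|E(p')| \leq C\lambda^{\dim_h V}\delta|U|$ directly from the density at dilation $\approx\lambda$ (via the same $L$-partition and $p_L$ construction), rather than passing through $\sum|I(p')|$.
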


\begin{proof}
    The estimate holds for $q = \infty$ with $C = 1$, since $\Delta(p, p') \geq 1$ and the sets $E(p')$, $p' \in \mathbf{A}$ are disjoint.
    By Hölder's inequality it therefore only remains to show the estimate for $q = 1$: 
    $$
        \sum_{p' \in \mathbf{A}, s(p') \leq s(p)} \Delta(p, p')^{-1} |E(p')| \lesssim \delta^{\kappa} \Bigg|\bigcup_{p' \in \mathbf{A}, s(p') \leq s(p)} I(p')\Bigg|\,.
    $$
    The contribution of tiles with $\Delta(p, p') \geq \delta^{-\kappa}$ is clearly bounded by the right hand side, since $E(p') \subset I(p')$ and the sets $E(p')$ are disjoint. So it remains only to estimate the contribution of the tiles
    $$
        \mathbf{A}' := \{p' \in \mathbf{A} \, : \, s(p') \leq s(p) \,, \ \Delta(p, p') < \delta^{-\kappa}\}\,.
    $$
    Let $\mathbf{L}$ be the set of maximal dyadic cubes $L$ for which there exists $p' \in \mathbf{A}'$ with $L \subsetneq I(p')$, and there exists no $p' \in \mathbf{A}'$ with $I(p') \subset L$. The set $\mathbf{L}$ is a partition of $\cup_{p' \in \mathbf{A}'} I(p')$, so it will be enough to show for each $L \in \mathbf{L}$ the estimate
    \begin{equation}
        \label{eq L small}
        \bigg|L \cap \bigcup_{p' \in \mathbf{A}'} E(p')\bigg| \leq \delta^{1 - \kappa \dim_h V } |L|\,.
    \end{equation}
    Fix $L \in \mathbf{L}$. There exists a tile $p' \in \mathbf{A}'$ with $I(p') \subset \hat L$. If $I(p') = \hat L$, define $p_L = p'$, and else let $p_L$ be the unique tile with $I(p_L) = \hat L$ and $\omega(p) \subset \omega(p_L)$.  If $\lambda$ is the smallest odd number such that $\lambda \geq 5\delta^{-\kappa}$, then we have in both cases that $ \lambda \omega(p') \supset \lambda \omega(p_L)$. For each $p'' \in \mathbf{A}'$ with $L\cap I(p'') \neq \emptyset$, it holds that $L \subsetneq I(p'')$ and $\omega(p'') \subset \lambda \omega(p_L)$. It follows that for all such $p''$, we have that $L \cap E(p'') \subset E(\lambda, p_L)$. Thus
    \begin{multline*}
        |L \cap \bigcup_{p'' \in \mathbf{A}'} E(p'')| \leq |E(5 \delta^{-\kappa} p_L)| \\\leq \lambda^{-\dim_h V} \dens(\mathbf{A}) |L| \lesssim\delta^{1 - \kappa \dim_h V }|L|\,,
    \end{multline*}
    giving \eqref{eq L small} and hence the lemma. 
\end{proof}

Using the last two lemmas, we can prove our main estimate for $T_\mathbf{A}^l$.

\begin{lemma}
    \label{lem l antichain}
    For all $1 \leq q < 2$, there exists a constant $C> 0$ such that 
    $$\|T_\mathbf{A}^l\|_{2\to2} \leq C \delta^{\frac{\kappa}{2q'} - (d + 1) \varepsilon_0}\,,$$
    where $\kappa = 1/(1+ \dim_h V)$.
\end{lemma}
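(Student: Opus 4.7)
The plan is a $TT^*$ argument. Expanding
\[
  \|T_{\mathbf{A}}^l\|_{2\to 2}^2 = \sup_{\|g\|_2=1} \sum_{p,p' \in \mathbf{A}} \langle T_p^{l*}g,\, T_{p'}^{l*}g\rangle,
\]
and invoking the bilinear almost-orthogonality of Lemma~\ref{lem tile TT*} together with the symmetry $(p,p')\leftrightarrow(p',p)$ to reduce to pairs with $s(p')\le s(p)$, the claim reduces to showing that, for every $g$ with $\|g\|_2=1$ and every $1\le q<2$,
\[
  S := \sum_{\substack{p,p' \in \mathbf{A}\\ s(p')\le s(p)}} \frac{\Delta(p,p')^{-1}}{|I(p)|}\,a_p\,a_{p'} \;\lesssim\; \delta^{\kappa/q'},
  \qquad a_p := \int_{E(p)}|g|.
\]
The factor $\delta^{-(2d+2)\varepsilon_0}$ of Lemma~\ref{lem tile TT*} then contributes the $\delta^{-(d+1)\varepsilon_0}$ loss upon taking the square root.

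For each fixed $p$, the inner sum over $p'$ equals $\int |g|\,F_p$, where
\[
  F_p(y) := \sum_{\substack{p'\in\mathbf{A}\\ s(p')\le s(p)}} \Delta(p,p')^{-1}\,\mathbf 1_{E(p')}(y)
\]
is supported in $U_p := \bigcup_{p':\,s(p')\le s(p)} I(p')$. H\"older's inequality with exponents $(q,q')$ combined with Lemma~\ref{lem ant dens} yields
\[
  \int |g|\,F_p \;\le\; \|F_p\|_{L^q}\,\|g\,\mathbf 1_{U_p}\|_{L^{q'}} \;\lesssim\; \delta^{\kappa/q}\,|U_p|^{1/q}\,\|g\,\mathbf 1_{U_p}\|_{L^{q'}}.
\]
To sum this in $p$, the plan is to apply Cauchy--Schwarz and exploit the basic density bound $|E(p)|/|I(p)|\le \delta$. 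Combined with the disjointness of $\{E(p)\}_{p\in\mathbf A}$, this yields the $\ell^2$-estimate $\sum_p a_p^2/|I(p)|\le \delta\|g\|_2^2=\delta$, which contributes a further factor $\delta^{1/2}$ after Cauchy--Schwarz, and reduces the problem to a square-function sum of the form $\sum_p |I(p)|^{-1}(\int|g|\,F_p)^2$.

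The main obstacle lies in this last combinatorial step: to combine the density gain $\delta^{\kappa/q}$ with the $\ell^2$-gain $\delta^{1/2}$ so as to produce exactly $\delta^{\kappa/q'}$, one must control the square-function sum despite the fact that for $q'>2$ the norms $\|g\,\mathbf 1_{U_p}\|_{L^{q'}}$ are not individually dominated by $\|g\|_2$. The plan is to exploit the nested/disjoint structure of the dyadic cubes $I(p')$ for $p'\in\mathbf A$, together possibly with a second application of Lemma~\ref{lem ant dens} at a different exponent, to handle the geometric overlap. The restriction $q<2$ is essential: the endpoint $q=2$ collapses the H\"older pairing to $(2,2)$, and the $L^2$-interpolation of $\|F_p\|_2$ through the trivial bound $F_p\le 1$ yields only the suboptimal $\delta^{\kappa/4}$ with logarithmic losses, whereas $q\in(1,2)$ opens up the flexibility to interpolate between $\|F_p\|_{L^q}$ and $\|F_p\|_{L^\infty}\le 1$ and thereby recover the sharper exponent $\delta^{\kappa/(2q')}$ after the $TT^*$ square-root.
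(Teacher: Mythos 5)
Your $TT^*$ set-up and the use of Lemma~\ref{lem tile TT*} match the paper's, but your H\"older exponent assignment is reversed, and that reversal creates the unresolved obstruction you yourself identify at the end. You pair $F_p$ against $L^q$ and $g$ against $L^{q'}$, which leaves you needing to sum $\|g\,\mathbf 1_{U_p}\|_{L^{q'}}^2$ over $p$ with $q'>2$. That quantity corresponds to a $q'$-maximal function, and $M^{q'}$ is \emph{not} $L^2$-bounded when $q'>2$; the ``nested/disjoint structure'' and a ``second application of Lemma~\ref{lem ant dens}'' do not obviously rescue this, and you do not carry them out. The paper pairs the other way: $g$ goes into $L^q$ and $F_p$ into $L^{q'}$. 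Because $T_p^{l*}g$ is supported in $3I(p)$, only $p'$ with $3I(p)\cap 3I(p')\neq\emptyset$ contribute (a locality reduction you never make explicit — as written your $U_p$ is unbounded), so the cubes $I(p')$ in $F_p$ live inside $5I(p)$. Then $\|g\,\mathbf 1_{5I(p)}\|_{L^q}\lesssim |I(p)|^{1/q}\,M^q|g|(x)$ for $x\in E(p)$, Lemma~\ref{lem ant dens} with exponent $q'$ bounds $\|F_p\|_{L^{q'}}\lesssim \delta^{\kappa/q'}|I(p)|^{1/q'}$, and disjointness of the sets $E(p)$ plus $L^2$-boundedness of $M^q$ (here is where $q<2$ is actually used) close the estimate with no further combinatorics. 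This yields $\delta^{\kappa/q'}$ for $S$ and hence $\delta^{\kappa/(2q')-(d+1)\varepsilon_0}$ after the square root, exactly the stated exponent.

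Two secondary points. Your extra $\delta^{1/2}$ from $\sum_p a_p^2/|I(p)|\le\delta$ is not present in, and not needed for, the paper's argument; chasing it only makes the remaining square-function sum \emph{more} singular, not less. And your closing explanation of why $q<2$ is needed — an interpolation between $\|F_p\|_{L^q}$ and $\|F_p\|_{L^\infty}$ after the $TT^*$ square root — is not what is happening; $q<2$ enters solely so that $M^q$ is $L^2$-bounded, after the H\"older exponents are assigned the right way around.
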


\begin{proof}
    Define for each $p \in \mathbf{A}$ the set
    $$
        \mathbf{A}(p) = \{p' \in \mathbf{A} \, : \, s(p') \leq s(p), \, 3I(p) \cap 3I(p') \neq \emptyset\}\,.
    $$
    Since $T_{p}^{l*} g$ is always supported in $3I(p)$, we have that 
    $$
        \lvert \langle T_{\mathbf{A}}^{l*} g, T_{\mathbf{A}}^{l*} g \rangle \rvert \leq 2\sum_{p \in \mathbf{A}} \sum_{p' \in \mathbf{A}(p)} \lvert\langle T_p^{l*} g, T_{p'}^{l*} g\rangle\rvert\,.
    $$
    By Lemma \ref{lem tile TT*}, this is bounded up to constant factor by
    $$
        \delta^{-(2d + 2)\varepsilon_0} \sum_{p \in \mathbf{A}} \int \mathbf{1}_{E(p)}|g| \frac{1}{|I(p)|} \int |g| \sum_{p' \in \mathbf{A}(p)} \Delta(p, p')^{-1} \mathbf{1}_{E(p')}\,.
    $$
    Cubes $I(p')$ associated to $p' \in \mathbf{A}(p)$ are contained in $5I(p)$, so we may apply Hölder's inequality in the inner integral with $q < 2$ to bound this by a constant times
    $$
        \delta^{-(2d + 2)\varepsilon_0} \sum_{p \in \mathbf{A}} \int \mathbf{1}_{E(p)}|g| M^q |g|\frac{\|\sum_{p' \in \mathbf{A}(p)} \Delta(p, p')^{-1} \mathbf{1}_{E(p')}\|_{L^{q'}}}{|I(p)|^{1/q'}}\,,
    $$
    where $M^q g = M(g^q)^{1/q}$ is the $q$-maximal function.
    Using Lemma \ref{lem ant dens} to estimate the $L^{q'}$-norm, we estimate this by a constant times
    \begin{multline*}
        \delta^{\frac{\kappa}{q'} -(2d + 2)\varepsilon_0} \sum_{p \in \mathbf{A}} \int \mathbf{1}_{E(p)}|g| M^q |g|\\
        \leq \delta^{\frac{\kappa}{q'} -(2d + 2)\varepsilon_0}  \int |g| M^q|g| \lesssim \delta^{\frac{\kappa}{q'} -(2d + 2)\varepsilon_0} \|g\|_2^2\,.
    \end{multline*}
    Here we used disjointness of the sets $E(p)$ and $L^2$-boundedness of $M^q$ for $q < 2$.
    This completes the proof.
\end{proof}

\subsection{The high frequency part}
\label{sub sec high antichain}
Here we estimate the high frequency part $T_\mathbf{A}^h$.

We start by discretizing modulation frequencies. For this, we let $\varepsilon_1 > 0$ be a small positive number, much smaller than $\varepsilon_0$, it will be chosen at the end of this section. We fix finite subsets $M(\omega) \subset \omega$ of each dyadic frequency cube $\omega$, such that the following holds with $\rho = \delta^{\varepsilon_1}$:
\begin{enumerate}
    \item[(i)] $|M(\omega)| \lesssim \rho^{-\dim_h V}$,
    \item[(ii)] For each $N \in \omega$, there exists $N' \in M(\omega)$ with 
    \begin{equation}
        \label{eq par close}
        d_{\text{par}}(N, N') \leq \rho 2^{-s(\omega)}\,,
    \end{equation}
    where $d_{\text{par}}$ denotes the parabolic distance
    $$
        d_{\text{par}}(N, N') = \max\{\max_{1 \leq i \leq d} |N_i - N_i'|, |N_{d+1} - N_{d+1}'|^{1/2}\}\,.
    $$
    \item[(iii)] If $\omega = a + b \omega'$ then $M(\omega) = a + bM(\omega')$.
\end{enumerate}
For each tile $p$, we partition the set $E(p)$ into $E(p^c)$, $c \in M(\omega(p))$, such that for all $x \in E(p^c)$ 
\begin{equation}
    \label{eq par close 2}
    d_{\text{par}}(N(x), c) \leq \delta^{\varepsilon_1} 2^{-s(p)}\,.
\end{equation}
We define for a tile $p$ and $c \in M(\omega(p))$ the operator
$$
    T_{p}^{h, c} f(x) := \mathbf{1}_{E(p^c)} \int f(x-y)e^{ic \cdot y} \, \mathrm{d}\mu_{s(p)}^h(y)\,,
$$
and we define also
$$
    T_{\mathbf{A}}^{h,d} = \sum_{p \in \mathbf{A}} \sum_{c \in M(\omega(p))} T_p^{h,c}\,.
$$
By \eqref{eq par close 2}, we have for $x \in E(p^c)$ and all $y$ in the support of $\mu_{s(p)}^h$ that
$$
    |c \cdot y - N(x) \cdot y | \leq \delta^{\varepsilon_1}\,,
$$
hence
$$
    |T_p f(x) - \sum_{c \in M(\omega(p))} T_{p}^{h,c} f(x)| \leq \delta^{\varepsilon_1} \mathbf{1}_{E(p)}(x) \int |f(x-y)| \,\mathrm{d} \lvert \mu_{s(p)}^h \rvert(y)\,.
$$
Summing this over all $p \in \mathbf{A}$ and using that the corresponding sets $E(p)$ are disjoint, it follows that $|T_\mathbf{A}^h - T_{\mathbf{A}}^{h,d}|$ is dominated by $\delta^{\varepsilon_1}$ times a mollified maximal average along the parabola. Thus
\begin{equation}
\label{eq error}
    \| T_{\mathbf{A}}^h - T_\mathbf{A}^{h,d}\|_{2\to2} \lesssim \delta^{\varepsilon_1}\,.
\end{equation}
It now remains to estimate the discretized operator $T_{\mathbf{A}}^{h,d}$. By disjointness of the sets $E(p^c)$ and the third property of the sets $M(\omega)$, we have 
\begin{equation}
    \label{eq squaref dom}
    T_{\mathbf{A}}^{h,d} f(x) \leq S f(x)\,,
\end{equation}
where the square function $Sf$ is defined depending on $V$ as follows: In the case $V = \R^{d-1} \times \{0\}^2$ it is defined by
\begin{equation}
    \label{eq squaref 1}
    (S f(x))^2 = \sum_{s \in \mathbb{Z}} \sum_{N \in \mathbb{Z}^{d-1}\times\{0\}^2} \sum_{c \in M(\omega_0)} |f * (e^{i\delta_s(c + N) \cdot y} \mu_s^h(y))(x)|^2
\end{equation}
with $\omega_0 = [0,1)^{d-1} \times \{0\}^2$, and in the case $V = \{0\}^d \times \R$ it is defined by
\begin{equation}
    \label{eq squaref 2}
    (S f(x))^2 = \sum_{s\in\mathbb{Z}} \sum_{N \in \{0\}^d\times\mathbb{Z}} \sum_{c \in M(\omega_0)} |f * (e^{i\delta_s(c + N) \cdot y} \mu_s^h(y))(x)|^2
\end{equation}
with $\omega_0 = \{0\}^d \times [0,1)$.

\begin{lemma}
    \label{lem squaref}
    There exist $C >0$ such that the following holds.
    Let $S$ be defined by \eqref{eq squaref 1}  or by \eqref{eq squaref 2}. Then we have
    $$
        \|S\|_{2 \to 2} \leq C \delta^{\frac{2}{d+2} \varepsilon_0 - \frac{\dim_h V}{2}\varepsilon_1}\,.
    $$
\end{lemma}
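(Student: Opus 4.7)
The plan is to use Plancherel to reduce the $L^2\to L^2$ bound for $S$ to an $L^\infty$ estimate for the Fourier multiplier
\[
    \Phi(\xi) := \sum_{s\in\mathbb{Z}}\sum_{N,\,c}\bigl\lvert\hat\mu_s^h(\xi-\delta_s(c+N))\bigr\rvert^2\,.
\]
Since $(Sf)^2$ is a sum of squared convolutions, $\|Sf\|_2^2=\int|\hat f(\xi)|^2\Phi(\xi)\,\mathrm{d}\xi$, and it suffices to show $\|\Phi\|_\infty \lesssim \delta^{\frac{4}{d+2}\varepsilon_0-\dim_h V\cdot\varepsilon_1}$.

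For each fixed scale $s$, the points $\delta_s(c+N)$ form a parabolic grid in $V$ of spacing $2^s\rho$, where $\rho=\delta^{\varepsilon_1}$. I would use Poisson summation on this grid, exploiting the rapid decay of $\mu_s^h$ in physical space, to compare the sum over $(c,N)$ with $(2^s\rho)^{-\dim_h V}\int_V|\hat\mu_s^h(\xi-\zeta)|^2\,\mathrm{d}\zeta$, absorbing the sampling error into analogous remainder terms. A Plancherel identity in the $V$-directions then expresses the latter integral as an $L^2$-norm (in the variables complementary to $V$) of the partial Fourier transform of $\mu_s^h$ in the $V^\perp$-variables. The resulting slice integral is computed explicitly via the paraboloid parametrization and reduces to a spherical-average type expression.

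The essential pointwise ingredients are: the stationary phase bound $|\hat\mu_s(\xi)|\lesssim (1+2^s|\xi'|+2^{2s}|\xi_{d+1}|)^{-d/2}$, concentrated on the parabolic ridge $|\xi'|\sim 2^s|\xi_{d+1}|$ with rapid decay off it; the cancellation bound $|\hat\mu_s(\xi)|\lesssim 2^s|\xi'|+2^{2s}|\xi_{d+1}|$ near the origin coming from $\int K_s=0$; and the extra factor $|1-\hat\varphi^d(2^s\delta^{\varepsilon_0}\xi')|^2$ or $|1-\hat\varphi^1(2^{2s}\delta^{2\varepsilon_0}\xi_{d+1})|^2$ coming from the high-frequency cutoff defining $\mu_s^h$. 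The latter forces the relevant $V^\perp$-component of $\xi$ to exceed $\delta^{-\varepsilon_0}$ in parabolic norm, and is the source of the $\delta^{\varepsilon_0}$ gain.

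The last step is to sum over $s$. I would partition $\mathbb{Z}$ dyadically according to the relative sizes of $2^s|\xi'|$, $2^{2s}|\xi_{d+1}|$ and $\delta^{-\varepsilon_0}$, balancing the stationary phase decay against the frequency-cutoff gain in each region. The main obstacle will be the precise bookkeeping near the stationary phase ridge, where $\hat\mu_s$ has the slowest decay and where only the cutoff factor produces the required smallness; outside the ridge one has rapid decay from integration by parts in the phase, and near the origin the cancellation bound takes over. The exponent $\frac{2}{d+2}$ in the target estimate should arise from the optimal balance across these regions, reflecting the parabolic dimension $d+2$ of $\R^{d+1}$ together with the stationary phase decay rate $d/2$.
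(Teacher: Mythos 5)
The proposal starts with the same Plancherel reduction as the paper, and the key Fourier-side ingredients (stationary-phase decay of $\hat\mu_s$, the cutoff factor $|1-\hat\varphi(\cdot)|^2$ from the smoothing) are essentially the ones the paper uses, so this is the same kind of argument at heart. Where you diverge is in how the lattice sum is handled, and there the proposal has a conceptual misstep and an unnecessary detour. Poisson summation on ``the grid $\delta_s(c+N)$'' is not available: $\{c+N : c\in M(\omega_0),\ N\in V\cap\mathbb Z^{d+1}\}$ is \emph{not} a lattice, since $M(\omega_0)$ is only an arbitrary $\rho$-net subject to conditions (i)--(iii). The correct (and much simpler) move, which is what the paper does, is to bound the $c$-sum trivially by the cardinality $\lesssim \rho^{-\dim_h V}$ and then estimate the $N$-lattice sum directly from the pointwise decay of $\hat\mu_0^h$. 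The detour through a partial Plancherel in the $V$-variables and an explicit slice computation via the paraboloid parametrization is likewise unnecessary, and you do not carry it out, so it cannot be verified as stated. Once the Poisson/Plancherel detour is replaced by direct lattice summation, the remaining ingredients you list do close the argument, and in fact your product-form bound $|\hat\mu_s^h|^2\lesssim\min\{1,|\delta^{2\varepsilon_0}2^{2s}\xi_{d+1}|^2\}\,(1+2^s|\xi'|+2^{2s}|\xi_{d+1}|)^{-d}$ is a genuinely sharper input than the minimum appearing in \eqref{eq muoh1}, and gives a \emph{stronger} conclusion, namely $\delta^{2\varepsilon_0-\dim_h V\,\varepsilon_1}$ for the multiplier in case \eqref{eq squaref 1} (and at least $\delta^{\varepsilon_0-\dim_h V\,\varepsilon_1}$ in case \eqref{eq squaref 2}), better than the paper's $\delta^{\frac{4}{d+2}\varepsilon_0-\dim_h V\,\varepsilon_1}$. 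In particular your closing heuristic that the exponent $\frac{2}{d+2}$ ``arises from the optimal balance'' is not accurate: the balance with your bounds produces a strictly better exponent; $\frac{2}{d+2}$ is what emerges from the paper's lossier interpolation $\min\{a,b\}\le(a^3b^{d-1})^{1/(d+2)}$. One genuine point of care: the ridge-plus-off-ridge envelope $(1+2^s|\xi'|+2^{2s}|\xi_{d+1}|)^{-d/2}$ requires $m$ integrations by parts off the ridge to match the $-d/2$ power, and $m>d/2$ is exactly the minimal regularity assumed in Theorem~\ref{main Lp}; so you should claim exactly the $-d/2$ power and no more, and also observe that the $N$-sum then still converges because $2m > d-1$.
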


\begin{proof}
    We first treat the case \eqref{eq squaref 1}.
    We have 
    $$
        \hat \mu_0(\xi) = \int_{1/8 < |y| < 1/3} e^{i (\xi' \cdot y + \xi_{d+1}|y|^2)} K_0(y) \, \mathrm{d}y\,.
    $$
    Thus we obtain from the method of stationary phase (see e.g. \cite{H90}, Lemma 7.7.3) and \eqref{eq ks size}:
    \begin{equation}
        \label{eq stat phase}
        |\hat{\mu}_0(\xi)|^2 \lesssim \min\{1, |\xi_{d+1}|^{-d}\}\,.
    \end{equation}
    Combining this with \eqref{eq smoothen def 1} yields
    \begin{equation}
    \label{eq muoh1}
        |\hat{\mu}_0^h(\xi)|^2 = |(1 - \hat \varphi^1(\delta^{2\varepsilon_0}\xi_{d+1})) \hat \mu_0(\xi)|^2 \lesssim \min\{ |\delta^{2\varepsilon_0} \xi_{d+1}|^2, |\xi_{d+1}|^{-d}\}\,. 
    \end{equation}
    Summing up \eqref{eq muoh1} gives
    \begin{equation}
    \label{eq sumh1}
        \sum_{\substack{N \in \mathbb{Z}^{d-1}\times\{0\}^2\\ |\xi' + c' + N'| \leq |\xi_{d+1}|}} |\hat \mu_0^h(\xi + c + N)|^2 \lesssim |\xi_{d+1}|^{d-1} \min\{ |\delta^{2\varepsilon_0} \xi_{d+1}|^2, |\xi_{d+1}|^{-d}\}\,.
    \end{equation}
    If $|\xi'| > |\xi_{d+1}|$ then we have 
    $$
        |\nabla(\xi' \cdot y + \xi_{d+1}|y|^2)| = |\xi' + \xi_{d+1} y| > \frac{1}{2}|\xi'|\,.
    $$
    Integrating by parts (see e.g. \cite{H90}, Theorem 7.7.1), we obtain 
    \begin{equation}
        \label{eq intparts}
        |\hat \mu_0(\xi)|^2 \lesssim \min\{1, |\xi'|^{-d-1}\}\,,
    \end{equation}
    and hence 
    \begin{equation}
    \label{eq muoh2}
        |\hat{\mu}_0^h(\xi)|^2 \lesssim  \min\{ |\delta^{2\varepsilon_0}\xi_{d+1}|  \min\{ 1,  |\xi'|^{-d-1}\}, |\xi'|^{-d-1}\}\,.
    \end{equation}
    Summing up estimate \eqref{eq muoh2} gives
    \begin{equation}
    \label{eq sumh2}
        \sum_{\substack{N \in \mathbb{Z}^{d-1}\times\{0\}^2\\ |\xi' + c' + N'| > |\xi_{d+1}|}} |\hat \mu_0^h(\xi + c + N)|^2 \lesssim \min\{ |\delta^{2\varepsilon_0} \xi_{d+1}|^2, |\xi_{d+1}|^{-1}\}\,.
    \end{equation}
    Combining \eqref{eq sumh1} and \eqref{eq sumh2} and using $\min\{a,b\} \leq \min\{ (a^3b^{d-1})^{1/(d+2)}, b\}$ yields
    $$
        \sum_{N \in \mathbb{Z}^{d-1}\times\{0\}^2} |\hat \mu_0^h(\xi + c + N)|^2 \lesssim \min\{\delta^{2\varepsilon_0/d} |\xi_{d+1}|^2,\, |\xi_{d+1}|^{-1}\}\,.
    $$
    Using dilation invariance of all assumptions on $K$ this implies
    $$
        \sum_{N \in \mathbb{Z}^{d-1}\times\{0\}^2} |\hat \mu_s^h(\xi + \delta_{-s}(c + N))|^2 \lesssim \min\{\delta^{\frac{12}{d+2}\varepsilon_0} |2^{2s}\xi_{d+1}|^2,\, |2^{2s}\xi_{d+1}|^{-1}\}\,.
    $$
    Summing in $s$, and also summing in the $\lesssim \delta^{-\varepsilon_1 \dim_h V}$ choices of $c$, we obtain 
    $$
        \sum_{s\in \mathbb{Z}} \sum_{N \in \mathbb{Z}^{d-1}\times\{0\}^2} \sum_{c \in M(\omega_0)}  |\hat \mu_s^h(\xi + \delta_{-s}(c + N))|^2 \lesssim \delta^{\frac{4}{d+2}\varepsilon_0 - \varepsilon_1 \dim_h V}\,.
    $$
    This completes the proof in the case \eqref{eq squaref 1} by Plancherel.
    
    Now we deal with the vertical modulation case \eqref{eq squaref 2}. The stationary phase estimate \eqref{eq stat phase} combined with \eqref{eq smoothen def 2} yields
    $$
        |\hat \mu_0^h(\xi)|^2 \lesssim \min\{1, |\xi_{d+1}|^{-d}\}  |\delta^{\varepsilon_0} \xi'|^2\,,
    $$
    and hence
    $$
        \sum_{N \in \{0\}^d \times \mathbb{Z}} |\hat \mu_0^h(\xi + c + N)|^2 \lesssim  |\delta^{\varepsilon_0} \xi'|^2\,.
    $$
    Combining the estimates \eqref{eq intparts} in the range $|\xi'| > |(\xi + c + N)_{d+1}|$ and \eqref{eq stat phase} in the range $|\xi'| \leq |(\xi + c + N)_{d+1}|$ gives 
    $$
        \sum_{N \in \{0\}^d \times \mathbb{Z}} |\hat \mu_0^h(\xi + c + N)|^2 \lesssim  |\xi'|^{1-d}\,.
    $$
    Therefore we have 
    $$
        \sum_{N \in \{0\}^d \times \mathbb{Z}} |\hat \mu_0^h(\xi + c + N)|^2 \lesssim \min\{\delta^{2\varepsilon_0} |\xi'|^{2}, |\xi'|^{1-d}\}\,.
    $$
    As before, this implies the square function estimate (with better dependence on $\delta$) using dilation invariance of all assumptions to sum in $s$, the bound on the number of summands to sum in $c$, and then Plancherel.
\end{proof}

Combining the estimate for $T_\mathbf{A}^l$ in Lemma \ref{lem l antichain}, for $T_{\mathbf{A}}^h -T_\mathbf{A}^{h,d}$ in \eqref{eq error} and for $T_\mathbf{A}^{h,d}$ in \eqref{eq squaref dom} and Lemma \ref{lem squaref}, and choosing $\varepsilon_0$ and then $\varepsilon_1$ sufficiently small, we obtain Proposition \ref{prop antichain}.

\section{Trees: Proof of Proposition \ref{prop tree}}
\label{sec tree}

Here we prove a mild generalization of Proposition \ref{prop tree}. We will estimate what we call generalized tree operators, with general kernels satisfying conditions described below. We need the estimates in Section \ref{sec forest} both for the singular Radon transform and for a version of it with a smoothened kernel.

\subsection{General setup}
\label{sub sec setup tree}
Let in the following $\mu_s, \lambda_s$, $s \in \mathbb{Z}$ be any finite measures satisfying the following conditions:
\begin{enumerate}
    \item[i)]  $\mu_s$, $\lambda_s$ are supported in $\delta_s(B(0,1/2))$,
    \item[ii)] $|\mu_s| \leq \lambda_s$,
    \item[iii)] $\mu_s(\R^{d+1}) = 0$,
    \item[iv)] we have
        \begin{equation}
        \label{eq fourier decay}
        \max\{|\widehat{D_{-s} \lambda_s}(\xi))|, |\widehat{ D_{-s} \mu_s}(\xi)|\} \leq |\xi|^{-1}\,,
        \end{equation}
        where $D_{-s} \mu_s := 2^{-(d+2)s} \mu_s \circ \delta_{-s}$,
    \item[v)] and there exist $\gamma > 0$ and $p, q < 2$ such that convolution with $D_{-s} \lambda_s$ and $D_{-s} \mu_s$ is bounded from $L^{p}$ to $L^{q' + \gamma}$ with norm at most $1$ for each $s$.
\end{enumerate}

Relevant for the proof of Theorem \ref{main Lp} are the measures defined by 
\begin{equation}
    \label{eq mus def}
    \int f \, \mathrm{d}\mu_s = \int_{\R^d} f(y,|y|^2) K_s(y) \, \mathrm{d}y\,,
\end{equation}
\begin{equation}
    \label{eq las def}
    \int f \, \mathrm{d}\lambda_s = \int_{\R^d} f(y,|y|^2) 2^{-ds}\eta(2^{-s} y) \, \mathrm{d}y\,,
\end{equation}
and smoothened versions thereof. They clearly satisfy conditions i) to iii), condition iv) follows from standard stationary phase estimates (e.g. \cite{H90}, Lemma 7.7.3) and for condition v) one can take any $(p,q)$ such that $(\frac{1}{p}, \frac{1}{q})$ is in the interior of the convex hull of $(1,0), (0,1), (\frac{d+1}{d+2}, \frac{d+1}{d+2})$ (see \cite{Littman1973}).

\subsection{Generalized tree operators}
\label{sub sec intro tree}
With the choice of $\mu_s$ from \eqref{eq mus def}, the tile operators defined in Section \ref{sec discretization} can be written as
$$
    T_p f(x) = \mathbf{1}_{E(p)} (x) \int f(x-y)e^{iN(x) \cdot y} \mathrm{d}\mu_s(y)\,,
$$
and tree operators can be written as
$$
    T_{\mathbf{T}} f(x) = \sum_{p \in \mathbf{T}} T_p f(x) = \sum_{s \in \sigma(x)} \int f(x-y)e^{iN(x) \cdot y} \mathrm{d}\mu_s(y)\,,
$$
where $\sigma(x) := \{s(p) \, : \, p \in \mathbf{T}, x \in E(p)\}$.
Since $\mathbf{T}$ is a subset of the set of all admissible tiles, the set $\sigma(x)$ is always contained in 
$$
    J = J(\mathbf{T}) := \{s \in \mathbb{Z} \, : \, \exists \omega \ \text{admissible,} \,  s(\omega) = s,\, \omega(\mathbf{T}) \subset \omega\}\,,
$$
and convexity of the tree $\mathbf{T}$ implies that there are $\underline{\sigma}(x) \leq \overline{\sigma}(x)$ with 
$$
    \sigma(x) = J \cap [\underline{\sigma}(x), \overline{\sigma}(x)]\,.
$$

Motivated by this, we define a generalized tree to be a pair $(\mathbf{T}, \sigma')$, where $\mathbf{T}$ is a tree and $\sigma'$ is a function associating to each $x$ a set
$$\sigma'(x)  = J \cap [\underline{\sigma}'(x), \overline{\sigma}'(x)] \subset \sigma(x)\,.$$
The generalized tree operator associated to $(\mathbf{T}, \sigma')$ is defined by
$$
    T_{\mathbf{T},\sigma'}f(x) = \sum_{s \in \sigma'(x)} \int f(x-y)e^{iN(x) \cdot y} \mathrm{d}\mu_s(y)\,.
$$
The above discussion shows that this includes in particular the tree operators defined in Section \ref{sec outline}, by choosing $\mu_s$ as in \eqref{eq mus def} and $\sigma'(x) = \sigma(x)$ for each $x$. 

\subsection{Single tree estimate}
\label{sub sec sing tree}

We now dominate the generalized tree operators $T_{\mathbf{T},\sigma'}$ by a sum of two simpler operators $T_{*\mathbf{T},\sigma'}$ and $M_{\mathbf{T}}$, resembling a maximally truncated singular integral and a maximal average along the paraboloid.

Define the unmodulated operator associated to $(\mathbf{T},\sigma')$ by
\begin{align*}
    T_{*\mathbf{T},\sigma'} f(x) := \sum_{s \in \sigma'(x)} f * \mu_s(x) = \sum_{s = \underline{\sigma}'(x)}^{\overline{\sigma}'(x)} f*(\mathbf{1}_J(s) \mu_s)(x)\,.
\end{align*}
This is a maximally truncated singular integral along the paraboloid, and it follows from standard square function arguments as in \cite{Stein+1978} that $T_{*\mathbf{T}}$ is bounded on $L^2$.

Define also the maximal average $M_{\mathbf{T}}$ associated to $\mathbf{T}$:
$$
    M_{\mathbf{T}} f(x) := \sup_{s \in \sigma(x)} |f| * \lambda_s(x)\,.
$$
The operator $M_{\mathbf{T}}$ is bounded above pointwise by the maximal average associated to $(\lambda_s)_{s\in \mathbb{Z}}$, which is bounded on $L^2$ under our assumptions.

We have the following pointwise estimate for $T_{\mathbf{T},\sigma'}$ in terms of $T_{*\mathbf{T},\sigma'}$ and $M_{\mathbf{T}}$. 

\begin{lemma}
    \label{lem tree reduction}
    There exists $C > 0$ such that for each generalized tree $(\mathbf{T},\sigma')$ 
    $$
        |T_{\mathbf{T},\sigma'} f(x)| \leq |T_{*\mathbf{T},\sigma'}(e^{-iN(\mathbf{T})(y)} f(y))(x)| + C M_{\mathbf{T}} f(x)\,,
    $$
    where $N(\mathbf{T})$ is the smallest element, in lexicographic order, of $\omega(\mathbf{T})$. 
\end{lemma}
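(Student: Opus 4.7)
First I would transfer the tree-top modulation $N(\mathbf{T})$ onto $f$ by setting $g(z):=e^{-iN(\mathbf{T})\cdot z}f(z)$. The identity $f(x-y)e^{iN(\mathbf{T})\cdot y}=e^{iN(\mathbf{T})\cdot x}g(x-y)$ lets me write
$$T_{\mathbf{T},\sigma'}f(x)=e^{iN(\mathbf{T})\cdot x}\sum_{s\in\sigma'(x)}\int g(x-y)\,e^{i(N(x)-N(\mathbf{T}))\cdot y}\,\mathrm{d}\mu_s(y).$$
Adding and subtracting $1$ inside the inner exponential splits the right-hand side as $e^{iN(\mathbf{T})\cdot x}(T_{*\mathbf{T},\sigma'}g(x)+\mathcal{E}(x))$ with
$$\mathcal{E}(x)=\sum_{s\in\sigma'(x)}\int g(x-y)\big(e^{i(N(x)-N(\mathbf{T}))\cdot y}-1\big)\,\mathrm{d}\mu_s(y).$$
Since $|g|=|f|$, it only remains to show $|\mathcal{E}(x)|\lesssim M_{\mathbf{T}}f(x)$.

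Next, I would extract the size of $N':=N(x)-N(\mathbf{T})$ from the tree geometry. For each $s\in\sigma'(x)$ there is a unique $p\in\mathbf{T}$ with $s(p)=s$ and $x\in E(p)$, and $\omega(\mathbf{T})\subset\omega(p)$ places both $N(\mathbf{T})$ and $N(x)$ in a common frequency cube of scale $s$. In the case $V=\R^{d-1}\times\{0\}^2$ this gives $|N'_i|\leq 2^{-s}$ for $i=1,\dots,d-1$ (and zero otherwise); in the case $V=\{0\}^d\times\R$ it gives $|N'_{d+1}|\leq 2^{-2s}$ (with the other entries zero). Setting $s_{\max}:=\max\sigma'(x)$ and applying the tightest such bound, $|N'|$ is controlled by the size of $\omega(p_{s_{\max}})$. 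Together with $\supp\mu_s\subset\delta_s(B(0,1/2))$, this yields the geometric phase bound
$$|N'\cdot y|\lesssim 2^{\kappa(s-s_{\max})},\qquad y\in\supp\mu_s,$$
with $\kappa=1$ when $V=\R^{d-1}\times\{0\}^2$ and $\kappa=2$ when $V=\{0\}^d\times\R$.

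Finally, using $|e^{i\theta}-1|\leq|\theta|$, $|\mu_s|\leq\lambda_s$ from condition ii), and $|g|=|f|$, each term in $\mathcal{E}$ is bounded by
$$\left|\int g(x-y)(e^{iN'\cdot y}-1)\,\mathrm{d}\mu_s(y)\right|\lesssim 2^{\kappa(s-s_{\max})}(|f|*\lambda_s)(x)\leq 2^{\kappa(s-s_{\max})}M_{\mathbf{T}}f(x),$$
and summing the geometric series $\sum_{s\in\sigma'(x),\,s\leq s_{\max}}2^{\kappa(s-s_{\max})}=O(1)$ produces $|\mathcal{E}(x)|\lesssim M_{\mathbf{T}}f(x)$, as required. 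The key point, and essentially the only place where the tree and paraboloid structure come in, is that the parabolic diameter of $\omega(p)$ is exactly the reciprocal of the support scale of $\mu_s$: this upgrades the uniform $O(1)$ bound $|N'\cdot y|\lesssim 1$ to a geometric-in-$(s-s_{\max})$ bound, which is what makes the series summable without invoking any cancellation beyond the trivial $|e^{i\theta}-1|\leq|\theta|$.
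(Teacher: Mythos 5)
Your proof is correct and follows essentially the same route as the paper: modulate by $N(\mathbf{T})$ to factor out $T_{*\mathbf{T},\sigma'}(e^{-iN(\mathbf{T})\cdot(\cdot)}f)$, then bound the remainder using $|e^{i\theta}-1|\le|\theta|$, the fact that for each $s\in\sigma'(x)$ both $N(x)$ and $N(\mathbf{T})$ lie in a common frequency cube of scale $s$ (so in particular one of the tightest scale $\overline\sigma'(x)$), the support condition on $\mu_s$, the domination $|\mu_s|\le\lambda_s$, and summability of the resulting geometric series against $M_{\mathbf{T}}f$. The only cosmetic differences are that the paper sums over all tiles in $\mathbf{T}$ (hence over $\sigma(x)\supset\sigma'(x)$) and uses the uniform rate $2^{-k}$ in both cases of $V$, whereas you sum only over $\sigma'(x)$ and keep the sharper rate $\kappa=2$ in the vertical case --- neither change affects the outcome.
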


\begin{proof}
    Suppose that $p \in \mathbf{T}$ with $s(p) = \max \sigma(x) - k$. Then there exists some $p' \in \mathbf{T}$ with $s(p') = s(p) + k$ and $x \in E(p')$. Since $N(x) \in \omega(p')$ and $N(\mathbf{T}) \in \omega(p')$,  it follows that
    $$
        |(N(\mathbf{T}) - N(x)) \cdot y| \leq d 2^{-k}
    $$
    for all $y$ in the support of $\mu_{s(p)}$.
    Hence we have 
    \begin{multline*}
        \left|T_{p}f(x)
        -e^{iN(\mathbf{T})\cdot x} \mathbf{1}_{E(p)}(x) \int f(x - y)e^{-iN(\mathbf{T})\cdot (x - y)} \mathrm{d}\mu_{s(p)}(y)\right|\\
        = \left|\mathbf{1}_{E(p)}(x) \int f(x-y)(e^{iN(x)\cdot y} - e^{iN(\mathbf{T})\cdot y})\mathrm{d}\mu_{s(p)}(y)\right|\\
        \lesssim 2^{-k} \mathbf{1}_{E(p)}(x) |f|*\lambda_s(x)\,,
    \end{multline*}
    using that $|\mu_s| \leq \lambda_s$ and $|e^{iN(x)\cdot y} - e^{iN(\mathbf{T})\cdot y}| \lesssim 2^{-k}$ on the support of $\mu_{s(p)}$.
    Summing over all tiles $p \in \mathbf{T}$, we obtain:
    \begin{multline*}
        |T_{\mathbf{T},\sigma'} f(x) - e^{N(\mathbf{T})\cdot x} T_{*\mathbf{T},\sigma'}(e^{-N(\mathbf{T})\cdot y} f(y))(x)|\\
        \leq \sum_{p \in \mathbf{T}} \left| T_{p}f(x)
        -e^{iN(\mathbf{T})\cdot x} \mathbf{1}_{E(p)}(x) \int f(x - y)e^{-iN(\mathbf{T})\cdot (x - y)} \mathrm{d}\mu_{s(p)}(y)\right|\\
        \lesssim \sum_{k \geq 0} 2^{-k} \sum_{\substack{p \in \mathbf{T}\\ s(p) = \max\sigma(x) - k}} \mathbf{1}_{E(p)}(x) |f|*\lambda_s(x)\,.
    \end{multline*}
    By disjointness of the sets $E(p)$ for tiles $p$ of a fixed scale, the inner sum is bounded by $M_\mathbf{T}$. This completes the proof.
\end{proof}

\subsection{Low density trees and sparse bounds}
\label{sub sec sparse}

Lemma \ref{lem tree reduction} implies that the operators $T_{\mathbf{T},\sigma'}$ are bounded on $L^2$ uniformly over all generalized trees $(\mathbf{T},\sigma')$. We will now improve this estimate for trees of small density $\delta$, using sparse bounds for the operators $T_{*\mathbf{T},\sigma'}$ and $M_{\mathbf{T}}$.
These sparse bounds are variants of bounds for prototypical singular Radon transforms that were shown by Oberlin \cite{Oberlin2019}.
Our setting is slightly more general than in \cite{Oberlin2019}, and we need a more precise estimate, but it still follows from Oberlin's proof with only minor modifications. 

A collection $\mathcal{S}$ of dyadic cubes is called sparse if for each $Q \in \mathcal{S}$ there exists a subset $U(Q) \subset Q$ with $|U(Q)| \geq |Q|/2$, such that the sets $U(Q)$ are pairwise disjoint.
For every cube $Q$ and $p \geq 1$, we denote by
$$
    \langle f \rangle_{Q,p} := \left(\frac{1}{|Q|} \int_Q |f|^p \, \mathrm{d}x \right)^{1/p}
$$
the $p$-average of a function $f$ over $Q$. Finally, given a dyadic cube $Q$ and a tree $\mathbf{T}$, we define 
$$
    E(Q) = E(\mathbf{T}, Q) := \bigcup_{\substack{p \in \mathbf{T}\\ I(p) \subset 3Q}} E(p)\,.
$$

\begin{lemma}[\cite{Oberlin2019}]
    \label{lem sparse}
    Suppose that $(p,q)$ are as in condition v) in Subsection \ref{sub sec setup tree}.
    Then there exists a constant $C > 0$ such that for every generalized tree $(\mathbf{T},\sigma')$ and all $f, g$ there exists a sparse collection of cubes $\mathcal{S} \subset \mathbf{D}$ such that 
    \begin{equation}
        \label{eq sparse goal}
        \left\lvert\int T_{*\mathbf{T},\sigma'} f(x) g(x) \, \mathrm{d}x \right\rvert \leq C \sum_{Q \in \mathcal{S}} |Q| \langle f \rangle_{Q, p} \langle \mathbf{1}_{E(Q)} g  \rangle_{3Q, q}\,.
    \end{equation}
    The same statement holds with $T_{*\mathbf{T},\sigma'}$ replaced by $M_{\mathbf{T}}$.
\end{lemma}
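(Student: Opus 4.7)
The plan is to follow Oberlin's approach \cite{Oberlin2019} of building the sparse collection $\mathcal{S}$ recursively via a Calderón--Zygmund-style stopping-time decomposition, with the $L^{p} \to L^{q'+\gamma}$-improving hypothesis in condition v) providing the crucial room needed for iteration. The only genuinely new features compared with \cite{Oberlin2019} are the dependence of the truncation scales $\sigma'(x)$ on $x$ and the appearance of the set $E(Q)$ on the right-hand side of \eqref{eq sparse goal}.

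After reducing to $f,g$ supported in a common large dyadic cube $Q_{0}$, I would construct $\mathcal{S}$ inductively starting from $Q_{0}$. At each $Q \in \mathcal{S}$, the stopping children are the maximal dyadic subcubes $Q' \subsetneq Q$ for which either
\[
    \langle f \rangle_{Q',p} > C_{1} \langle f \rangle_{Q,p}
    \quad \text{or} \quad
    \langle \mathbf{1}_{E(Q')} g \rangle_{3Q',q} > C_{1} \langle \mathbf{1}_{E(Q)} g \rangle_{3Q,q}\,.
\]
For $C_{1}$ sufficiently large, the weak-type estimates for the dyadic maximal function ensure that the union of the stopping children occupies at most $|Q|/2$ of $Q$, so setting $U(Q)$ to be the complement of this union within $Q$ yields $|U(Q)| \ge |Q|/2$ and hence sparseness of $\mathcal{S}$.

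The heart of the argument is the single-cube estimate
\[
    \left\lvert \int_{Q} T_{*\mathbf{T},\sigma'}^{Q} f \cdot g \, \mathrm{d}x \right\rvert
    \lesssim |Q|\, \langle f \rangle_{Q,p}\, \langle \mathbf{1}_{E(Q)} g \rangle_{3Q,q}\,,
\]
where $T_{*\mathbf{T},\sigma'}^{Q}$ retains only scales $s$ with $2^{s} \lesssim |Q|^{1/(d+2)}$. To prove this I would first replace $f$ by $f\mathbf{1}_{3Q}$: by the support condition i) on $\mu_{s}$, the discarded piece cannot contribute to the integral over $Q$ at such scales. For each such $s$, rescaling by $\delta_{-s}$ and applying condition v) gives a local $L^{p} \to L^{q'+\gamma}$ bound on $f\mathbf{1}_{3Q}*\mu_{s}$; the mean zero hypothesis iii) combined with the Fourier decay iv) ensures the geometric summation in $s$ converges. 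Hölder's inequality then controls the sum by $|Q|\, \langle f \rangle_{Q,p}\, \langle \mathbf{1}_{E(Q)} g \rangle_{3Q,\,q'+\gamma}$, where the appearance of $\mathbf{1}_{E(Q)}$ follows from the observation that for such scales any contributing tile satisfies $I(p) \subset 3Q$, so that the integrand vanishes off $E(Q)$. To convert the $(q'+\gamma)$-average into the desired $q$-average, I interpolate against the stopping bound $\langle \mathbf{1}_{E(Q')} g\rangle_{3Q',q} \le C_{1}\langle \mathbf{1}_{E(Q)} g\rangle_{3Q,q}$ valid on $U(Q)$; it is precisely the gain $\gamma > 0$ that makes this exchange possible.

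The remaining scales $2^{s} \gtrsim |Q|^{1/(d+2)}$ are absorbed into the ancestors of $Q$ in the recursion, so iterating the single-cube estimate down the tree of $\mathcal{S}$ produces \eqref{eq sparse goal}. For $M_{\mathbf{T}}$, the same procedure applies once the supremum is linearized by a measurable selection of the maximizing scale; condition v) holds also for $\lambda_{s}$, and the positivity of $\lambda_{s}$ simplifies the single-scale step. The main obstacle throughout is bookkeeping: one must verify that for $x \in U(Q)$ every scale in $\sigma'(x)$ exceeding $|Q|^{1/(d+2)}$ has already been accounted for at an ancestor cube, which follows from the fact that $\sigma'(x)$ is an interval in $J(\mathbf{T})$ together with the geometry of $\mathbf{T}$ and the support property i). Beyond this adaptation, the remaining steps are as in Oberlin's proof.
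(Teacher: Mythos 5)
Your proposal is essentially the paper's approach: the decisive new observation — that for any tile contributing to $T_Q$ one has $s(p) \le s(Q)$ and $3I(p) \cap Q \neq \emptyset$, hence $I(p) \subset 3Q$, so $T_Q = \mathbf{1}_{E(Q)}T_Q$ and $\langle T_Qf, g\rangle = \langle T_Qf, \mathbf{1}_{E(Q)}g\rangle$ — is exactly what the paper's proof hinges on, and the rest is Oberlin's construction modified to anisotropic, $s$-dependent dilates. Two small points worth flagging. First, the paper's route is cleaner than yours in one respect: rather than rebuilding the stopping time with $\mathbf{1}_{E(\cdot)}$ already in the stopping criterion (which forces you to verify the monotonicity $E(Q') \subset E(Q)$ for dyadic $Q' \subsetneq Q$ — true, since $3Q' \subset 3Q$, but an extra check), it runs Oberlin's recursion \eqref{eq oberlin 2} with plain $g$ and inserts $\mathbf{1}_{E(Q)}$ into each term afterwards via \eqref{eq sparse bil}. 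Second, the exponent bookkeeping in your single-cube step is misstated: pairing the local $L^{p}\to L^{q'+\gamma}$ bound with $g$ by Hölder produces a $(q'+\gamma)'$-average of $g$, and since $(q'+\gamma)' < q$ the upgrade to a $q$-average in \eqref{eq sparse goal} is immediate by Jensen — no interpolation against the stopping bound is needed for that, and the gain $\gamma>0$ is instead what makes the scale summation and iteration close.
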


\begin{proof}
    This follows from the proof of Theorems 1.3 and 1.4 in \cite{Oberlin2019} (note that our $q$ is his $q'$), with the following modifications. 
    Firstly, Oberlin considers operators 
    $$
        \sum_{s \in \mathbb{Z}} \varepsilon_s(x) f * \mu_s\,,
    $$
    with $\varepsilon_s(x) \in [-1,1]$, where $\mu_s$ is an isotropic dilate of a fixed measure $\mu_0$. In our setting the $\mu_s$ are \emph{anisotropic} dilates of $D_{-s} \mu_s$, but Oberlin's proof goes through in the anisotropic setting as well. Furthermore, in our setting the measures $D_{-s} \mu_s$ are not identical, they depend on $s$. However, they satisfy all assumptions of Oberlin's theorems uniformly in $s$, by i) - v) above, and the proof still goes through with this assumption.

    It remains to explain why we can insert $\mathbf{1}_{E(Q)}$ in the $q$-average over $3Q$ in \eqref{eq sparse goal}. We explain it for $T_{*\mathbf{T}, \sigma'}$, the argument for $M_\mathbf{T}$ is very similar. 
    Oberlin constructs the sparse collection $\mathcal{S}$ by an iterative argument starting from a large cube $Q_0$, such that $f$, $g$ are supported in $Q_0$, $3Q_0$ respectively. The expression on the left hand side of \eqref{eq sparse goal} does not change if $f$ is restricted to $I(\mathbf{T})$ and $g$ to $3I(\mathbf{T})$, so we can choose $Q_0 = I(\mathbf{T})$. Oberlin then defines operators $T_Q$, which in our notation are
    \begin{equation}
        \label{eq TQ def}
        T_Q f(x) = \sum_{\substack{p \in \mathbf{T}\\s(p) \leq s(Q)}} \mathbf{1}_{E(p)}(x) \mathbf{1}_{s(p) \in \sigma'(x)} \cdot \mu_{s(p)} * (\mathbf{1}_Qf)(x)\,,
    \end{equation}
    so that $T_{*\mathbf{T},\sigma'} = T_{Q_0}$. Note that in the sum in \eqref{eq TQ def} only tiles $p$ with $Q \cap 3I(p) \neq \emptyset$ contribute. But if $s(p) \leq s(Q)$ and $Q \cap 3I(p) \neq \emptyset$, then $I(p) \subset 3Q$. Thus, $T_Q = \mathbf{1}_{E(Q)} T_Q$, and hence 
    \begin{equation}
    \label{eq sparse bil}
        \langle T_Q f, g\rangle = \langle T_Q f, \mathbf{1}_{E(Q)} g\rangle\,.
    \end{equation}
    Our goal is to estimate 
    $$
        |\langle T_{Q_0} f, g\rangle| = |\langle T_{Q_0} f, \mathbf{1}_{E(Q_0)} g\rangle|\,.
    $$
    Oberlin shows that for every dyadic cube $Q$ and every $f$ there exists a collection $Q_1(Q)$ of dyadic cubes such that
    \begin{equation}
    \label{eq oberlin 2}
        |\langle T_Q f, g\rangle| \leq |Q| \langle f\rangle_{Q, p} \langle g\rangle_{3Q, q} + \sum_{\substack{Q' \in Q_1(Q)\\Q' \subset Q}} \langle T_{Q'} f, g\rangle\,.
    \end{equation}
    This follows from his equations (3.2) and (3.3) and the claims below them. The sparse collection $\mathcal{S}$ is then constructed by starting with $Q_0$ and iteratively adding for all $Q \in \mathcal{S}$ the cubes $Q_1(Q)$ to $\mathcal{S}$. Combining \eqref{eq sparse bil} and \eqref{eq oberlin 2} one obtains \eqref{eq sparse goal} for this $\mathcal{S}$, and Oberlin shows that $\mathcal{S}$ is sparse. This completes the proof. Note that the only change in our argument compared to Oberlin's is that we use \eqref{eq sparse bil} to insert $\mathbf{1}_{E(Q)}$ into the $q$-averages.
\end{proof}

\begin{corollary}
    \label{cor sparse}
    Let $q$ be as in v) in Subsection \ref{sub sec setup tree}.
    Then for each $\varepsilon < \frac{1}{q} - \frac{1}{2}$ there exists $C > 0$ such that for each generalized tree $(\mathbf{T},\sigma')$ with $\mathbf{T}$ of density $\delta$, we have
    $$
        \|T_{*\mathbf{T},\sigma'}\|_{2 \to2} \leq C\delta^{\varepsilon}\,.
    $$
    The same statement holds with $T_{*\mathbf{T},\sigma'}$ replaced by $M_\mathbf{T}$.
\end{corollary}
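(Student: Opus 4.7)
The plan is to combine the sparse bound of Lemma~\ref{lem sparse} with a density estimate $|E(\mathbf{T}, Q)| \lesssim \delta |Q|$ for every dyadic cube $Q$ appearing in the sparse collection, and then exploit the slack that both $p$ and $q$ are strictly less than $2$ via H\"older's inequality and the $L^2$-boundedness of the $r$-maximal function $M_r$ for $r < 2$. The argument for $M_\mathbf{T}$ will be identical to that for $T_{*\mathbf{T},\sigma'}$.

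The key step is the density estimate. Suppose $E(\mathbf{T}, Q) \neq \emptyset$, so there exists $p_0 \in \mathbf{T}$ with $I(p_0) \subset 3Q$. Let $Q^*$ denote the smallest dyadic cube containing $3Q$, so $|Q^*| \lesssim |Q|$. Since Oberlin's construction in Lemma~\ref{lem sparse} starts from $Q_0 = I(\mathbf{T})$, every $Q \in \mathcal{S}$ satisfies $Q \subset I(\mathbf{T})$, and the generic case is $Q^* \subset I(\mathbf{T})$. In this case, let $\omega^*$ be the unique dyadic frequency cube at scale $s(Q^*)$ containing $\omega(\mathbf{T})$ and set $p^* := (Q^*, \omega^*)$. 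Since $s(p_0) \leq s(Q^*) \leq s(\mathbf{T})$, the frequency cubes form a nested chain $\omega(p_0) \supset \omega^* \supset \omega(\mathbf{T})$; the tile $p^*$ belongs to $C(\mathbf{T})$ with witnesses $p_0$ and $\tp(\mathbf{T})$, and applying the density definition to $(p, p', \lambda) = (p_0, p^*, 1)$ gives
$$
    |E(\mathbf{T}, Q)| \leq |\{x \in Q^* \, : \, N(x) \in \omega(\mathbf{T})\}| \leq |E(1, p^*)| \leq \delta |Q^*| \lesssim \delta |Q|\,.
$$
The boundary case $I(\mathbf{T}) \subset Q^*$ yields $|Q^*| \sim |I(\mathbf{T})|$ and is treated by applying density at the top tile $\tp(\mathbf{T})$ directly.

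With this bound in hand, fix exponents $p \leq p_1 < 2$ and $q \leq q_1 < 2$. H\"older's inequality gives $\langle f\rangle_{Q,p} \leq \langle f\rangle_{Q, p_1}$ and, combined with the density estimate,
$$
    \langle \mathbf{1}_{E(Q)} g\rangle_{3Q, q} \leq \left(\frac{|E(\mathbf{T},Q)|}{|3Q|}\right)^{1/q - 1/q_1} \langle g\rangle_{3Q, q_1} \lesssim \delta^{1/q - 1/q_1} \langle g\rangle_{3Q, q_1}\,.
$$
Inserting these into Lemma~\ref{lem sparse} and using the disjoint sets $U(Q) \subset Q$ with $|U(Q)| \geq |Q|/2$, the pointwise bounds $\langle f\rangle_{Q, p_1} \leq M_{p_1}f(x)$ and $\langle g\rangle_{3Q, q_1} \lesssim M_{q_1}g(x)$ valid for $x \in U(Q)$, together with Cauchy--Schwarz and the $L^2$-boundedness of $M_{p_1}, M_{q_1}$ (since $p_1, q_1 < 2$), yield
$$
    |\langle T_{*\mathbf{T},\sigma'} f, g\rangle| \lesssim \delta^{1/q - 1/q_1} \int M_{p_1} f \cdot M_{q_1} g \, \mathrm{d}x \lesssim \delta^{1/q - 1/q_1} \|f\|_2 \|g\|_2\,.
$$
Choosing $q_1$ sufficiently close to $2$ produces any exponent $\varepsilon < 1/q - 1/2$.

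The main obstacle is the construction of the auxiliary tile $p^*$ in the density argument: one must verify that $p^* \in C(\mathbf{T})$ and that its frequency cube is sandwiched between $\omega(\mathbf{T})$ and some $\omega(p_0)$ with $p_0 \in \mathbf{T}$ and $I(p_0) \subset 3Q$. The subsequent sparse-to-$L^2$ passage is standard and essentially automatic given the density gain. A minor secondary matter is the handling of cubes $Q$ comparable to $I(\mathbf{T})$, where $Q^*$ may strictly contain $I(\mathbf{T})$; this is absorbed into the above argument by invoking the top tile of $\mathbf{T}$.
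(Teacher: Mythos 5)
Your overall strategy is the same as the paper's: combine the sparse bound of Lemma~\ref{lem sparse} with a density estimate $|E(\mathbf{T},Q)|\lesssim\delta|Q|$ and then exploit the slack $p,q<2$ via H\"older and $L^2$-boundedness of the $r$-maximal functions. The second half of your argument (the passage from the sparse form to $L^2$) is correct. The problem is in the density estimate, and it is a genuine gap rather than a loose end.

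The inclusion $E(\mathbf{T},Q)\subset\{x\in Q^*:N(x)\in\omega(\mathbf{T})\}$ is false. For $x\in E(p)$ with $p\in\mathbf{T}$, the definition of $E(p)$ only guarantees $N(x)\in\omega(p)$, and for tiles $p$ below the top of the tree one has $\omega(p)\supsetneq\omega(\mathbf{T})$ strictly. So $N(x)$ need not lie in $\omega(\mathbf{T})$, nor in $\omega^*$. The frequency localization in your construction goes the wrong way: because $Q^*$ has large spatial scale, the associated frequency cube $\omega^*$ is \emph{small}, whereas each $\omega(p)$ with $I(p)\subset 3Q$ \emph{contains} $\omega^*$ (you noted $\omega(p_0)\supset\omega^*$ yourself). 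To bound $|E(\mathbf{T},Q)|$ by a single use of the density, you would need an auxiliary tile whose frequency cube contains all the $\omega(p)$, i.e.\ a tile at \emph{small} spatial scale, not at the large scale $Q^*$. This is exactly what the paper does: it partitions $I(\mathbf{T})$ into the maximal dyadic cubes $L$ that contain no $I(p)$ with $p\in\mathbf{T}$, picks for each $L$ the tile $p_L$ at the parent scale $s(L)+1$ with $\omega(\mathbf{T})\subset\omega(p_L)$, observes that every $p\in\mathbf{T}$ meeting $L$ satisfies $p_L\le p$ (so $\omega(p)\subset\omega(p_L)$ and hence $E(p)\cap L\subset E(1,p_L)$), applies density on each $L$, and then sums over $L\subset 3Q$ to get the desired bound. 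You cannot collapse this into a single application of density at the top scale of $Q$.

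There is also a secondary technical issue: the ``smallest dyadic cube $Q^*$ containing $3Q$'' need not exist. In the anisotropic dyadic grid $\mathbf{D}$, the triple $3Q$ of a dyadic cube generically straddles a dyadic boundary of every scale (for instance $Q=[0,1)$ in one spatial dimension gives $3Q=[-1,2)$, which is contained in no dyadic interval), so $|Q^*|\lesssim|Q|$ fails. The paper's summation over the partition $\mathbf{L}$ avoids this issue as well, since it never needs a single dyadic ancestor of $3Q$.
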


\begin{proof}
    Let $\mathbf{L} = \mathbf{L}(\mathbf{T})$ be the collection of maximal dyadic cubes $L$ such that there exists some $p \in \mathbf{T}$ with $L \subset I(p)$ but there exists no $p \in \mathbf{T}$ with $I(p) \subset L$. This is a partition of $I(\mathbf{T})$. Define 
    $$
        E(L) = L \cap \bigcup_{p' \in \mathbf{T}} E(p')\,.
    $$
    We claim that for each $L \in \mathbf{L}$ we have 
    \begin{equation}
        \label{eq delta claim}
        |E(L)| \leq \delta |L|\,.
    \end{equation}
    To prove this, fix $L \in \mathbf{L}$.
    By definition of $\mathbf{L}$ there exists $p \in \mathbf{T}$ with $I(p) \subset \hat L$. Define $p_L$ to be $p$ if $I(p) = \hat L$, and else let $p_L$ be the unique tile with $I(p_L) = \hat L$ and $\omega(\mathbf{T}) \subset \omega(p_L)$. If $p' \in \mathbf{T}$ with $I(p') \cap L \neq \emptyset$ then we have $p_L \leq p'$. Thus 
    $$
        |E(L)| \leq |E(p_L)| \leq \dens(\mathbf{T}) |L| \leq \delta |L|\,,
    $$
    giving the claim \eqref{eq delta claim}. Since $\mathbf{L}$ forms a partition of $I(\mathbf{T})$, we have for all $Q \in \mathbf{D}$
    $$
        |E(Q)| \leq \sum_{L \subset 3Q} |E(L)| \leq \delta \sum_{L \subset 3Q} |L| \leq \delta 3^{d+2} |Q|\,.
    $$
    Thus, for every $\tilde q > q$, we have by Hölder's inequality
    \begin{equation}
    \label{eq qq}
        \langle \mathbf{1}_{E(Q)} g \rangle_{3Q ,q} \leq (3^{d+2}\delta)^{1/q - 1/\tilde q} \langle g \rangle_{3Q ,\tilde q} \,.
    \end{equation}
    Now pick $(p,q)$ as in v), and pick $\tilde q$ with $q < \tilde q < 2$. We obtain from Lemma \ref{lem sparse} and \eqref{eq qq}
    \begin{align*}
        \left\lvert\int T_{*\mathbf{T},\sigma'} f(x) g(x) \, \mathrm{d}x \right\rvert
        &\lesssim \delta^{1/q - 1/\tilde q} \sum_{Q \in \mathcal{S}} |Q| \langle f \rangle_{Q, p} \langle g \rangle_{3Q, \tilde q}\\
        &\leq 2 \delta^{1/q - 1/\tilde q}  \sum_{Q \in \mathcal{S}} |U(Q)| \langle f \rangle_{Q, p} \langle g \rangle_{3Q, \tilde q}\\
        &\leq 2 \delta^{1/q - 1/\tilde q}  \sum_{Q \in \mathcal{S}} \int_{U(Q)} M_p f M_{\tilde q} g\\
        &\lesssim \delta^{1/q - 1/\tilde q}  \|f\|_2 \|g\|_2\,.
    \end{align*}
    Here we used boundedness of the $p$- and $\tilde q$- maximal function on $L^2$ for $p, \tilde q < 2$. This yields the  desired result for $T_{*\mathbf{T}, \sigma'}$. 
    
    For $M_{\mathbf{T}}$ the proof is exactly the same.  
\end{proof}

Combining Corollary \ref{cor sparse} with Lemma \ref{lem tree reduction} we obtain an estimate for a single tree with decay in the density parameter.

\begin{corollary}
    \label{cor single tree}
    For each $\varepsilon < \frac{1}{q} - \frac{1}{2}$ there exist $C > 0$ such for each generalized tree $(\mathbf{T},\sigma')$ with $\mathbf{T}$ of density $\delta$, we have
    $$
        \|T_{\mathbf{T},\sigma'}\|_{2 \to 2} \leq C \delta^{\varepsilon}\,.
    $$
\end{corollary}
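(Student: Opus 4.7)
The plan is to deduce this estimate by combining the pointwise domination from Lemma \ref{lem tree reduction} with the sparse-based $L^2$ bounds in Corollary \ref{cor sparse}. Since the two ingredients are already in place, the argument should be essentially a one-line triangle inequality in $L^2$.

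First, I would apply Lemma \ref{lem tree reduction} to obtain the pointwise bound
\begin{equation*}
    |T_{\mathbf{T},\sigma'}f(x)| \leq |T_{*\mathbf{T},\sigma'}(e^{-iN(\mathbf{T})\cdot y}f(y))(x)| + C\, M_{\mathbf{T}}f(x)\,.
\end{equation*}
Taking $L^2$ norms and using the triangle inequality reduces the problem to bounding each of the two terms on the right-hand side individually. For the first term, multiplication by the unimodular phase $e^{-iN(\mathbf{T})\cdot y}$ is an isometry on $L^2(\R^{d+1})$, so
\begin{equation*}
    \|T_{*\mathbf{T},\sigma'}(e^{-iN(\mathbf{T})\cdot y}f(y))\|_2 \leq \|T_{*\mathbf{T},\sigma'}\|_{2\to 2} \|f\|_2\,.
\end{equation*}

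Next, I would invoke Corollary \ref{cor sparse} twice: once to get $\|T_{*\mathbf{T},\sigma'}\|_{2\to 2} \leq C\delta^\varepsilon$ and once to get $\|M_\mathbf{T}\|_{2\to 2} \leq C\delta^\varepsilon$, both valid for any $\varepsilon < \frac{1}{q} - \frac{1}{2}$. Summing the two contributions yields
\begin{equation*}
    \|T_{\mathbf{T},\sigma'}f\|_2 \leq C\delta^\varepsilon \|f\|_2\,,
\end{equation*}
which is the claimed estimate.

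There is no real obstacle here: all the substantive work (the stationary-phase estimates on $\widehat{D_{-s}\mu_s}$, the sparse domination of Lemma \ref{lem sparse}, and the density-to-gain-in-$\delta$ conversion via Hölder applied on the sets $E(Q)$) has already been absorbed into Corollary \ref{cor sparse}, while the passage from the modulated operator $T_{\mathbf{T},\sigma'}$ to the unmodulated $T_{*\mathbf{T},\sigma'}$ plus a maximal remainder is contained in Lemma \ref{lem tree reduction}. The only mild point worth verifying is that the modulation factor $e^{-iN(\mathbf{T})\cdot y}$ does not interact with the sparse bound — but since we take $L^2$ norms immediately after applying Lemma \ref{lem tree reduction}, this modulation is cancelled by the $L^2$ isometry property and never reaches Corollary \ref{cor sparse}.
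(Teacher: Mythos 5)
Your argument is correct and is exactly what the paper does: the corollary is stated as an immediate consequence of combining Lemma \ref{lem tree reduction} with Corollary \ref{cor sparse}, and you have correctly supplied the routine details (triangle inequality in $L^2$, the fact that the unimodular modulation $e^{-iN(\mathbf{T})\cdot y}$ is an isometry of $L^2$, and that Corollary \ref{cor sparse} applies to both $T_{*\mathbf{T},\sigma'}$ and $M_{\mathbf{T}}$).
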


Recall from Subsection \ref{sub sec setup tree}  that for the measures $\mu_s$ defined by \eqref{eq mus def}, condition v) holds for all $p,q < 2$ such that $(1/p, 1/q)$ is in the interior of the convex hull of $(1,0), (0,1), (\frac{d+1}{d+2}, \frac{d+1}{d+2})$.
In this case the condition in Corollary \ref{cor single tree} becomes $\varepsilon < \frac{1}{2} - \frac{1}{2(d+1)}$.

\section{Forests: Proof of Proposition \ref{prop forest}}
\label{sec forest}

To prove Proposition \ref{prop forest}, it remains to combine the bounds for the operators $T_\mathbf{T}$ from Proposition \ref{prop tree} for all trees in a forest $\mathbf{F}$. For this, we will show almost orthogonality estimates for tree operators associated to separated trees.
\subsection{Basic orthogonality estimates}
\label{sub sec bas sep tree}

As a first step, we show that the adjoint generalized tree operator $T_{\mathbf{T}, \sigma'}^*$ is frequency localized near $N_\mathbf{T}$. We use the definitions from Subsection \ref{sub sec intro tree}.

We let $\Psi$ denote the set of Schwartz functions $\psi$ such that 
\begin{enumerate}
    \item[(i)] $\psi$ is supported in $B(0,1)$,
    \item[(ii)]  $\psi$ has integral $1$,
    \item[(iii)] $|\partial^\alpha \psi(x)| \leq L$ for all $|\alpha| \leq 10d$,
\end{enumerate}
where $L$ is chosen sufficiently large such that for each $\xi$ with $|\xi| \geq 1$, there exists $\psi \in \Psi$ with $\hat \psi(\xi) = 0$.

For $\psi \in \Psi$, we define the frequency projection $\Pi_{R, N} = \Pi_{R, N , \psi}$ by
$$
    \widehat{\Pi_{R, N} f}(\xi) = (1 - \hat \psi(R^{-1}(\xi - N))) \hat f(\xi)\,.
$$

\begin{lemma}
    \label{lem T* proj}
    For each $\kappa < \frac{1}{2(d+1)}$ there exist $C$ such that the following holds. For each generalized tree $(\mathbf{T},\sigma')$ such that each tile in $\mathbf{T}$ has scale at least $0$, all $R \geq 1$ and all $\psi \in \Psi$
    $$
        \|T_{\mathbf{T}, \sigma'} \Pi_{R,N(\mathbf{T})}\|_{2 \to2} \leq C R^{-\kappa}\,. 
    $$
\end{lemma}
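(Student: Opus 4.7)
The plan proceeds in three steps. First, conjugate both sides of the composition with $e^{-iN(\mathbf{T})\cdot x}$ to reduce to $N(\mathbf{T})=0$: after this conjugation the tree operator has modulation frequencies $N(x)-N(\mathbf{T})\in\omega(\mathbf{T})-N(\mathbf{T})$, a cube containing $0$ whose parabolic diameter is $\lesssim 2^{-s(\mathbf{T})}\lesssim 1$ (since all tiles have scale $\geq 0$). The new $N(x)$ are therefore bounded by an absolute constant on $I(\mathbf{T})$. Applying Lemma \ref{lem tree reduction} then gives the pointwise domination
\[
    |T_{\mathbf{T},\sigma'}\Pi_{R,0}f(x)|\leq |T_{*\mathbf{T},\sigma'}\Pi_{R,0}f(x)|+CM_\mathbf{T}\Pi_{R,0}f(x),
\]
reducing the lemma to separate $R^{-\kappa}$ bounds on each of these two terms.

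For the first, unmodulated term, I would apply a square-function argument combined with a Rademacher-Menshov-type reduction for the maximal truncation in $\sigma'(x)$. This bounds $\|T_{*\mathbf{T},\sigma'}\Pi_{R,0}g\|_2$ by (up to a logarithmic factor) $\|(\sum_{s\in J,\,s\geq 0}|\mu_s*\Pi_{R,0}g|^2)^{1/2}\|_2$, which by Plancherel equals
\[
    \left(\int|\hat g(\xi)|^2|1-\hat\psi(R^{-1}\xi)|^2 \sum_{s\geq 0,\,s\in J}|\hat\mu_s(\xi)|^2 \,d\xi\right)^{1/2}.
\]
The factor $|1-\hat\psi(R^{-1}\xi)|\lesssim\min(1,R^{-1}|\xi|)$ vanishes linearly at $\xi=0$, while the anisotropic stationary-phase decay of $\hat\mu_s$ at large $|\xi|$ (quantified by condition iv) of Subsection \ref{sub sec setup tree}) makes $\sum_s|\hat\mu_s(\xi)|^2$ decay in $|\xi|$. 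Balancing these two regimes via interpolation produces $R^{-\kappa}$ for any $\kappa<1/(2(d+1))$, with the logarithmic loss absorbed by choosing $\kappa$ strictly below the optimal threshold; the exponent $1/(2(d+1))$ reflects the $L^p$-improving range for averages along the paraboloid encoded in condition v).

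The main obstacle is the maximal-average error $M_\mathbf{T}\Pi_{R,0}f$: the absolute value in $M_\mathbf{T}h=\sup_s|h|*\lambda_s$ destroys the Fourier cancellation that $\Pi_{R,0}g$ provides, so Plancherel is not directly available. To overcome this, I would refine the proof of Lemma \ref{lem tree reduction}: instead of estimating $|e^{iN(x)\cdot y}-1|$ pointwise by $|N(x)\cdot y|$ and losing to a convolution against $\lambda_s\geq |\mu_s|$, write $e^{iN(x)\cdot y}-1=\int_0^1 iN(x)\cdot y\, e^{itN(x)\cdot y}\,dt$ and pull the factor $y$ into the kernel, obtaining a convolution against the vector-valued measure $y\mu_s$ in place of $\mu_s$. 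Its Fourier transform $i\nabla_\xi\hat\mu_s$ retains the stationary-phase decay, so the same Plancherel/square-function argument from the main term then applies to this refined error, yielding the same $R^{-\kappa}$ bound and completing the proof.
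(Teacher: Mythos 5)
Your overall plan---conjugate to $N(\mathbf{T})=0$, split off the unmodulated operator $T_{*\mathbf{T},\sigma'}$, treat that part by Plancherel against the Fourier decay of $\hat\mu_s$, and then deal with the error from replacing $N(x)$ by $0$---captures the right spirit, but the way you handle the error term has a genuine gap, and the paper's actual strategy is organised differently to avoid exactly that obstacle.

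The gap is in your final paragraph. After writing
\[
e^{iN(x)\cdot y}-1 = \int_0^1 iN(x)\cdot y\, e^{itN(x)\cdot y}\,\mathrm{d}t\,,
\]
the resulting expression for the error is
\[
\mathcal{E}f(x)= i\int_0^1 \sum_{s\in\sigma'(x)}\int f(x-y)\,\bigl(N(x)\cdot y\bigr)\, e^{itN(x)\cdot y}\,\mathrm{d}\mu_s(y)\,\mathrm{d}t\,.
\]
The factor $e^{itN(x)\cdot y}$ still depends on $x$ through $N(x)$, so this is \emph{not} a convolution against the vector-valued measure $y\,\mathrm{d}\mu_s(y)$. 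Plancherel therefore does not apply to this refined error term any more than it applied to the original modulated operator; the essential difficulty of an $x$-dependent modulation has been moved around, not removed. Bounding $|e^{itN(x)\cdot y}|=1$ and $|N(x)\cdot y|\lesssim 2^{s-\overline\sigma(x)}$ would just reintroduce the absolute values and land you back on $M_\mathbf{T}$. So the claim ``the same Plancherel/square-function argument from the main term then applies to this refined error'' is not justified, and this is precisely the step that needs a new idea.

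The paper's proof sidesteps the problem by splitting the sum over $s\in\sigma'(x)$ into the $\log_2 R$ largest scales and the remaining small scales \emph{before} replacing $N(x)$ by $N(\mathbf{T})$, so that each error term is already small without invoking the frequency projection at all. For the small scales $s\le\overline\sigma'(x)-\log_2 R$, the pointwise replacement error $2^{s-\overline\sigma'(x)}M^\lambda f$ sums to $O(R^{-1})M^\lambda f$, which is already $O(R^{-1})\|f\|_2$ in $L^2$; only the replaced, genuinely unmodulated operator then gets hit with $\Pi_{R,0}$ and estimated by Plancherel exactly as you propose for the main term. For the $\log_2 R$ largest scales, the paper discretises $N(x)$ to a net of $O(R^{\gamma\dim_h V})$ points $c(x,s)$ with parabolic resolution $R^{-\gamma}$; the discretisation error is $O(\log R\cdot R^{-\gamma})M^\lambda f$, while the discretised large-scale term is at most $\log R$ times a maximal operator over finitely many fixed modulations, which \emph{is} amenable to Plancherel. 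Optimising $\gamma$ yields $R^{-\kappa}$ for $\kappa<1/(2(1+\dim_h V))$, which is stronger than the $1/(2(d+1))$ claimed. The moral is that one should not pass to the pointwise domination of Lemma~\ref{lem tree reduction} globally in $s$, because the maximal-average error that produces is $O(1)$; instead the scale-splitting makes both errors carry their own power of $R$ independently of $\Pi_{R,N(\mathbf{T})}$. If you want to salvage your plan you would need to perform an analogous scale-splitting before introducing the error term, rather than trying to retroactively restore Fourier cancellation in $M_\mathbf{T}\Pi_{R,0}f$.
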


\begin{proof}
    We start by separating the $\log_2(R)$ largest scales:
    \begin{align}
        \label{eq large scales}
        T_{\mathbf{T},\sigma'} f(x) &= \sum_{\substack{s\in\sigma'(x)\\ s \geq \overline{\sigma}'(x) - \log_2(R)}} \int f (x-y) e^{iN(x)\cdot y}\mathrm{d}\mu_s(y) \\
        \label{eq small scales}
        &\quad + \sum_{\substack{s\in\sigma'(x)\\ s < \overline{\sigma}'(x) - \log_2(R)}} \int f(x-y) e^{iN(x)\cdot y}\mathrm{d}\mu_s(y)\,.
    \end{align}
    The small scale contribution \eqref{eq small scales} is close to the $N(\mathbf{T})$-modulated operator:
    \begin{multline}
        \left\lvert\sum_{\substack{s\in\sigma'(x)\\ s < \overline{\sigma}'(x) -  \log_2(R)}} \int f(x-y) (e^{-iN(x) \cdot y} -e^{-iN(\mathbf{T}) \cdot y}) \, \mathrm{d}\mu_s(y) \right\rvert \\
        \lesssim \sum_{\substack{s\in\sigma'(x)\\ s < \overline{\sigma}'(x) - \log_2(R)}} 2^{s - \overline{\sigma}'(x)} \int |f(x-y)| \, \mathrm{d}|\mu_s|(y)
        \lesssim R^{-1} M^{\lambda} f\,,
        \label{eq max fct comp}
    \end{multline}
    where $M^{\lambda}$ denotes the maximal average associated to $\lambda$.
    Here we have used that if $s \in \sigma'(x)$ with $s = \overline \sigma'(x) - k$, then there exists a frequency cube of scale $s + k$ containing $N(\mathbf{T})$ and $N(x)$, which implies that for all $y$ in the support of $\mu_s$ we have $| (N(x) - N(\mathbf{T})) \cdot y| \lesssim 2^{-k}$.
    So to bound \eqref{eq small scales}, it suffices to show the following estimate for the $N(\mathbf{T})$-modulated operator:
    $$
        \left\|\sum_{\substack{s \in \sigma'(x)\\ s < \overline{\sigma}'(x)- \log_2(R)}} \int  e^{iN(\mathbf{T})\cdot (x-y)} (\Pi_{R,N(\mathbf{T})}f)(x-y) \, \mathrm{d}\mu_s(y)\right\|_2 \lesssim R^{-1/2} \|f\|_2\,.
    $$
    Replacing $f$ by $e^{iN(\mathbf{T})\cdot x} f(x)$ and taking
     differences it suffices to show
    $$
        \left\lVert\sup_{0 \leq \underline{\sigma}} \left\lvert  \sum_{\underline{\sigma} \leq s} \mathbf{1}_J(s) \int (\Pi_{R,0} f)(x-y) \, \mathrm{d}\mu_s(y) \right\rvert\right\rVert_{2} \lesssim R^{-1/2} \|f\|_2\,.
    $$
    This follows from a standard square function argument, using that by \eqref{eq fourier decay} we have for all $\underline{\sigma} \geq 0$
    $$
        |1- \hat \psi(R^{-1} \xi)|\sum_{\underline\sigma \leq s} |\hat \mu_s(\xi)| \lesssim  R^{-1/2} 2^{-\underline\sigma/2} \,.
    $$

    Now we treat the large scales \eqref{eq large scales}, using that only logarithmically many scales contribute at each point. We discretize modulation frequencies. Let $\gamma > 0$, to be chosen later. For each dyadic frequency cube $\omega$, we fix finite subsets $M(\omega)$ satisfying conditions (i), (ii), (iii) in Section \ref{sub sec high antichain}, with $\rho = R^{-\gamma}$. For each $s \in \mathbb{Z}$, let $\omega_s$ be the unique frequency cube of scale $s$ containing $N(\mathbf{T})$. For each $x$ and each $s \in \sigma'(x)$, we pick a frequency $c(x, s) \in M(\omega_s)$ such that $d_{\text{par}}(N(x), c(x,s)) \leq R^{-\gamma}2^{-s}$.
    By a similar computation as in \eqref{eq max fct comp}
    \begin{multline*}
        \left\lvert\sum_{\substack{s\in\sigma'(x)\\ s \geq \overline{\sigma}'(x) -  \log_2(R)}} \int f(x-y) (e^{-iN(x) \cdot y} - e^{ic(x, s) \cdot y}) \, \mathrm{d}\mu_s(y) \right\rvert\\
        \leq \log_2(R) R^{-\gamma} M^{\lambda}f\,.
    \end{multline*}
    Thus we may replace $N(x)$ by $c(x, s)$ in \eqref{eq large scales}. We bound the resulting sum by the number of nonzero summands, which is at most $\log_2(R)$ at each point $x$, times the maximal summand.
    The maximal summand is controlled by the estimate 
    $$
        \left\lVert\sup_{s \geq 0} \sup_{c \in M(s)} \left\lvert \int (\Pi_{R,N}f)(x-y) e^{i c \cdot y} \, \mathrm{d}\mu_s(y) \right\rvert\right\rVert_{2}
        \lesssim R^{\gamma \dim_h V } R^{-1/2}\|f\|_2\,.
    $$
    This follows from
    $
        |(1- \hat \psi(R^{-1} (\xi - N + c)))|\lvert \hat \mu_s(\xi) \rvert \lesssim R^{-1/2} 2^{-s/2}
    $
    and a standard square function argument, using that there are at most $R^{\gamma \dim_h V}$ choices of $c$ for each $s$.
    Optimizing $\gamma$, we obtain the lemma. 
\end{proof}

An immediate corollary are almost orthogonality estimate for separated trees with the same top cube.

\begin{corollary}
    \label{cor sep trees baby}
    For each $\kappa < \frac{1}{2d + 3}$ there exists $C>0$ such that the following holds. 
    Let $\mathbf{T}_1, \mathbf{T}_2$ be a pair of $\Delta$-separated, normal trees with $ I(\mathbf{T}_1) =  I(\mathbf{T}_2) =: I$. Let $(\mathbf{T}_1, \sigma_1), (\mathbf{T}_2, \sigma_2)$ be generalized tree operators, with possibly different $\mu_s$. Then 
    \begin{equation}
    \label{eq sep trees baby}
        \left\lvert \int T_{\mathbf{T}_1,\sigma_1}^* g_1 \overline{T_{\mathbf{T}_2,\sigma_2}^* g_2}\right\rvert \leq C \Delta^{-\kappa} \| g_1 \|_{L^2(I)}  \|  g_2 \|_{L^2(I)}\,.
    \end{equation}
\end{corollary}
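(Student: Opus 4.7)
The plan is to combine the frequency-localization of Lemma \ref{lem T* proj} with the separation of the modulation centers $N(\mathbf{T}_1), N(\mathbf{T}_2)$ forced by the $\Delta$-separation. Since $\mathbf{T}_1, \mathbf{T}_2$ share the top cube $I$, applying $\Delta$-separation to the top tile of $\mathbf{T}_1$ yields $\Delta\omega(\mathbf{T}_1) \cap \omega(\mathbf{T}_2) = \emptyset$, hence $|N(\mathbf{T}_1) - N(\mathbf{T}_2)| \gtrsim \Delta \cdot 2^{-s(I)}$ in Euclidean norm. Rescaling by $\delta_{-s(I)}$ and invoking the scale invariance of conditions i)--v) on $\mu_s, \lambda_s$, we may assume $s(I) = 0$, $|N(\mathbf{T}_1) - N(\mathbf{T}_2)| \gtrsim \Delta$, and all tile scales are non-negative, as required by Lemma \ref{lem T* proj}.

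Fix any $\kappa_0 < 1/(2(d+1))$ and a real, even $\psi \in \Psi$, so that the multiplier of $\Pi_{R, N, \psi}$ is real (hence $\Pi_{R, N, \psi}$ is self-adjoint) and moreover $|\hat\psi(\xi)| \lesssim (1+|\xi|)^{-10d}$ follows from the $C^{10d}$-bound defining $\Psi$. Set $R := \Delta^{\theta}$ for some $\theta \in (0,1)$ to be chosen, write $\Pi_i := \Pi_{R, N(\mathbf{T}_i), \psi}$, $P_i := I - \Pi_i$, and split $T_{\mathbf{T}_i, \sigma_i}^* = \Pi_i T_{\mathbf{T}_i, \sigma_i}^* + P_i T_{\mathbf{T}_i, \sigma_i}^*$. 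Expanding the inner product in \eqref{eq sep trees baby} into four terms, the three that contain at least one factor $\Pi_i T_{\mathbf{T}_i, \sigma_i}^* g_i$ are handled by Cauchy--Schwarz, the bound $\|\Pi_i T_{\mathbf{T}_i, \sigma_i}^* g_i\|_2 \lesssim R^{-\kappa_0} \|g_i\|_2$ (which follows from Lemma \ref{lem T* proj} and the self-adjointness of $\Pi_i$), together with the uniform estimate $\|T_{\mathbf{T}_i, \sigma_i}\|_{2 \to 2} \lesssim 1$ from Corollary \ref{cor single tree}; each contributes at most $R^{-\kappa_0} \|g_1\|_{L^2(I)} \|g_2\|_{L^2(I)}$.

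The remaining diagonal term $\langle P_1 T_{\mathbf{T}_1, \sigma_1}^* g_1, P_2 T_{\mathbf{T}_2, \sigma_2}^* g_2\rangle$ equals, by Plancherel, the integral of $\hat\psi(R^{-1}(\xi - N(\mathbf{T}_1)))\,\overline{\hat\psi(R^{-1}(\xi - N(\mathbf{T}_2)))}$ against $\widehat{T_{\mathbf{T}_1, \sigma_1}^* g_1}\,\overline{\widehat{T_{\mathbf{T}_2, \sigma_2}^* g_2}}$. For every $\xi$ at least one of $|\xi - N(\mathbf{T}_i)|$ is $\geq |N(\mathbf{T}_1) - N(\mathbf{T}_2)|/2 \gtrsim \Delta$, so the Schwartz decay bounds this multiplier pointwise by $(\Delta/R)^{-10d}$; hence the diagonal term is $\lesssim (\Delta/R)^{-10d} \|g_1\|_{L^2(I)} \|g_2\|_{L^2(I)}$. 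Choosing $\theta$ close to $1$ renders this decay much faster than $R^{-\kappa_0} = \Delta^{-\theta \kappa_0}$, so the total bound is $\lesssim \Delta^{-\theta \kappa_0}\|g_1\|_{L^2(I)}\|g_2\|_{L^2(I)}$, yielding \eqref{eq sep trees baby} for any $\kappa < \kappa_0$, which covers the range $\kappa < 1/(2d+3)$ claimed in the statement. The main technical nuisance is the initial scale normalization, which must preserve both the generalized tree structure and the kernel hypotheses; this is immediate from the anisotropic scale invariance of conditions i)--v) and the fact that the projections $\Pi_{R, N, \psi}$ are indexed by the Euclidean parameter $R$ independent of the tree scale.
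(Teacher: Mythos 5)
Your approach matches the paper's essentially step for step: split $T^*_{\mathbf{T}_i}$ via the frequency projections $\Pi_i = \Pi_{R, N(\mathbf{T}_i)}$ at a scale $R = \Delta^\theta$, control the three cross terms by Lemma~\ref{lem T* proj} and uniform $L^2$-boundedness, and estimate the remaining diagonal term via the smallness of the product of the two bump multipliers, exploiting $|N(\mathbf{T}_1)-N(\mathbf{T}_2)| \gtrsim \Delta$. (The paper only uses decay $|\hat\psi(\xi)|\lesssim|\xi|^{-1}$ for the diagonal term and optimizes $\theta$, which is what pushes the range down to $\kappa < \tfrac{1}{2d+3}$; by using the full Schwartz-type decay $|\hat\psi(\xi)|\lesssim(1+|\xi|)^{-10d}$, you can indeed take $\theta$ close to $1$ and obtain the larger range $\kappa < \tfrac{1}{2(d+1)}$, which is a harmless improvement.)

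There is, however, a concrete error in the scale normalization. You rescale by $\delta_{-s(I)}$ so that $s(I)=0$ and then assert that all tile scales are non-negative. But every tile $p$ in $\mathbf{T}_1\cup\mathbf{T}_2$ has $I(p)\subset I$, hence $s(p)\leq s(I)=0$; so after your rescaling the tile scales are non-positive, and Lemma~\ref{lem T* proj}'s hypothesis that all scales are at least $0$ fails (nontrivially, unless the tree consists only of scale-$s(I)$ tiles). The correct normalization is to rescale so that the \emph{minimal} scale of a tile in $\mathbf{T}_1\cup\mathbf{T}_2$ is $0$; then all tile scales lie in $\{0,1,\dots,s(I)\}$ and the lemma applies with every $R\geq 1$. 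Relatedly, the claim that "$\Delta$-separation applied to the top tile of $\mathbf{T}_1$ yields $\Delta\omega(\mathbf{T}_1)\cap\omega(\mathbf{T}_2)=\emptyset$" is not quite justified: the top $\tp(\mathbf{T}_1)$ is only an upper bound and need not belong to $\mathbf{T}_1$, so the separation hypothesis does not apply to it directly. You should instead apply the hypothesis to a tile $p_1\in\mathbf{T}_1$ of minimal scale (which, with the corrected normalization, gives $\Delta\omega(p_1)\cap\omega(\mathbf{T}_2)=\emptyset$ with $\omega(p_1)$ of size $\sim 1$, and hence $|N(\mathbf{T}_1)-N(\mathbf{T}_2)|\gtrsim\Delta$ since $N(\mathbf{T}_1)\in\omega(\mathbf{T}_1)\subset\omega(p_1)$). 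With these two fixes the proof is correct.
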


\begin{proof}
    We drop the $\sigma_i$ from the notation and write $T_{\mathbf{T}_i, \sigma_i} = T_{\mathbf{T}_i}$.
    By scaling, we may assume that the minimal scale of a tile in $\mathbf{T}_1 \cup \mathbf{T}_2$ is $0$. Furthermore we may assume that $\Delta > 3$, otherwise \eqref{eq sep trees baby} follows from $L^2$ boundedness of generalized tree operators. Then it holds by $\Delta$-separation that $|N(\mathbf{T}_1) - N(\mathbf{T}_2)| \geq \Delta/3$.
    
    We let $\kappa' = \kappa/(1 - \kappa) < 1/(2(d+1))$ and $\gamma = 1/(1+\kappa')$, define 
    $\Pi_i = \Pi_{\Delta^\gamma, N(\mathbf{T}_i)}$ for some function $\psi \in \Psi$, and split up 
    \begin{multline*}
         \langle T_{\mathbf{T}_1}^* g_1, T_{\mathbf{T}_2}^* g_2\rangle = \langle \Pi_1 T_{\mathbf{T}_1}^* g_1, \Pi_2 T_{\mathbf{T}_2}^* g_2\rangle + \langle (1-\Pi_1) T_{\mathbf{T}_1}^* g_1, \Pi_2 T_{\mathbf{T}_2}^* g_2\rangle\\
        + \langle \Pi_1 T_{\mathbf{T}_1}^* g_1, (1 - \Pi_2) T_{\mathbf{T}_2}^* g_2\rangle + \langle (1-\Pi_1) T_{\mathbf{T}_1}^* g_1,(1 -\Pi_2) T_{\mathbf{T}_2}^* g_2\rangle\,.
    \end{multline*}
    The first three terms are bounded by $\Delta^{-\gamma \kappa'} \|g_1\|_{L^2(I)} \|g_2\|_{L^2(I)}$ by Lemma \ref{lem T* proj},
    and the last term is bounded by 
    $$
    \|(1 - \Pi_1)(1 - \Pi_2)\|_{2 \to 2} \|g_1\|_{L^2(I)} \|g_2\|_{L^2(I)} \lesssim \Delta^{\gamma - 1} \|g_1\|_{L^2(I)} \|g_2\|_{L^2(I)}\,,
    $$  
    because $\psi \in \Psi$ and $|N(\mathbf{T}_1) - N(\mathbf{T}_2)| \geq \Delta/3$. This gives the desired estimate since $\gamma \kappa' = \gamma - 1= \kappa$.
\end{proof}

\subsection{Auxiliary estimates for oscillatory integrals}
\label{sub sec aux sep tree}

In this subsection we prepare the proof of a version of Corollary \ref{cor sep trees baby} for a general pair of separated trees  by showing some estimates for oscillatory integrals along paraboloids.

From now on,  $\mu_s$ and $\lambda_s$ are fixed to be the measures defined in \eqref{eq mus def} and \eqref{eq las def}. We define
\begin{equation}
    \label{eq chi def}
    \chi_{s, \varepsilon} = \varepsilon^{-d-1} \mathbf{1}_{B(0,\varepsilon)} * (\lambda_{s-1} + \lambda_s + \lambda_{s+1})
\end{equation}
and $\chi_{s,\varepsilon,\delta} = \delta^{-1} \chi_{s, \varepsilon} \mathbf{1}_{|x_d| < \delta2^s}$. We also define the associated maximal convolution operators
\begin{equation}
    \label{eq M1 def}
    M^{\chi, 1} f(x) := \sup_{s \in \mathbb{Z}} \sup_{\varepsilon > 0} |f| * \tilde \chi_{s, \varepsilon}\,,
\end{equation}
\begin{equation}
    \label{eq M2 def}
    M^{\chi,2} f(x) := \sup_{s \in \mathbb{Z}} \sup_{\varepsilon > 0} \sup_{\delta > 0} |f| * \tilde \chi_{s, \varepsilon, \delta}\,,
\end{equation}
where $\tilde \chi(x) = \chi(-x)$.
Both $M^{\chi,1}$ and $M^{\chi,2}$ are bounded on $L^2$, because they are dominated by the composition of the Hardy-Littlewood maximal function and maximal averages along the parabolas in the direction of the coordinate axes $\{te_i + t^2 e_{d+1} \, : \, t \in \R\}$ for $i = 1, \dotsc, d$.

\begin{lemma}
    \label{lem sep tree aux}
    There exists $C > 0$ such that the following holds. For all $a,b > 0$ with $b \geq 4a$, all $N \in V$, all $\psi\in \Psi$ and all $s \geq -1$ we have
    \begin{equation}
        \label{eq osc aux 1}
        |(e^{iN \cdot y} \mu_s(y))*\psi_{a}(x)| \lesssim  b \chi_{s, a, b}(x) + \frac{1}{ab|N|}  \chi_{s, a}(x)\,.
    \end{equation}
    Furthermore, if $\varphi \geq 0$ has integral $1$  and $\mu_s^l = \mu_s * \varphi$, then
    \begin{equation}
        \label{eq osc aux 2}
        |(e^{iN \cdot y} \mu_s^l(y))*\psi_{a}(x)| \lesssim  b \chi_{s, a, b} * \varphi(x) + \frac{1}{ab|N|}  \chi_{s, a} * \varphi(x) \,.
    \end{equation}
    Here we write $\psi_t(x) = t^{-d-1} \psi(t^{-1}x)$.
\end{lemma}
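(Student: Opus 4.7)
\emph{Plan.} I will prove two pointwise bounds for $I(x) := (e^{iN\cdot y}\mu_s)*\psi_a(x) = \int e^{i\phi(z)}\psi_a(x-(z,|z|^2))K_s(z)\,\mathrm{d}z$ (with $\phi(z) = N\cdot(z,|z|^2)$) and combine them: the trivial universal bound $|I(x)|\lesssim \chi_{s,a}(x)$, and the improved bound $|I(x)|\lesssim (ab|N|)^{-1}\chi_{s,a}(x)$ under the assumption $|x_d|\geq b 2^s$. The trivial bound follows by taking absolute values and using $|K_s|\lesssim 2^{-sd}$, $|\psi_a|\lesssim a^{-d-1}$ together with the standard size computation $\chi_{s,a}(x)\sim a^{-1}2^{-s(d+1)}$ on its support. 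Combining the two bounds yields \eqref{eq osc aux 1}, since inside the slab $|x_d|<b 2^s$ the trivial bound equals $b\chi_{s,a,b}(x)$.

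For the improved bound, I change variables $(z_1,\ldots,z_d)\mapsto(z^*,u)$ with $z^*=(z_1,\ldots,z_{d-1})$ and $u=|z|^2-x_{d+1}$, whose Jacobian is $(2|z_d|)^{-1}$. The hypotheses $|x_d|\geq b 2^s$, $b\geq 4a$ and $s\geq -1$, combined with the support constraint $|z_d-x_d|\leq a$ coming from $\psi_a$, force $|z_d|\sim b 2^s$ with a definite sign, so the change of variables is non-singular. The key feature is that $\phi$ is linear in a single direction of the new coordinates: for $V=\R^{d-1}\times\{0\}^2$ one has $\phi=\sum_{i\leq d-1} N_i z_i$, linear in $z^*$ with gradient magnitude $|N|$; for $V=\{0\}^d\times\R$ one has $\phi=N_{d+1}(u+x_{d+1})$, linear in $u$ with derivative magnitude $|N|$.

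A single integration by parts in the direction of linearity contributes a factor $|N|^{-1}$ times an amplitude derivative, dominated by $|A|/a$ from differentiating $\psi_a$; derivatives of $K_s$ and $z_d^{-1}$ contribute strictly smaller factors of $2^{-s}$ and $(b 2^s)^{-1}$, and boundary terms vanish since $\psi_a$ is compactly supported in the interior. The amplitude $A$ (incorporating the Jacobian $(2|z_d|)^{-1}$) then satisfies $|A|\lesssim a^{-d-1}2^{-sd}/(b 2^s)$, and the support has volume $\lesssim a^d$ in $(z^*, u)$. Hence altogether $|I(x)|\lesssim |N|^{-1}\cdot(|A|/a)\cdot a^d\sim (a^2 b|N| 2^{s(d+1)})^{-1}\sim (ab|N|)^{-1}\chi_{s,a}(x)$, as required.

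Estimate \eqref{eq osc aux 2} is obtained from \eqref{eq osc aux 1} by convolution with $\varphi$: the identity $e^{iN\cdot y}(\mu_s*\varphi)=(e^{iN\cdot y}\mu_s)*(e^{iN\cdot y}\varphi)$ gives $(e^{iN\cdot y}\mu_s^l)*\psi_a=(e^{iN\cdot y}\varphi)*((e^{iN\cdot y}\mu_s)*\psi_a)$, and since $|e^{iN\cdot y}\varphi|=\varphi\geq 0$, the pointwise bound \eqref{eq osc aux 1} for the inner convolution is preserved under convolution with $\varphi$. The main subtlety of the argument is producing both factors $|N|^{-1}$ and $b^{-1}$ simultaneously: the $b^{-1}$ arises from the Jacobian $(2|z_d|)^{-1}\sim (b 2^s)^{-1}$, enabled precisely by the $|x_d|\geq b 2^s$ condition, while the $|N|^{-1}$ comes from a single integration by parts in the direction where the phase is linear in the new coordinates, which unifies the two otherwise very different choices of $V$.
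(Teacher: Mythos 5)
Your decomposition of $|I|$ into a trivial bound (giving the $b\,\chi_{s,a,b}$ term) and an improved oscillatory bound for $|x_d|\geq b2^s$ (giving the $(ab|N|)^{-1}\chi_{s,a}$ term) is the same structure as the paper's proof, and the final deduction of \eqref{eq osc aux 2} from \eqref{eq osc aux 1} is also the same. The difference is the coordinates used for the integration by parts, and this is where there is a genuine gap.

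In your coordinates $(z^*,u)$ with $z_d=z_d(z^*,u)=\pm\sqrt{u+x_{d+1}-|z^*|^2}$, the amplitude $A(z^*,u)=\psi_a(x'-z,-u)K_s(z)(2|z_d|)^{-1}$ depends on $z_d$ as a function of the integration variables, and you must therefore chain-rule through $z_d$ when you differentiate. For $V=\R^{d-1}\times\{0\}^2$ this gives $\partial_{z_i}|_u\psi_a = -\partial_i\psi_a + (z_i/z_d)\partial_d\psi_a$ with $|z_i/z_d|\lesssim 1/b$, and for $V=\{0\}^d\times\R$ it gives $\partial_u\psi_a = -\partial_{d+1}\psi_a - (2z_d)^{-1}\partial_d\psi_a$ with $(2|z_d|)^{-1}\lesssim(b2^s)^{-1}$. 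Neither of these is $\lesssim a^{-d-2}$; they are bigger by a factor of $1/b$ resp.\ $(b2^s)^{-1}$, both of which can be large in the regime of interest (in the application one has $b=\Delta^{-1/2}\ll 1$). So the claim that the amplitude derivative is ``dominated by $|A|/a$ from differentiating $\psi_a$'' is false, and if one plugs the correct pointwise bound into your volume estimate $\lesssim a^d$, the output is $(a^2b^2|N|)^{-1}2^{-(d+1)s}$, worse than the target by a factor of $1/b$.

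The estimate can in fact be rescued, but only by a compensating observation you do not make: the constraint $|x_d-z_d(z^*,u)|\leq a$ forces the support to be correspondingly thinner in the integration variable precisely when the chain-rule factor is large, and the product $(1+|z_i/z_d|)\cdot|\{\text{support}\}|$ is $\lesssim a^d$ even though neither factor is separately of the right size. Without this cancellation (or another device) your argument does not close. The paper sidesteps the issue entirely by integrating by parts in a variable along which $y$ moves at unit speed: for $V=\R^{d-1}\times\{0\}^2$ it uses generalized cylindrical coordinates and integrates in the angle $\theta$ of the $(y_{d-1},y_d)$-plane (so $\partial_\theta$ produces only benign factors of $r$, and the small denominator $|\phi'_{h,\tilde y}(\theta)|\gtrsim |N|br$ is where the gain $b$ is harvested), and for $V=\{0\}^d\times\R$ it uses the coarea parametrization $t=|y|^2$, under which $y=\sqrt{t}\omega$ moves radially at rate $(2\sqrt{t})^{-1}\sim 2^{-s}$ rather than at rate $(2|z_d|)^{-1}\sim(b2^s)^{-1}$. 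So the chain rule never produces a $1/b$ in the paper's amplitude derivative, and the volume estimate can be used crudely.
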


\begin{proof}
    The estimate \eqref{eq osc aux 2} follows from \eqref{eq osc aux 1} and the triangle inequality.
    For $|x_d| < b2^s$ estimate \eqref{eq osc aux 1} also follows directly from the triangle inequality. For $|x_d| \geq b2^s$ we distinguish cases depending on $V$. 
    
    If $V = \R^{d-1} \times \{0\}^2$ then $N_{d} = N_{d+1} = 0$, and without loss of generality we have that $|N_{d-1}| \geq |N|/d$.
    We let $\tilde y = (y_1, \dotsc, y_{d-2})$ and put 
    $$
        y  = (\tilde y, \sqrt{h^2 - |\tilde y|^2} \cos(\theta), \sqrt{h^2 - |\tilde y|^2} \sin(\theta))\,.
    $$
    We abbreviate $r = \sqrt{h^2 - |\tilde y|^2}$ and change variables to obtain
    \begin{multline}
         \int \psi_{a}(x - z) e^{iN\cdot z} \, \mathrm{d}\mu_s(z)
         \\
        = \int \psi_{a}(x' - y, x_{d+1} - |y|^2) e^{iN(x)\cdot (y, |y|^2)} K_s(y) \, \mathrm{d}y \\
        = \int\!\!\int\!\!\int \Gamma_{h, \tilde y}(\theta) e^{i \phi_{h, \tilde y}(\theta)} \, \mathrm{d}\theta\, h \, \mathrm{d}h \, \mathrm{d}\tilde y\,, \label{eq ibp prep}
    \end{multline}
    where we put  $\phi_{h, \tilde y}(\theta) = N_{d-1} r \cos(\theta)$ and
    \begin{multline*}
        \Gamma_{h, \tilde y}(\theta) = \psi_{a}(\tilde x - \tilde y, x_{d-1} - r \cos(\theta), x_d - r \sin(\theta), x_{d+1} - h) \\
        K_s(\tilde y, r \cos(\theta), r \sin(\theta))e^{i \tilde N \cdot \tilde y}\,.
    \end{multline*}
    Using \eqref{eq ks size} and the assumption $s \geq -1$, we find that
    \begin{equation}
        \label{eq gamma der}
        \left|  \Gamma'_{h, \tilde y}(\theta)\right| \lesssim a^{-d-2} 2^{-ds} r\,.
    \end{equation}
    Since $|x_d| > b2^s$ and since $|x_d - r \sin(\theta)| \leq a$ on the support of $\Gamma_{h, \tilde y}$, we have additionally that $|\sin(\theta)| \geq b - 2^{-s}a \geq b/2$. Thus 
    \begin{equation}
        \label{eq phi der}
        \left|  \phi'_{h, \tilde y}(\theta) \right| \geq \frac{|N|br}{2d}\qquad \text{and} \qquad \left| \phi''_{h, \tilde y}(\theta) \right| \leq \frac{|N|r}{d}\,.
    \end{equation}
    Integrating by parts in \eqref{eq ibp prep}, we obtain with \eqref{eq gamma der} and \eqref{eq phi der}
    \begin{align*}
        \eqref{eq ibp prep} &\leq \int\!\!\int\!\!\int \frac{|\Gamma_{h, \tilde y}'(\theta)|}{|\phi_{h, \tilde y}'(\theta)|} + \frac{|\Gamma_{h, \tilde y}(\theta)|}{|\phi_{h, \tilde y}'(\theta)|^2}|\phi_{h, \tilde y}''(\theta)|\, \mathrm{d}\theta h \,\mathrm{d}h \, \mathrm{d}\tilde y\\
        &\lesssim \int \! \! \int \!\! \int  (\mathbf{1}_{B(0,1)})_a(\tilde x - \tilde y, x_{d-1} - r\cos(\theta), x_d - r \sin(\theta), x_{d+1} - h) \\
        &\qquad\qquad\qquad 2^{-ds} (\frac{1}{ab|N|} + \frac{1}{b^2|N|})\mathbf{1}_{2^{s-3} < |y| < 2^s} \, \mathrm{d} \theta\, h \, \mathrm{d}h \, \mathrm{d}\tilde y\\  
        &\lesssim \int_{2^{s-3}< |y| < 2^{s}} 2^{-ds} \frac{1}{ab|N|}  (\mathbf{1}_{B(0,1)})_a(x' - y, x_{d+1} - |y|^2) \, \mathrm{d}y\\
        &\lesssim \frac{1}{ab|N|} \chi_{s,a}\,. 
    \end{align*}

    Now we assume that $V = \{0\}^d \times \R$. Then we have $N' = 0$ and hence
    \begin{multline}
    \label{eq ibp prep 2}
        \int \psi_{a}(x - z) e^{iN\cdot z} \, \mathrm{d}\mu_s(z)= \int \psi_{a}(x' - y, x_{d+1} - |y|^2) e^{iN_{d+1} |y|^2} K_s(y) \, \mathrm{d}y \\
        = \int \Gamma(t) e^{i N_{d+1}t} \, \mathrm{d}t\,,
    \end{multline}
    where this time we have set 
    $$
        \Gamma(t) = \int \psi_{a}(x' - y, x_{d+1} - t) K_s(y) \delta(t - |y|^2) \, \mathrm{d}y\,.
    $$
    Using \eqref{eq ks size} and that $s \geq -1$, one finds that $\left| \Gamma'(t)\right| \lesssim a^{-d-2} 2^{-ds}$. Integrating by parts in \eqref{eq ibp prep 2} gives the desired estimate \eqref{eq osc aux 1} after a similar, but simpler computation as in the case $V = \R^{d-1} \times \{0\}^2$.
\end{proof}

\begin{lemma}
    \label{lem sep tree aux 2}
    There exists $C > 0$ such that the following holds.
    Let $s \in \mathbb{Z}$, $a > 0$ and $b > 2^{-s}$. Let $\psi, \varphi \in \Psi$ with $\hat \psi(-aN) = 0$. Define 
    $$
        \varphi_{b,s}(x) = (2^sb)^{-d-2} \varphi(2^{-s}b^{-1} x', 2^{-2s}b^{-2} x_{d+1})
    $$ 
    and $\mu_s^l = \mu_s * \varphi_{b,s}$. Then we have that
    \begin{equation}
    \label{eq musl osc bound}
        |\psi_{a} * (e^{iN \cdot y} \mu_s^l(y))|\leq C
        a 2^{-s} b^{-(d+3)} 2^{-(d+2)s} \mathbf{1}_{[-2^s, 2^s]^d \times [-2^{2s}, 2^{2s}]}(y)\,.
    \end{equation}
\end{lemma}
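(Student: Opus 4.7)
The plan is to exploit the cancellation $\hat{\psi}(-aN) = 0$ together with the anisotropic smoothness of the mollified measure $\mu_s^l = \mu_s * \varphi_{b,s}$; unlike Lemma \ref{lem sep tree aux}, no stationary phase is needed, only a first-order Taylor expansion with the right cancellation.

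First I would change variables $z = y - u$ in the convolution to obtain
\begin{equation*}
    (\psi_a * (e^{iN\cdot\cdot} \mu_s^l))(y) = e^{iN\cdot y} \int \psi_a(z) e^{-iN\cdot z} \mu_s^l(y - z)\, \mathrm{d}z.
\end{equation*}
Taking $\psi$ real-valued (which we may arrange in the relevant applications of $\Psi$), the hypothesis $\hat{\psi}(-aN) = 0$ yields $\hat{\psi}(aN) = 0$ as well, i.e.\ $\int \psi_a(z) e^{-iN\cdot z}\, \mathrm{d}z = 0$. I can therefore subtract the constant $\mu_s^l(y)$ in the integrand to rewrite the convolution as
\begin{equation*}
    e^{iN\cdot y} \int \psi_a(z) e^{-iN\cdot z} \bigl[\mu_s^l(y - z) - \mu_s^l(y)\bigr]\, \mathrm{d}z.
\end{equation*}

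Second, I would control the difference via the fundamental theorem of calculus by $\sum_i |z_i|\, \|\partial_i \mu_s^l\|_\infty$. The anisotropic scales of $\varphi_{b,s}$ are $r' = 2^s b$ in the first $d$ coordinates and $r_{d+1} = 2^{2s}b^2$ in the last, so a direct computation from the definition gives $\|\partial_{x_i}\varphi_{b,s}\|_\infty \lesssim 2^{-(d+3)s}b^{-(d+3)}$ for $i \le d$ and $\|\partial_{x_{d+1}} \varphi_{b,s}\|_\infty \lesssim 2^{-(d+4)s} b^{-(d+4)}$. Combined with $\|\mu_s\|_1 \lesssim 1$ from \eqref{eq ks size} and Young's inequality, the same bounds hold for $\partial_i\mu_s^l = \mu_s * \partial_i \varphi_{b,s}$. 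Since $|z_i| \leq a$ on $\operatorname{supp}\psi_a$ and $\|\psi_a\|_1 \lesssim 1$, the pointwise estimate is
\begin{equation*}
    |(\psi_a * (e^{iN\cdot\cdot}\mu_s^l))(y)| \lesssim a \cdot 2^{-(d+3)s}b^{-(d+3)} + a \cdot 2^{-(d+4)s}b^{-(d+4)}.
\end{equation*}
The hypothesis $b > 2^{-s}$ forces $2^s b \geq 1$, so the second term is dominated by the first, giving precisely $a\cdot 2^{-s}b^{-(d+3)}\cdot 2^{-(d+2)s}$.

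Finally, for the indicator: $\mu_s^l$ is supported in $\{|y'| \leq 2^s(1+b),\, |y_{d+1}| \leq 2^{2s}(1+b^2)\}$, and convolution with $\psi_a$ extends each coordinate by at most $a$; in the parameter ranges of Section \ref{sec forest} where this lemma is invoked, this set lies inside $[-2^s, 2^s]^d \times [-2^{2s}, 2^{2s}]$ up to a universal constant that can be absorbed into $C$. The main subtlety is bookkeeping the anisotropic derivatives so that the $x_{d+1}$-derivative contribution is absorbed by the $x'$-derivative contribution, which is exactly where the lower bound $b > 2^{-s}$ enters; everything else is just cancellation plus first-order Taylor expansion.
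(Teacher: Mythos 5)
Your proof is correct, and it takes a genuinely different route from the paper's: you work entirely on the physical side, whereas the paper works on the Fourier side. The paper applies Fourier inversion, uses $\hat\psi(-aN)=0$ together with the derivative bound on $\hat\psi$ (coming from $\psi\in\Psi$) to get $|\hat\psi(a\xi-aN)|\lesssim a|\xi|$, and then computes $\int|\xi||\hat\varphi_{b,s}(\xi)|\,\mathrm{d}\xi$ by anisotropic rescaling, with the hypothesis $b>2^{-s}$ entering as the inequality $2^{-s}b^{-1}<1$ that tames the $\xi_{d+1}$-direction rescaling of $|\xi|$. You instead factor out the modulation, subtract the value $\mu_s^l(y)$ using $\int\psi_a(z)e^{-iN\cdot z}\,\mathrm{d}z = \hat\psi(-aN)=0$, and bound the difference by the fundamental theorem of calculus with $\|\partial_i\mu_s^l\|_\infty \le \|\mu_s\|_1\|\partial_i\varphi_{b,s}\|_\infty$; the hypothesis $b>2^{-s}$ enters to absorb the $x_{d+1}$-derivative term $\sim 2^{-(d+4)s}b^{-(d+4)}$ into the $x'$-derivative term $\sim 2^{-(d+3)s}b^{-(d+3)}$. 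These are essentially dual moves: the paper puts the derivative on $\hat\psi$ and the anisotropic scaling into a Fourier integral; you put the derivative on $\varphi_{b,s}$ and the anisotropic scaling into a sup-norm. Both yield the same $a\,2^{-(d+3)s}b^{-(d+3)}$, both treat the support claim equally casually (it really requires $a$ and $b$ small relative to $2^s$ and $1$, which is what holds in the application in Lemma~\ref{lem sep trees}). One small remark: your detour through assuming $\psi$ real-valued is unnecessary here --- with the Fourier convention the paper is implicitly using (visible from its own computation $|\hat\psi(a\xi-aN)|$ in this proof, namely $\hat f(\xi)=\int f(x)e^{i\xi\cdot x}\,\mathrm{d}x$), one has $\int\psi_a(z)e^{-iN\cdot z}\,\mathrm{d}z = \hat\psi(-aN)$ directly, without invoking conjugate symmetry. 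But this is harmless.
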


\begin{proof}
    We have that  
    \begin{align*}
        |\psi_{a} * e^{iN \cdot y} \mu_s^l(y)|
        &\lesssim \int |\hat \psi(a \xi - a N)| |\hat \varphi_{b,s}(\xi)| \, \mathrm{d}\xi\\
        &\lesssim a \int |\xi||\hat \varphi_{b,s}(\xi)| \, \mathrm{d}\xi\\
        &= a 2^{-(d+3)s} b^{-(d+3)} \int |(\xi', 2^{-s} b^{-1} \xi_{d+1})| |\hat \varphi(\xi)| \, \mathrm{d}\xi\,.
    \end{align*}
    The lemma now follows since $2^{-s}b^{-1} < 1$ and $\||\xi|\hat \varphi(\xi)\|_1 \lesssim 1$ for $\psi \in \Psi$, and since the left hand side of \eqref{eq musl osc bound} is clearly supported in $[-2^s, 2^s]^d \times [-2^{2s}, 2^{2s}]$. 
\end{proof}

\subsection{Main almost orthogonality estimate for separated trees}
\label{sub sec sep tree}

\begin{lemma}
    \label{lem sep trees}
    There exists $C > 0$ such that the following holds. 
    Let $\mathbf{T}_1, \mathbf{T}_2$ be a pair of $\Delta$-separated, normal trees. Then 
    $$
        \left\lvert \int T_{\mathbf{T}_1}^* g_1 \overline{T_{\mathbf{T}_2}^* g_2}\right\rvert \leq C \Delta^{-\frac{1}{10d}} \| W_{\mathbf{T}_1} g_1 \|_{L^2(I(\mathbf{T}_1) \cap I(\mathbf{T}_2))}  \| W_{\mathbf{T}_2} g_2 \|_{L^2(I(\mathbf{T}_1) \cap I(\mathbf{T}_2))} 
    $$
    for certain operators $W_{\mathbf{T}_i}$ depending only on $\mathbf{T}_i$ with 
    $$
        \|W_{\mathbf{T}_i}\|_{2 \to 2} \leq 1\,.
    $$
\end{lemma}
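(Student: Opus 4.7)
The proof extends Corollary \ref{cor sep trees baby} from the case of common spatial tops to general $\Delta$-separated normal trees by decomposing $\mathbf{T}_2$ relative to $I(\mathbf{T}_1)$. I would begin with reductions. By normality and the support of $\mu_s$, each $T_{\mathbf{T}_i}^* g_i$ is essentially supported in $I(\mathbf{T}_i)$, so the inner product is zero modulo boundary terms unless $I(\mathbf{T}_1) \cap I(\mathbf{T}_2) \neq \emptyset$. By dyadic nesting and symmetry I would assume $I(\mathbf{T}_1) \subseteq I(\mathbf{T}_2)$, so $I(\mathbf{T}_1) \cap I(\mathbf{T}_2) = I(\mathbf{T}_1)$; let $s_0$ denote the scale of $I(\mathbf{T}_1)$. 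I would also assume $\Delta$ is larger than a constant depending only on $d$, since otherwise the estimate is trivial from the $L^2$-bound for generalized tree operators (Corollary \ref{cor single tree}), with $W_{\mathbf{T}_i}$ taken as spatial cutoffs to $I(\mathbf{T}_1)$.

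Next I would partition $\mathbf{T}_2 = \mathbf{T}_2^\flat \cup \mathbf{T}_2^\sharp \cup \mathbf{T}_2^{\mathrm{out}}$, where $\mathbf{T}_2^\flat := \{p_2 \in \mathbf{T}_2 : I(p_2) \subseteq I(\mathbf{T}_1)\}$, $\mathbf{T}_2^\sharp := \{p_2 \in \mathbf{T}_2 : I(\mathbf{T}_1) \subsetneq I(p_2)\}$, and $\mathbf{T}_2^{\mathrm{out}}$ consists of the remaining tiles, whose spatial cubes are disjoint from $I(\mathbf{T}_1)$. The contribution of $\mathbf{T}_2^{\mathrm{out}}$ comes only from the convolution tail $3I(p_2) \cap I(\mathbf{T}_1)$ and is dominated by boundary maximal-function expressions that I would absorb into $W_{\mathbf{T}_i}$. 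The collection $\mathbf{T}_2^\flat$ is convex and can be given top $(I(\mathbf{T}_1), \omega^*)$, where $\omega^*$ is the unique scale-$s_0$ dyadic frequency cube containing $\omega(\mathbf{T}_2)$. The $\Delta$-separation forces $\omega^* \neq \omega(\mathbf{T}_1)$: otherwise $\omega(\mathbf{T}_2) \subseteq \omega(\mathbf{T}_1) \subseteq \omega(p_1)$ for every $p_1 \in \mathbf{T}_1$, contradicting $\Delta\omega(p_1) \cap \omega(\mathbf{T}_2) = \emptyset$. Comparing centers, and using that tiles in $\mathbf{T}_1 \cup \mathbf{T}_2^\flat$ have spatial scale at most $s_0$ (so that the ``leakage'' of $\omega^*$ past $\omega(\mathbf{T}_2)$ is at most $2^{-s_0}$ and is absorbable for $\Delta$ large), one sees that $(\mathbf{T}_1, \mathbf{T}_2^\flat)$ is $(\Delta/2)$-separated in the sense of the common-top hypothesis. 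Corollary \ref{cor sep trees baby} then handles this piece with the stated decay.

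For the large-scale part $\mathbf{T}_2^\sharp$, every tile has scale $s(p_2) > s_0$, so the kernel $\mu_{s(p_2)}$ is smooth relative to $I(\mathbf{T}_1)$. I would rewrite the cross-term as $\sum_{p_2 \in \mathbf{T}_2^\sharp} \langle T_{p_2} T_{\mathbf{T}_1}^* g_1, g_2 \rangle$ and use the frequency localization of $T_{\mathbf{T}_1}^* g_1$ near $N(\mathbf{T}_1)$ from Lemma \ref{lem T* proj} to replace $T_{\mathbf{T}_1}^* g_1$ by its convolution with a Schwartz bump $\psi_a$ chosen from the class $\Psi$. The oscillatory integral estimates of Subsection \ref{sub sec aux sep tree} now apply: the $\Delta$-separation forces $|N(x) - N(\mathbf{T}_1)| \gtrsim \Delta\, 2^{-s_0}$ for $x \in E(p_2)$, so Lemmas \ref{lem sep tree aux} and \ref{lem sep tree aux 2} produce pointwise bounds for the convolution of $e^{i(N(x) - N(\mathbf{T}_1)) \cdot y}\mu_{s(p_2)}$ (and its smoothed counterpart) against $\psi_a$, with a gain factor $1/(ab|N|) \lesssim \Delta^{-1}$. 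The remaining mass is absorbed by the maximal operators $M^{\chi,1}, M^{\chi,2}$ and a parabolic maximal average, all $L^2$-bounded; summing over $p_2 \in \mathbf{T}_2^\sharp$ and scales yields a local $L^2$-estimate on $I(\mathbf{T}_1)$ with the required $\Delta$-decay. The operators $W_{\mathbf{T}_i}$ are defined to be the collection of these maximal operators composed with a cutoff to $I(\mathbf{T}_i)$, and $\|W_{\mathbf{T}_i}\|_{2 \to 2} \leq 1$ after absorbing absolute constants into $C$.

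The main obstacle is precisely the large-scale part $\mathbf{T}_2^\sharp$: the mismatch of spatial scales rules out the clean frequency-localization argument that sufficed in Corollary \ref{cor sep trees baby}, and integration by parts against a paraboloidal kernel carrying a modulation adapted to the paraboloid is delicate because the modulation phase and the paraboloid constraint interact. The oscillatory integral bounds of Subsection \ref{sub sec aux sep tree} are tailored to this difficulty and serve as the degree-one replacement for the simpler partial integration arguments available in Fefferman's original proof. The final exponent $1/(10d)$ emerges from balancing the parameters $a, b$ in those bounds against the frequency cutoff scale supplied by Lemma \ref{lem T* proj} and the logarithmic number of scales $s(p_2) > s_0$ that must be summed, while simultaneously ensuring that the controlling terms remain local to $I(\mathbf{T}_1) \cap I(\mathbf{T}_2)$.
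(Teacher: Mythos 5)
Your high-level strategy is the right one and closely mirrors the paper's: reduce (modulo absolute constants folded into $W_{\mathbf{T}_i}$) to large $\Delta$, split the larger tree according to whether its tiles live below, at, or above the scale of the smaller tree, handle the small-scale piece by the common-top Corollary~\ref{cor sep trees baby}, and handle the large-scale piece by combining the frequency localization of Lemma~\ref{lem T* proj} with the oscillatory integral estimates of Subsection~\ref{sub sec aux sep tree}. However, there are several substantive gaps that prevent your sketch from closing.

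First, and most importantly, you omit the kernel decomposition that the paper performs \emph{before} splitting the trees. The paper first writes $T_{\mathbf{T}_1} = T_{\mathbf{T}_1}^1 + T_{\mathbf{T}_1}^2 + \mathcal{E}_{\mathbf{T}_1}$, where $T^1$ retains the singular kernel $\mu_s$ and the $N(x)$-modulation but only at the $\log_2 \Delta$ largest scales, $T^2$ uses the mollified kernel $\mu_s^l = \mu_s * \varphi_{\Delta^{-\gamma},s}$ and a frozen modulation $N(\mathbf{T}_1)$ at the remaining scales, and $\mathcal{E}$ is a small error with $\|\mathcal{E}\|_{2\to2} \lesssim \Delta^{-\gamma/2}$. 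This step is indispensable: Lemma~\ref{lem sep tree aux 2} only applies to the mollified kernel $\mu_s * \varphi_{b,s}$ and is used precisely on $T^2$ at scales $s > \gamma \log_2 \Delta$, while Lemma~\ref{lem sep tree aux} with the $1/(ab|N|)$ gain only applies to a logarithmic number of scales. Your phrase ``and its smoothed counterpart'' hints at this, but you do not introduce the smoothing, do not explain how to produce the error estimate, and do not address how to sum over the infinitely many small scales without the smoothed kernel.

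Second, your scale threshold is wrong. You split $\mathbf{T}_2$ according to whether $I(p_2)$ is contained in, contains, or is disjoint from $I(\mathbf{T}_1)$, i.e. using the threshold $s_0 = s(I(\mathbf{T}_1))$. The paper instead normalizes the \emph{minimal tile scale} of the smaller tree to $0$ and then uses the threshold $s(p) < -1$ together with $3I(p) \subset I(\mathbf{T}_2)$ (in the paper's labeling). These two thresholds differ by the height of the smaller tree, which is unbounded. The paper's choice is exactly what makes (a) the remainder cross-term $\langle T_{\mathbf{T}_1'''}^* g_1, T_{\mathbf{T}_2}^* g_2\rangle$ \emph{vanish identically} (tiles in $\mathbf{T}_1'''$ that touch $I$ have scale at most $s(p') - 2$ for every $p' \in \mathbf{T}_2$, and normality of $\mathbf{T}_2$ then forces disjoint supports), and (b) the small-scale piece $\mathbf{T}_1'$ be \emph{normal} with top $I$ (because $3I(p) \subset I$ is imposed). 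With your threshold, the remainder piece does not vanish, and the ``boundary maximal-function'' terms you propose to absorb into $W_{\mathbf{T}_i}$ carry no $\Delta$-decay whatsoever, which defeats the estimate. Likewise your $\mathbf{T}_2^\flat$ (defined by $I(p_2) \subseteq I(\mathbf{T}_1)$) is not normal relative to $I(\mathbf{T}_1)$, so Corollary~\ref{cor sep trees baby} does not directly apply to it.

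Finally, for the large-scale piece the paper is more careful than your sketch: it introduces the high-pass projection $\Pi_2 = \Pi_{\Delta^\nu, N(\mathbf{T}_2)}$, writes $\langle T_{\mathbf{T}_1''}^{1*} g_1, T_{\mathbf{T}_2}^* g_2 \rangle$ as a term where $T_{\mathbf{T}_1''}^1$ is composed with $(1-\Pi_2)$ (this is where the oscillatory integral lemmas apply, because $(1-\Pi_2)$ is convolution with a modulated bump $\psi_{\Delta^{-\nu}}$), plus a term where $\Pi_2$ falls on $T_{\mathbf{T}_2}^*$ (this is controlled directly by Lemma~\ref{lem T* proj}). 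Your proposal conflates these into a single replacement of $T_{\mathbf{T}_1}^* g_1$ by a bump convolution, which loses the term that must be controlled by Lemma~\ref{lem T* proj} and obscures why only $\log_2\Delta$ scales are summed there. The observation that $\Delta$-separation gives $|N(x) - N(\mathbf{T}_1)| \gtrsim \Delta$ is correct in spirit, but it requires passing through a minimal-scale tile of the smaller tree rather than through $I(\mathbf{T}_1)$ itself.
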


\begin{proof}
    Since the trees are normal, the left hand side vanishes if $I(\mathbf{T}_1) \cap I(\mathbf{T}_2) = \emptyset$. Thus we may assume without loss of generality that $I := I(\mathbf{T}_2) \subset I(\mathbf{T}_1)$.
    Note also that we can always assume that $\Delta$ is sufficiently large, by adding $|T_{\mathbf{T}_i}^*|$ to $W_{\mathbf{T}_i}$. Finally, we assume that the minimal scale of a tile in $\mathbf{T}_2$ is $0$, by scaling.

    We fix $\gamma := \frac{2(d+1)}{(2d+1)(2d+7)}> \frac{2}{10d} > 0$ and $\varphi \in \Psi$ and define $\mu_s^{l} := \mu_s * \varphi_{\Delta^{-\gamma},s}$, where 
    $$
        \varphi_{\Delta^{-\gamma},s}(x) = 2^{-(d+2)s}\Delta^{\gamma(d+2)} \varphi( 2^{-s} \Delta^\gamma x', 2^{-2s} \Delta^{2\gamma} x_{d+1})\,.
    $$
    Then we decompose $T_{\mathbf{T}_1}$ into a large scales part, a smoothened small scales part and an error term:
    \begin{align*}
        T_{\mathbf{T}_1} f &= \sum_{\substack{s \in \sigma(x)\\ s> \overline{\sigma}(x) -\log_2(\Delta)}} \int f(x-y) e^{iN(x)\cdot y} \, \mathrm{d}\mu_s(y)\\
        &\quad+ \sum_{\substack{s \in \sigma(x)\\ s \leq \overline{\sigma}(x) - \log_2(\Delta)}} \int f(x-y) e^{iN(\mathbf{T}_1)\cdot y} \, \mathrm{d}\mu_s^{l}(y)\\
        &\quad+ \mathcal{E}_{\mathbf{T}_1}(f)\\
        &=: T_{\mathbf{T}_1}^{1} f + T_{\mathbf{T}_1}^{2} f + \mathcal{E}_{\mathbf{T}_1}(f)\,.
    \end{align*}
    Notice that $\mu_s, \lambda_s$ are supported in $\delta_s(B(0,1/3))$, so for $\Delta$ sufficiently large the measures $\mu_s^l$ and $\lambda_s^l$ also satisfy conditions i) to v). Thus both $T_{\mathbf{T}_1}^1$ and $T_{\mathbf{T}_1}^2$ are generalized tree operators.
    
    We first show that the error term $\mathcal{E}_{\mathbf{T}_1}$ is small. We have
    \begin{align}
        |\mathcal{E}_{\mathbf{T}_1}(f)| &\leq 
        \left\lvert\sum_{\substack{s \in \sigma(x)\\ s \leq \overline{\sigma}(x) - \log_2(\Delta)}} \int f(x-y) (e^{iN(x)\cdot y} - e^{iN(\mathbf{T}_1)\cdot y}) \, \mathrm{d}\mu_s(y)\right\rvert\nonumber\\
        &\quad+ \left\lvert\sum_{\substack{s \in \sigma(x)\\ s \leq \overline{\sigma}(x) - \log_2(\Delta)}} \int f(x-y) e^{iN(\mathbf{T}_1)\cdot y} \, \mathrm{d}(\mu_s - \mu_s^l)(y)\right\rvert\nonumber\\
        &\lesssim \sum_{\substack{s \in \sigma(x)\\ s \leq \overline{\sigma}(x) - \log_2(\Delta)}} 2^{s - \overline{\sigma}(x)}\int |f(x-y)|  \, \mathrm{d}|\mu_s|(y)\nonumber\\
        &\quad+ \sup_{\underline{s} < \overline{s}} \left\lvert \sum_{\substack{\underline{s} < s \leq \overline{s}\\ s \in J}} (\mu_s - \mu_s^l) * (f(y) e^{iN(\mathbf{T}_1)\cdot y}) \right\rvert\,.\nonumber
    \end{align}
    Here $J = J(\mathbf{T}_1)$ is as in Subsection \ref{sub sec intro tree}.
    The first term is bounded by $\Delta^{-1} M^{\lambda} f(x)$. The second term is the maximally truncated singular integral associated to the single scale operators $\mathbf{1}_J(s)(\mu_s - \mu_s^l)$, applied to $e^{iN(\mathbf{T}_1)\cdot y}f(y)$. Since 
    $$
        |\hat \mu_s(\xi) - \hat \mu_s^l(\xi)| \leq \min\{|\xi|^{-1}, |\Delta^{-\gamma}\xi|\} \,,
    $$
    this operator has norm $\lesssim \Delta^{-\gamma/2}$ on $L^2$. Thus we have 
    $$
        \|\mathcal{E}_{\mathbf{T}_1}(f)\|_2 \lesssim \Delta^{-\gamma/2} \|f\|_2 \leq \Delta^{-\frac{1}{10d}}\|f\|_2\,.
    $$
    
    It now remains to bound
    $\langle T_{\mathbf{T}_1}^{1*} g_1 ,T_{\mathbf{T}_2}^* g_2\rangle + \langle T_{\mathbf{T}_1}^{2*} g_1 ,T_{\mathbf{T}_2}^* g_2\rangle$.
    We decompose $\mathbf{T}_1 = \mathbf{T}_1' \cup \mathbf{T}_1'' \cup \mathbf{T}_1'''$,
    where 
    $$
        \mathbf{T}_1' = \{p \in \mathbf{T}_1 \, : \, s(p) < - 1\,, 3I(p) \subset I\}\,,
    $$
    $$
        \mathbf{T}_1'' = \{p \in \mathbf{T}_1 \, : \, s(p) \geq  - 1,\,3I(p) \cap I \neq \emptyset\}
    $$
    and $\mathbf{T}_1''' = \mathbf{T}_1 \setminus (\mathbf{T}_1' \cup \mathbf{T}_1'')$. 
    Then $\mathbf{T}_1'$ is a normal tree with top $I$. For all $p \in \mathbf{T}_1''', p' \in \mathbf{T}_2$ we have $2I(p) \cap (2 + \frac{1}{4})I(p') = \emptyset$, this follows from normality of $\mathbf{T}_2$ and the fact that all tiles in $\mathbf{T}_1'''$ with $3I(p) \cap I \neq \emptyset$ have scale at most $s(p') - 2$. Thus $\langle T_{\mathbf{T}_1'''}^* g_1, T_{\mathbf{T}_2}^*g_2\rangle = 0$. 
    
    According to the decomposition of $\mathbf{T}_1$ we split $T_{\mathbf{T}_1}^1 = T_{\mathbf{T}_1'}^1+ T_{\mathbf{T}_1''}^1 + T_{\mathbf{T}_1'''}^1$, and similarly for $T_{\mathbf{T}_1}^2$. We still have $\langle T_{\mathbf{T}_1'''}^{1*} g_1, T_{\mathbf{T}_2}^*g_2\rangle = 0$, thus
    \begin{equation}
        \label{eq T1 dec}
        \langle T_{\mathbf{T}_1}^{1*} g_1, T_{\mathbf{T}_2}^* g_2\rangle = \langle T_{\mathbf{T}_1'}^{1*} g_1, T_{\mathbf{T}_2}^* g_2\rangle + \langle T_{\mathbf{T}_1''}^{1*} g_1, T_{\mathbf{T}_2}^* g_2\rangle\,.
    \end{equation}
    For the first summand in \eqref{eq T1 dec} we obtain with Corollary \ref{cor sep trees baby}
    \begin{equation}
        \label{eq bound t1 1}
        |\langle T_{\mathbf{T}_1'}^{1*} g_1, T_{\mathbf{T}_2}^* g_2\rangle| \lesssim \Delta^{-\frac{1}{10d}} \|g_1\|_{L^2(I)}\|g_2\|_{L^2(I)}\,.
    \end{equation}
    For the second summand in \eqref{eq T1 dec} we put $\nu = \frac{d+1}{2d+1}$ and choose $\Pi_2 = \Pi_{\Delta^\nu, N(\mathbf{T}_2)}$ for some function $\psi \in \Psi$. If $\Delta$ is sufficiently large then $\Pi_2 T_{p'}^* g_2$ is supported in $(2 + \frac{1}{4})I(p')$ for each tile $p' \in \mathbf{T}_2$, thus we also have $\langle T_{\mathbf{T}_1'''}^{1*} g_1, \Pi_2 T_{\mathbf{T}_2}^*g_2\rangle = 0$. Using this, we expand
    \begin{equation}
        \label{eq T1' dec}
        \langle T_{\mathbf{T}_1''}^{1*} g_1, T_{\mathbf{T}_2}^* g_2\rangle = \langle g_1, T_{\mathbf{T}_1''}^{1}  (1 - \Pi_2)T_{\mathbf{T}_2}^{*} g_2\rangle + \langle T_{\mathbf{T}_1}^{1*}g_1 - T_{\mathbf{T}_1'}^{1*} g_1, \Pi_2 T_{\mathbf{T}_2}^* g_2\rangle\,.
    \end{equation}
    The second term in \eqref{eq T1' dec} is by Cauchy-Schwarz, Lemma \ref{lem T* proj} and $L^2$-boundedness of $T_{\mathbf{T}_1'}^{1*}=T_{\mathbf{T}_1'}^{1*} \mathbf{1}_{I(\mathbf{T}_2)}$ bounded by 
    \begin{equation}
        \label{eq bound t1 2}
         \Delta^{-\frac{1}{10d}} \||T_{\mathbf{T}_1}^{1*}g_1|+|g_1|\|_{L^2(I)} \|g_2\|_{L^2(I)}\,.
    \end{equation}
    For the first term in \eqref{eq T1' dec} we may assume that $N(\mathbf{T}_2) = 0$. Then we have
    $$
        T_{\mathbf{T}_1''}^1(1 - \Pi_2) f (x) = \sum_{\substack{s \in \sigma'(x)\\ s> \overline{\sigma}(x) -\log_2(\Delta)}}  (e^{iN(x) \cdot y} \mu_s(y))*\psi_{\Delta^{-\nu}} *f (x)\,,
    $$
    where $\sigma'(x) = \{s(p) \, : \,x \in E(p), p \in \mathbf{T}_1''\}$.
    Using the estimate for the convolution kernels proven in Lemma \ref{lem sep tree aux} with $a = \Delta^{-\nu}$, $b = \Delta^{-1/2}$, we obtain with the notation defined at \eqref{eq chi def}
    \begin{multline*}
        |(e^{iN(x) \cdot y} \mu_s(y))*\psi_{\Delta^{-\nu}} *f (x)| \\
        \lesssim (\Delta^{-1/2} \chi_{s, \Delta^{-\nu}, \Delta^{-1/2}} + \Delta^{\nu - 1/2} \chi_{s, \Delta^{-\nu}})*|f|(x)\,.
    \end{multline*}
    Passing the convolutions to the other side in the inner product, we find that the first term in \eqref{eq T1' dec} is bounded by 
    \begin{multline*}
        \log(\Delta) \Delta^{\nu-1/2} \langle M^{1,\chi}g_1 + M^{2,\chi}g_1, |T_{\mathbf{T}_2}^* g_2|\rangle\\\lesssim \Delta^{-\frac{1}{10d}} \| M^{1,\chi}g_1 + M^{2,\chi}g_1\|_{L^2(I)} \|g_2\|_{L^2(I)}
    \end{multline*}
    where $M^{1,\chi}$, $M^{2,\chi}$ are defined in \eqref{eq M1 def} and \eqref{eq M2 def}. Since $M^{1,\chi}$ and $M^{2,\chi}$ are bounded on $L^2$, this gives the desired estimate for $T_{\mathbf{T}_1}^1$. 

    Now we turn to $\langle T_{\mathbf{T}_1}^{2*} g_1, T_{\mathbf{T}_2}^* g_2\rangle $. As above, we have
    \begin{equation}
        \label{eq T2 dec}
        \langle T_{\mathbf{T}_1}^{2*} g_1, T_{\mathbf{T}_2}^* g_2\rangle = \langle T_{\mathbf{T}_1'}^{2*} g_1, T_{\mathbf{T}_2}^* g_2\rangle + \langle T_{\mathbf{T}_1''}^{2*} g_1, T_{\mathbf{T}_2}^* g_2\rangle\,.
    \end{equation}
    For the first term in \eqref{eq T2 dec} we have again from Corollary \ref{cor sep trees baby} that 
    \begin{equation}
        \label{eq bound t2 1}
        |\langle T_{\mathbf{T}_1'}^{2*} g_1, T_{\mathbf{T}_2}^* g_2\rangle| \lesssim \Delta^{-\frac{1}{10d}} \|g_1\|_{L^2(I)}\|g_2\|_{L^2(I)}\,.
    \end{equation}
    For the second term in \eqref{eq T2 dec} we split as before
    \begin{equation}
    \label{eq T2' dec}
        \langle T_{\mathbf{T}_1''}^{2*} g_1, T_{\mathbf{T}_2}^* g_2\rangle = \langle g_1, T_{\mathbf{T}_1''}^{2}  (1 - \Pi_2)T_{\mathbf{T}_2}^{*} g_2\rangle + \langle T_{\mathbf{T}_1}^{2*}g_1 - T_{\mathbf{T}_1'}^{2*} g_1, \Pi_2 T_{\mathbf{T}_2}^* g_2\rangle\,.
    \end{equation}
    The second term in \eqref{eq T2' dec} is by Cauchy-Schwarz and Lemma \ref{lem T* proj} bounded by 
    \begin{equation}
        \label{eq bound t2 2}
         \Delta^{-\frac{1}{10d}} \||T_{\mathbf{T}_1}^{2*}g_1|+|g_1|\|_{L^2(I)} \|g_2\|_{L^2(I)}\,.
    \end{equation}
    For the first term in \eqref{eq T2' dec}, we  assume again that $N(\mathbf{T}_2) = 0$. 
    Then 
    \begin{align*}
        T_{\mathbf{T}_1''}^2(1 - \Pi_2) f (x) = \sum_{\substack{s \in \sigma'(x)\\ s \leq \overline{\sigma}(x) - \log_2(\Delta)}} \psi_{\Delta^{-\nu}} * (e^{iN(\mathbf{T}_1) \cdot y} \mu_s^l(y)) * f(x)\,.
    \end{align*}
    By our assumptions on $\Psi$, we may then further assume that we have $\hat \psi(\Delta^{-\nu}N(\mathbf{T}_1)) = 0$, since for $\Delta$ sufficiently large $|\Delta^{-\nu} N(\mathbf{T}_1)| \gtrsim \Delta^{1-\nu} \geq 1$. 
    By Lemma \ref{lem sep tree aux 2} with $a = \Delta^{-\nu}$, $b = \Delta^{-\gamma}$ we have
    \begin{multline*}
        \sum_{s > \gamma \log_2(\Delta)} |\psi_{\Delta^{-\nu}} * (e^{iN(\mathbf{T}_1)\cdot y}\mu_s^l(y)) * f(x)| \lesssim \Delta^{(d+3)\gamma - \nu} \Phi * |f|(x)\,,
    \end{multline*}
    where $\Phi(y) = \sum_{s \geq 0} 2^{-s(d+3)} \mathbf{1}_{[-2^s, 2^s]^d \times [-2^{2s}, 2^{2s}]}(y)$.
    By Lemma \ref{lem sep tree aux} with $a = \Delta^{-\nu}$, $b = \Delta^{-1/2}$ we have for $-1 \leq s \leq \gamma \log(\Delta)$ that 
    \begin{multline*}
        |\psi_{\Delta^{-\nu}} * (e^{iN(\mathbf{T}_1 \cdot y}\mu_s^l(y)) * f(x)|\\ \lesssim (\Delta^{-1/2} \chi_{s, \Delta^{-\gamma}, \Delta^{-1/2}} + \Delta^{\nu - 1/2} \chi_{s, \Delta^{-\gamma}})*M|f|(x)\,.
    \end{multline*}
    Similarly as for $T_{\mathbf{T}_1}^1$, we obtain from these estimates that the first term in \eqref{eq T2' dec} is bounded by 
    $$
        \Delta^{-\frac{1}{10d}} \|Mg_1 + M^{1,\chi}g_1 + M^{2,\chi} g_1\|_{L^2(I)}\|g_2\|_{L^2(I)}\,.
    $$
    This completes the proof.
\end{proof}

\subsection{Completing the argument for forests}
\label{sub sec comp for}

Here we use Lemma \ref{lem sep trees} to complete the proof of Proposition \ref{prop forest}.
We follow the presentation in \cite{ZK2021}.

A row is a union of normal trees with pairwise disjoint top cubes.

\begin{lemma}
    \label{lem rows}
    There exists $C > 0$ such that the following holds.
    Let $\mathbf{R}_1, \mathbf{R}_2$ be two rows such that the trees in $\mathbf{R}_1$ are $\Delta$-separated from the trees in $\mathbf{R}_2$. Then 
    $$
        \left\lvert \int T_{\mathbf{R}_1}^* g_1 \overline{T_{\mathbf{R}_2}^* g_2} \right\rvert \leq C \Delta^{-\frac{1}{10d}} \|g_1\|_2 \|g_2\|_2\,.
    $$
\end{lemma}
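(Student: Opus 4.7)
Write $\mathbf{R}_1 = \bigcup_j \mathbf{T}_{1,j}$ and $\mathbf{R}_2 = \bigcup_k \mathbf{T}_{2,k}$, so that $T_{\mathbf{R}_i}^* = \sum_\ell T_{\mathbf{T}_{i,\ell}}^*$. The plan is to reduce matters to Lemma \ref{lem sep trees} applied pairwise, combined with two disjointness facts: first, the top cubes inside each row are disjoint by the definition of a row; second, since each tree is normal, the adjoint tree operator $T_{\mathbf{T}}^*$ is supported in $I(\mathbf{T})$ and depends only on the values of its input on $I(\mathbf{T})$. Let $g_{i,\ell} := \mathbf{1}_{I(\mathbf{T}_{i,\ell})} g_i$, so that $T_{\mathbf{T}_{i,\ell}}^* g_i = T_{\mathbf{T}_{i,\ell}}^* g_{i,\ell}$, and the supports are pairwise disjoint as $\ell$ varies.

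Expanding the bilinear form,
\begin{equation*}
    \int T_{\mathbf{R}_1}^* g_1 \overline{T_{\mathbf{R}_2}^* g_2} = \sum_{j,k} \int T_{\mathbf{T}_{1,j}}^* g_{1,j} \, \overline{T_{\mathbf{T}_{2,k}}^* g_{2,k}}\,.
\end{equation*}
By the separation hypothesis, each pair $(\mathbf{T}_{1,j}, \mathbf{T}_{2,k})$ is $\Delta$-separated. Applying Lemma \ref{lem sep trees} to each pair (the term is zero whenever $I(\mathbf{T}_{1,j}) \cap I(\mathbf{T}_{2,k}) = \emptyset$) yields
\begin{equation*}
    \left|\int T_{\mathbf{R}_1}^* g_1 \overline{T_{\mathbf{R}_2}^* g_2}\right| \lesssim \Delta^{-\frac{1}{10d}} \sum_{j,k} \|W_{\mathbf{T}_{1,j}} g_{1,j}\|_{L^2(I_{j,k})} \|W_{\mathbf{T}_{2,k}} g_{2,k}\|_{L^2(I_{j,k})}\,,
\end{equation*}
where $I_{j,k} := I(\mathbf{T}_{1,j}) \cap I(\mathbf{T}_{2,k})$.

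Now apply Cauchy-Schwarz on the double sum. For the first factor, for fixed $j$ the sets $I_{j,k}$ are pairwise disjoint in $k$ because the top cubes of $\mathbf{R}_2$ are pairwise disjoint; hence
\begin{equation*}
    \sum_k \|W_{\mathbf{T}_{1,j}} g_{1,j}\|_{L^2(I_{j,k})}^2 \leq \|W_{\mathbf{T}_{1,j}} g_{1,j}\|_2^2 \leq \|g_{1,j}\|_2^2\,,
\end{equation*}
using $\|W_{\mathbf{T}_{1,j}}\|_{2\to2} \leq 1$. Summing over $j$ and using that the $g_{1,j}$ have pairwise disjoint supports inside $\mathbf{R}_1$, this is bounded by $\|g_1\|_2^2$. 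The second factor is treated symmetrically, giving $\|g_2\|_2^2$. Combining yields the claimed estimate.

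The only point requiring genuine care is the localization step $T_{\mathbf{T}}^* g = T_{\mathbf{T}}^* (\mathbf{1}_{I(\mathbf{T})} g)$, which is a direct consequence of the normality condition $3I(p) \subset I(\mathbf{T})$ for all $p \in \mathbf{T}$, together with the fact that $\mu_s$ is supported in $\delta_s(B(0,1/2))$; this ensures that $T_p^* g(y)$ only samples $g$ at points in (a small neighborhood of) $I(p) \subset I(\mathbf{T})$. With this observation the argument is essentially just bookkeeping, and I expect no substantial obstacle beyond carefully tracking which cube each function is restricted to.
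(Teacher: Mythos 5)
Your proof is correct and follows essentially the same route as the paper: expand over pairs of trees, apply Lemma \ref{lem sep trees} to each pair, then Cauchy-Schwarz the double sum using disjointness of the top cubes within each row and $\|W_{\mathbf{T}}\|_{2\to2}\leq 1$. One small correction to your final paragraph: the input localization $T_{\mathbf{T}}^* g = T_{\mathbf{T}}^*(\mathbf{1}_{I(\mathbf{T})}g)$ already follows from $E(p)\subset I(p)\subset I(\mathbf{T})$ for all $p\in\mathbf{T}$ and needs neither normality nor the support of $\mu_s$; what normality (together with $\supp\mu_s\subset\delta_s(B(0,1/2))$) gives you is the dual fact that $T_{\mathbf{T}}^* g$ is \emph{supported} in $I(\mathbf{T})$, which is what lets Lemma \ref{lem sep trees} restrict the $L^2$ norms to $I(\mathbf{T}_1)\cap I(\mathbf{T}_2)$ and makes cross terms vanish when the tops are disjoint.
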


\begin{proof}
    We have by Lemma \ref{lem sep trees}, with $W_{\mathbf{T}_i}$ as defined there:
    \begin{multline*}
         \left\lvert \int T_{\mathbf{R}_1}^* g_1 \overline{T_{\mathbf{R}_2}^* g_2} \right\rvert 
         \leq \sum_{\mathbf{T}_1 \in \mathbf{R}_1} \sum_{\mathbf{T}_2 \in \mathbf{R}_2}  \left\lvert \int T_{\mathbf{T}_1}^* \mathbf{1}_{I(\mathbf{T}_1)}g_1 \overline{T_{\mathbf{T}_2}^* \mathbf{1}_{I(\mathbf{T}_2)}g_2} \right\rvert\\
        \leq C \Delta^{-\frac{1}{10d}} \sum_{\mathbf{T}_1 \in \mathbf{R}_1} \sum_{\mathbf{T}_2 \in \mathbf{R}_2} \prod_{i = 1,2} \|W_{\mathbf{T}_i} \mathbf{1}_{I(\mathbf{T}_i)} g_i\|_{L^2(I(\mathbf{T}_1)\cap I(\mathbf{T}_2))}\,.
    \end{multline*}
    Using Cauchy-Schwarz, disjointness of the cubes $I(\mathbf{T}_i)$ for $\mathbf{T}_i \in \mathbf{R}_i$ and $\|W_{\mathbf{T}_i}\|_{2 \to 2} \lesssim 1$, we estimate:
    \begin{align*}
        &\leq C\Delta^{-\frac{1}{10d}} \prod_{i=1,2}\left(\sum_{\mathbf{T}_1 \in \mathbf{R}_2} \sum_{\mathbf{T}_2 \in \mathbf{R}_2} \|W_\mathbf{T_i} \mathbf{1}_{I(\mathbf{T}_i)} g_i\|^2_{L^2(I(\mathbf{T}_1) \cap I(\mathbf{T}_2))} \right)^{1/2}\\
        &\leq C\Delta^{-\frac{1}{10d}} \prod_{i=1,2}\left(\sum_{\mathbf{T}_i \in \mathbf{R}_i}  \|W_\mathbf{T_i} \mathbf{1}_{I(\mathbf{T}_i)} g_i\|^2_{L^2} \right)^{1/2}\\
        &\leq C \Delta^{-\frac{1}{10d}} \|g_1\|_2 \|g_2\|_2\,.\qedhere
    \end{align*}
\end{proof}

\begin{proof}[Proof of Proposition \ref{prop forest}]   
    Let $\mathbf{F}$ be an $n$-forest.
    By the overlap estimate \eqref{eq forest overlap}, we can decompose  $\mathbf{F}$ into $2^n \log(n+2)$ rows $\mathbf{R}_i$. Since $\mathbf{F}$ is an $n$-forest, the trees in different rows are $2^{10dn}$ separated and have density at most $2^{-n}$. Note also that separation of the rows implies that the sets $E(\mathbf{R}_i) := \cup_{p \in \mathbf{R}_i} E(p)$ are pairwise disjoint. 
    
    By orthogonality and Corollary \ref{cor single tree}, we have that
    \begin{equation*}
        \label{eq row est}
        \|T_{\mathbf{R}_i}\|_{2\to2} \lesssim_\varepsilon 2^{-\epsilon n}
    \end{equation*}
    for each $i$ and each $\varepsilon < \frac{1}{2} - \frac{1}{2(d+1)}$.  Using this and Lemma \ref{lem rows} yields
    \begin{multline}
    \label{eq final forest}
        \|T_\mathbf{F}^* g\|_2^2
        \leq \sum_{i,j} \left\lvert \int T_{\mathbf{R}_i}^* \mathbf{1}_{E(\mathbf{R}_i)} g T_{\mathbf{R}_j}^* \mathbf{1}_{E(\mathbf{R}_j)}g \right\rvert\\
        \lesssim (2^{-2\varepsilon n}+ 2^n \log(n+2) 2^{-\frac{1}{10d} 10 d n}) \sum_i \|\mathbf{1}_{E(\mathbf{R}_i)}g\|_2^2\\ \lesssim 2^{-2\varepsilon n} \|g\|_2^2\,.
    \end{multline}
    This completes the proof.
\end{proof}

\section{\texorpdfstring{$L^p$}{Lp}-bounds: Proof of Theorem \ref{main Lp}}
\label{sec Lp}

Theorem  \ref{main Lp} follows from interpolation and the estimate
\begin{equation}
    \label{eq RWT}
    |\langle \mathbf{1}_F, T_{\mathbf{P}_{\mathrm{ad}}} \mathbf{1}_G\rangle| \le C_p |F|^{1/p'} |G|^{1/p}
\end{equation}
for all $F, G \subset \R^{d+1}$ with $0 < |F|, |G| < \infty$ and all $p$ satisfying
\begin{equation}
    \label{eq p cond prop6}
    \frac{d^2 + 4d + 2}{(d+1)^2} < p < 2(d+1)\,.
\end{equation}

Before verifying \eqref{eq RWT} we record an $L^p$ estimate for antichains.

\begin{proposition}
    \label{prop antichain Lp}
    For each $1 < p < \infty$ there exists $\varepsilon = \varepsilon(d,p) > 0$ and $C > 0$ such that for every antichain $\mathbf{A}$ of density $\delta$
    $$
        \|T_{\mathbf{A}}\|_{p \to p} \leq C \delta^{-\varepsilon}\,. 
    $$
\end{proposition}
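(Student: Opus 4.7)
The crucial structural fact is that for any antichain $\mathbf{A}$ the sets $E(p)$, $p \in \mathbf{A}$, are pairwise disjoint: if $x \in E(p) \cap E(p')$ for distinct $p, p' \in \mathbf{A}$, then the dyadic cubes $I(p), I(p')$ would be nested (having a common point), say $I(p) \subset I(p')$ so $s(p) \leq s(p')$; the shared value $N(x) \in \omega(p) \cap \omega(p')$ would then force the smaller frequency cube $\omega(p')$ to lie inside $\omega(p)$, giving $p \leq p'$ and contradicting the antichain property. Consequently, pointwise
\[
|T_\mathbf{A} f(x)| \leq M^V f(x) := \sup_{s \in \mathbb{Z}}\ \sup_{N \in V}\ \Big|\int f(x-y)e^{iN\cdot y}\,d\mu_s(y)\Big|\,,
\]
and since $\delta \leq 1$ it suffices to prove that the ($\mathbf{A}$-independent) operator $M^V$ is bounded on $L^p$ for every $1 < p < \infty$; any such bound then yields the proposition with arbitrary $\varepsilon > 0$.

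To control $M^V$ I would use the kernel decomposition $\mu_s = \mu_s^l + \mu_s^h$ of Subsection \ref{sub sec dec antichain}, choosing the smoothening parameter to be an absolute constant (no $\delta$ is needed here). The smooth part $\mu_s^l$ satisfies $|\mu_s^l| \lesssim |I_s|^{-1}\mathbf{1}_{B_s}$ on a parabolic ball $B_s$ of scale $s$, so modulation is irrelevant for the absolute value and
\[
\sup_{s,N}|f*(e^{iN\cdot y}\mu_s^l)|(x) \lesssim M^{\mathrm{par}}f(x)\,,
\]
the parabolic Hardy--Littlewood maximal function, which is bounded on every $L^p$ with $p > 1$. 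The high-frequency part $\sup_{s,N}|f*(e^{iN\cdot y}\mu_s^h)|$ would be dominated by an $\ell^2$ square function generalizing the one in Lemma \ref{lem squaref}, indexed over all scales $s$ and over a fixed discretization (independent of $\delta$) of the modulation set $V$. By the Fourier decay of $\widehat{\mu_s^h}$ established via stationary phase in the proof of Lemma \ref{lem squaref}, the summands form a parabolic Littlewood--Paley system whose frequency supports cover $\R^{d+1}$ with bounded overlap; standard vector-valued Calderón--Zygmund theory then gives $L^p$-boundedness for all $1 < p < \infty$.

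The main technical obstacle is verifying the Hörmander-type cancellation estimate for the modulated high-frequency kernels $e^{iN\cdot y}\mu_s^h$, uniformly in $s$ and $N$, which is required to push the vector-valued Calderón--Zygmund argument below $p = 2$. This amounts to quantitative versions of the stationary-phase estimates in Lemma \ref{lem squaref}, combined with the vanishing $\int \mu_s^h = 0$ inherited from the cancellation of $K_s$, carefully summed over the modulation discretization; for $p > d/(d-1)$ one can alternatively bypass this difficulty by dominating $M^V$ pointwise by the maximal parabolic Radon average $\sup_s |f|*|\mu_s|$ and invoking the parabolic analogue of Stein's spherical maximal theorem, which is applicable since $d \geq 2$.
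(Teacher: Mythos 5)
Your starting observation — that the sets $E(p)$, $p \in \mathbf{A}$, are pairwise disjoint for an antichain — is correct and is also implicitly the first step of the paper's proof. But the detour you take afterwards is unnecessary and also misses the paper's main point.

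First, the unnecessary part: after reducing to $M^V$, you introduce the decomposition $\mu_s = \mu_s^l + \mu_s^h$ of Subsection \ref{sub sec dec antichain} and a square-function argument to bound $M^V$. But since $|e^{iN\cdot y}| = 1$, the modulation is irrelevant once you take absolute values for \emph{any} of the kernels, not only the smooth ones: directly $|f*(e^{iN\cdot y}\mu_s(y))(x)| \leq |f|*|\mu_s|(x) \lesssim |f|*\lambda_s(x)$, so $M^V f \lesssim \sup_s |f|*\lambda_s$ pointwise. No decomposition or vector-valued Calderón--Zygmund theory is needed. Moreover, $\sup_s |f|*\lambda_s$ is the \emph{lacunary} parabolic maximal average, which is bounded on $L^p$ for all $p > 1$ (Calderón's argument via Littlewood--Paley decomposition). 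The restriction $p > d/(d-1)$ in your alternative route comes from Stein's theorem for the \emph{continuous-parameter} spherical maximal function, not the lacunary one, so you are citing the wrong result; the simple pointwise bound already handles all $p > 1$.

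Second, and more importantly, your proof gives only a $\delta$-independent bound $\|T_\mathbf{A}\|_{p\to p} \leq C(p)$, while the paper obtains a bound that \emph{decays} in $\delta$ by interpolating this trivial bound against the $L^2$ estimate of Proposition \ref{prop antichain}, namely $\|T_\mathbf{A}\|_{2\to2} \lesssim \delta^{\varepsilon}$. As written, Proposition \ref{prop antichain Lp} asks for $\delta^{-\varepsilon}$, which your bound would trivially satisfy — but this is almost certainly a sign typo (compare Proposition \ref{prop antichain}, where the exponent is positive): in the proof of Theorem \ref{main Lp} one sums $O((n+1)^3)$ antichains of density $\leq 2^{-n}$ over $n \geq 0$, and that sum diverges without decay in $\delta$. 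The interpolation step is therefore essential, and your proposal omits it. The paper's actual proof is two lines: pointwise domination by the lacunary maximal average gives the trivial $L^p$ bound, and interpolation with Proposition \ref{prop antichain} gives a positive power of $\delta$.
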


\begin{proof}
    This follows from interpolation between Proposition \ref{prop antichain} and the trivial bound $\|T_\mathbf{A}\|_{p \to p} \leq C(p)$, which holds since $T_\mathbf{A}$ is dominated pointwise by the maximal average along the paraboloid.
\end{proof}

\begin{proof}[Proof of \eqref{eq RWT}]
    Suppose first that $p < 2$. Fix sets $F$ and $G$ with $0 < |F|, |G| < \infty$.
    We run the constructions in Section \ref{sec outline} and \ref{sec organization} as in the proof of weak $L^2$ bounds and define
    \[
        \bar F = \tilde F \cup \{M \mathbf{1}_G > C|G|/|F|\}
    \]
    where $\tilde F$ is the set from Proposition \ref{prop organization}. Choosing $C$ sufficiently large, we have $|\bar F| \le 3|F|/4$.

    We claim that for each forest $\mathbf{F}_{n,l}$ and for all $0 \leq \alpha < d^2/(2(d^2 + 4d + 2))$ there exists $\varepsilon > 0$ such that
    \begin{equation}
        \label{eq localized}
        \|\mathbf{1}_{F \setminus \bar F} T_{\mathbf{F}_{n,l}} \mathbf{1}_G\|_{2 \to 2} \lesssim (|G||F|^{-1})^\alpha 2^{-\varepsilon n}\,.
    \end{equation}
    Suppose the claim \eqref{eq localized} holds. Then 
    \begin{equation}
        \label{eq single forest loc}
        |\langle \mathbf{1}_{F \setminus \bar F}, T_{\mathbf{F_{n,l}}} \mathbf{1}_G \rangle| \lesssim 2^{-\epsilon n} |F|^{1/2 - \alpha} |G|^{1/2 + \alpha}.
    \end{equation}
    Using the decomposition of $\mathbf{P}_\mathrm{ad}$ into forests and antichains from Proposition \ref{prop organization}, using Proposition \ref{prop antichain Lp} for antichains and \eqref{eq single forest loc} for forests and summing in $n$ yields
    \[
        |\langle \mathbf{1}_{F \setminus \bar F}, T_{\mathbf{P_\mathrm{ad}}} \mathbf{1}_G \rangle| \lesssim |F|^{1/2 - \alpha} |G|^{1/2 + |\alpha|}.
    \]
    Finally, defining $F_0 = F$ and iteratively $F_{i+1} = \bar F_i$, we have $|F_i| \le (3/4)^i|F|$ and hence
    \begin{align}
        |\langle \mathbf{1}_{F}, T_{\mathbf{P_\mathrm{ad}}} \mathbf{1}_G \rangle| &= \sum_{i \ge 0} |\langle \mathbf{1}_{F_i \setminus F_{i+1}}, T_{\mathbf{P_\mathrm{ad}}} \mathbf{1}_G \rangle|\nonumber\\
        &\lesssim \sum_{i \ge 0} (3/4)^{(1/2 - \alpha)i} |F|^{1/2 - \alpha}|G|^{1/2 + \alpha}\nonumber\\
        &\lesssim |F|^{1/2 - \alpha} |G|^{1/2 + \alpha}. \label{eq iteration}
    \end{align}
    This completes the proof of \eqref{eq RWT}. This extrapolation argument is attributed to Bateman \cite{Bateman_2013} in \cite{ZK2021}.

    Now we verify the claim \eqref{eq localized}. Recall that by Lemma \ref{lem sparse} for each tree $\mathbf{T}$, all $p > 1+\frac{1}{d+1}$ and all $f,g$, there exists a sparse collection $\mathcal{S}$ with
    \begin{equation}
    \label{eq loc sparse}
        |\langle \mathbf{1}_{F \setminus \bar F} T_{*\mathbf{T}} \mathbf{1}_G f, g \rangle| \lesssim \sum_{Q \in \mathcal{S}} |Q| \langle \mathbf{1}_G f \rangle_{p,Q} \langle \mathbf{1}_{(F \setminus \bar F) \cap E(Q)} g \rangle_{p, 3Q}\,.
    \end{equation}
    If $3Q \cap (F \setminus \bar F) \neq \emptyset$, then $|Q \cap G| \lesssim |G||F|^{-1} |Q|$. Using this and Hölder's inequality, we obtain with a similar argument as in the proof of Corollary \ref{cor sparse} that for each $\varepsilon < \frac{1}{2} - \frac{1}{d+2}$ and each tree $\mathbf{T}$ of density at most $2^{-n}$
    $$
        \|\mathbf{1}_{F \setminus \bar F} T_{*\mathbf{T}}\mathbf{1}_G\|_{2 \to 2} \lesssim (|G||F|^{-1})^{\varepsilon} 2^{-\varepsilon n}\,.
    $$
    By the same argument, the same holds for $M_{\mathbf{T}}$ and hence for $T_{\mathbf{T}}$.
    By orthogonality, the same estimate holds for row operators $T_\mathbf{R}$. Finally, each forest $\mathbf{F}_{n,l}$ can be decomposed into at most $2^n \log(n+2)$ rows, so 
    $$
        \|\mathbf{1}_{F \setminus \bar F} T_{\mathbf{F}_{n,l}}\mathbf{1}_G\|_{2 \to 2} \lesssim 2^{n/2 - \varepsilon n} \log(n+2)^{1/2} (|G||F|^{-1})^{\varepsilon}\,.
    $$
    We obtain \eqref{eq localized} by taking a geometric average of this estimate and \eqref{eq final forest}.

    We turn to the case $p > 2$. We fix again $F, G$ and run the constructions from Section  \ref{sec outline} and \ref{sec organization} as in the proof of weak $L^2$ bounds. Let $\tilde F$ be the exceptional set from Proposition \ref{prop organization}, it depends only on $F$. Let
    \[
        \tilde G = G \cap \{M\mathbf{1}_{F} > C|F|/|G|\},
    \]
    where $C$ is chosen sufficiently large so that $|\tilde G| \le |G|/2$.
    We claim that for all $0 \leq \alpha < \frac{1}{2} - \frac{1}{2(d+1)}$
    \begin{equation}
        \label{eq localized 2}
        \| \mathbf{1}_{F \setminus \tilde F} \sum_{n \geq 0} \sum_{l = 1}^{(n+1)^2} T_{\mathbf{F}_{n,l}} \mathbf{1}_{G \setminus \tilde G}\|_{2 \to 2} \leq C_\alpha (|F||G|^{-1})^\alpha\,.
    \end{equation}
    Suppose that \eqref{eq localized 2} holds. As in the case $p < 2$, we conclude using the decomposition of $\mathbf{P}_{\mathrm{ad}}$ into antichains and forests from Proposition \ref{prop organization} and using Proposition \ref{prop antichain Lp} that 
    \[
        |\langle \mathbf{1}_{F \setminus \tilde F}, T_{\mathbf{P}_{\mathrm{ad}}} \mathbf{1}_{G \setminus \tilde G}\rangle| \lesssim |F|^{1/2 + \alpha} |G|^{1/2 - \alpha}.
    \]
    Iterating over $G$ and summing a geometric series as in \eqref{eq iteration}, and then iterating over $F$ in the same way gives the restricted weak type bounds \eqref{eq RWT}.

    It remains to verify the claim \eqref{eq localized 2}. If a tile $p$ contributes in \eqref{eq localized 2}, then $3I(p) \cap (G \setminus \tilde G) \ne \emptyset$. Using the definition of $\tilde G$ it follows that for every cube $Q \supset I(p)$
    \[
        |Q \cap F| \lesssim |Q| (|F||G|^{-1}).
    \]
    If $p$ gets assigned to a forest $\mathbf{F}_{n,l}$ in the tile organization in Section \ref{sec organization}, then $p \in \mathbf{P}_k$ for some $k \le n$. By the definition of $\mathbf{P}_k$ there then exists a dyadic cube $Q \in \mathbf{D}_k(F)$ with $I(p) \subset Q$. This cube satisfies
    \[
        2^{-k-1} \le |Q \cap F|/|Q| \lesssim |F| |G|^{-1}.
    \]
    Hence, we have
    \begin{equation*}
        \sum_{n \geq 0} \sum_{l = 1}^{(n+1)^2} T_{\mathbf{F}_{n,l}} \mathbf{1}_{G \setminus \tilde G}= \sum_{n \ge C + \log(|G||F|^{-1})} \sum_{l = 1}^{(n+1)^2} T_{\mathbf{F}_{n,l}} \mathbf{1}_{G \setminus \tilde G}.
    \end{equation*}
    Now the claim \eqref{eq localized 2} follows from Proposition \ref{prop forest}.
\end{proof}

\printbibliography
\end{document}